\pgfplotsset{compat=1.9}
\newcommand*{\changesizes}{%
	\let\Huge=\huge
	\let\huge=\LARGE
	\let\LARGE=\Large
	\let\Large=\large
	\let\large=\normalsize
	\let\normalsize=\small
	\let\small=\tiny
}
\newcommand*{\switchsize}{%
	\changesizes
	\normalsize}
\newtheorem{theorem}{Theorem}[section]
\newtheorem{corollary}[theorem]{Corollary}
\newtheorem{lemma}[theorem]{Lemma}
\newtheorem{proposition}[theorem]{Proposition}
\theoremstyle{definition}
\newtheorem{assumption}[theorem]{Assumption}
\newtheorem{comment}[theorem]{Comment}
\theoremstyle{remark}
\newtheorem{remark}[theorem]{Remark}
\renewcommand{\theta}{\vartheta}
\DeclareMathOperator{\supp}{supp}
\numberwithin{equation}{section}
\newcommand{\roundPrecision}{2}
\newcommand{\blu}[1]{{\color{black} #1}} 
\newcommand{\rosso}[1]{{\color{black} #1}} 
\long\def\NOTE#1{}
\begin{document}

\title[Interior Estimates for VEM]{Interior estimates for the Virtual Element Method}%

\author[S. Bertoluzza]{Silvia Bertoluzza}
\address{IMATI ``E. Magenes'', CNR, Pavia (Italy)}%
\email{silvia.bertoluzza@imati.cnr.it}%

\author[M. Pennacchio]{Micol Pennacchio}
\address{IMATI ``E. Magenes'', CNR, Pavia (Italy)}%
\email{micol.pennacchio@imati.cnr.it}%

\author[D. Prada]{Daniele Prada}
\address{IMATI ``E. Magenes'', CNR, Pavia (Italy)}%
\email{daniele.prada@imati.cnr.it}%

\date{\today}
\thanks{{This paper has been realized in the framework of ERC Project CHANGE, which has received funding from the European Research Council (ERC) under the European Union’s Horizon 2020 research and innovation programme (grant agreement No 694515), and was co-funded by the MIUR Progetti di Ricerca di Rilevante Interesse Nazionale (PRIN) Bando 2017 (grant 201744KLJL). The authors  are members of the Gruppo Nazionale Calcolo Scientifico-Istituto Nazionale di Alta Matematica (GNCS-INDAM)}}%
\subjclass{}%
\keywords{}%

\begin{abstract} We analyze the local accuracy of the virtual element method. More precisely, we prove an error bound similar to the one holding for the finite element method, namely, that the local $H^1$  error in a interior subdomain is bounded by a term behaving like the best approximation allowed by the local smoothness of the solution in a larger interior subdomain plus the global error measured in a negative norm. 
\end{abstract}
\maketitle

\newcommand{\polygon}{K}
\renewcommand{\K}{\polygon} 
\renewcommand{\epsilon}{\varepsilon}

\newcommand{\G}{\Omega}
\newcommand{\Guno}{{\G_1}}

\newcommand{\tu}{\widetilde u}
\newcommand{\cu}{\widehat u_h}

\newcommand{\mesh}{\mathcal{T}_h}

\newcommand{\xK}{{\mathbf x}_\K}
\newcommand{\rK}{\rho_\K}
\newcommand{\hK}{h_\K}

\newcommand{\EdgesP}{\mathcal{E}_\K}

\newcommand{\gstar}{\gamma_0}
\newcommand{\guno}{\gamma_1}
\renewcommand{\phi}{\varphi}

\newcommand{\gbP}{\gamma_2}

\newcommand{\Nstar}{N^\star}
\newcommand{\mapK}{F}
\newcommand{\unitcircle}{\mathcal{B}_1}
\newcommand{\family}{\mathcal{F}}

\newcommand{\bK}{\partial\K}
\newcommand{\BoK}{\mathbb{B}_k(\bK)}
\newcommand{\Tess}{\mathcal{T}_h}
\newcommand{\PinablaK}{\Pi^{\nabla}_{\K}}
\newcommand{\Pinabla}{\Pi^{\nabla}}
\newcommand{\aK}{a_{\K}}

\newcommand{\VEMK}{V^k_m(\K)}
\newcommand{\Vh}{V_h}
\newcommand{\stabK}{s^{\K}}
\newcommand{\dofi}[1]{\mathrm{dof}^K_i(#1)}
\newcommand{\uh}{u_h}
\newcommand{\vh}{v_h}

\newcommand{\NdofK}{N_K}
\newcommand{\sK}{s_K}

\newcommand{\tPinablaK}{\widetilde \Pi^\nabla_K}
\newcommand{\QnablaK}{Q^\nabla_K}
\newcommand{\tPinabla}{\widetilde \Pi^\nabla}
\newcommand{\Qnabla}{Q^\nabla}
\newcommand{\Vhstar}{{\widetilde V_h}}

\newcommand{\IK}{\mathfrak{I}_K}

\newcommand{\weight}{\omega}
\newcommand{\resto}{\rho^K}

\newcommand{\tf}{\widetilde f}
\newcommand{\Dh}{\Delta_h}
\newcommand{\DK}{\Delta^K}
\newcommand{\df}{\delta_f}

\newcommand{\subsubset}{\subset\subset}
\newcommand{\vphi}{v^{\phi}}
\renewcommand{\O}{{\Omega}}

\newcommand{\wh}{w_h}
\newcommand{\Komega}{\mathcal{K}_{\omega}}

\newcommand{\te}{\widetilde e}
\newcommand{\ce}{\widehat e_h}
\newcommand{\Vho}{\mathring V_h}

\newcommand{\Ih}{I_h}
\newcommand{\fIh}{\mathfrak{I}_h}

\newcommand{\Pstar}[1]{\widetilde{\mathbb{P}}_{#1}(\Tess)}
\newcommand{\Pizero}[1]{\Pi^0_{#1}}
\newcommand{\fh}{f_h}

\newcommand{\RR}{\mathbb{R}}
\newcommand{\Poly}[1]{\mathbb{P}_{#1}}

\newcommand{\pim}{\Pi^0_m}

\renewcommand{\g}{\gamma}
\renewcommand{\s}{\tau}

\newcommand{\hOmega}{\widehat\Omega}

\newcommand{\sk}{\kappa}
\newcommand{\sm}{\mu}

\newcommand{\Id}{{\rm I}}
\newcommand{\uno}{\Id}

\section{Introduction}
Besides its ability to handle complex geometries, one of the  features that contributed to the success of the finite element method as a tool for solving second order elliptic equations is their local behavior. Considering, to fix the ideas, the Poisson equation \[
- \Delta u = f, \quad \text{ in }\Omega \qquad u = 0,\quad \text{ on }\partial\Omega, 
\] the standard, well known, error estimate  provides, for the 
order $k$ finite element method, an error bound of the form
\begin{equation*}
u \in H^{s+1}(\Omega), \quad 0 < s \leq k \qquad \Longrightarrow \qquad\| u - u_h \|_{1,\Omega} \leq C h^{s} | u |_{s+1,\Omega}
\end{equation*}
($u$ and $u_h$ denoting, respectively, the exact and approximate solutions, and $h$ the mesh size). When measured in a global norm, the error can be negatively affected  by the presence of even a few isolated singularities. However, since the early days in the history of such a method it is well known  that, if a solution with low overall regularity is locally smoother, say in a subdomain $\Omega_1 \subset\subset\Omega$, an asymptotically higher order of convergence can be expected in any domain $\Omega_0 \subset\subset \Omega_1$.
More precisely, there exists an $h_0$ (depending on $\Omega_0$ and $\Omega_1$) such that, for $h < h_0$, the $H^1(\Omega_0)$ norm of the error can be bounded \blu{(\cite{NS}, see also \cite{SW,SW2})} as
\begin{equation}\label{NSbound} \|u - u_h
\|_{1,\Omega_0} \lesssim  h^{k} \|  u \|_{k+1,\Guno} + \| u - u_h \|_{1-k,\Omega},
\end{equation}
which, combined with an Aubin--Nitsche argument to bound the negative norm on the right hand side, yields, if $\Omega$ is sufficiently smooth,  an $O(h^k)$ bound for the error in the $H^1(\Omega_0)$ norm, provided $u \in H^{k+1}(\Omega_1)$, even when $u \not \in H^{k+1}(\Omega)$. This feature is particularly appealing, as it allows to take advantage of the local regularity of the solution, thus enabling the method to perform effectively. One can, for instance,  avoid the need  of refining the mesh, whenever possible singularities are localized far from a region of interest. 

It is therefore clearly desirable that new methods, aimed at  generalizing finite elements, retain this property. We focus here on the virtual element method, a discretization approach that generalizes finite elements to general polygonal space tessellations. Analogously to the finite element method, the virtual element discretization space  is continuously assembled from local spaces,
constructed element by element in such a way that polynomials up to order $k$ are included in the local space. Contrary to the finite element case, however, the functions in the space are not known in closed form but are themselves solution to a partial differential equation, which is, however, never solved in the implementation. In order to handle the discrete functions, these are instead split as the sum of an exactly computable polynomial part, and of a non polynomial part. Exact handling of the polynomial part alone turns out to be sufficient to guarantee  good approximation properties: to this end, the bricks needed for the solution of the problem at hand by a Galerkin approach (e.g. local contribution to the bilinear form and right hand side) are computed as a function of a set of unisolvent degrees of freedom, in a way that is locally exact for polynomials. The non polynomial part is instead handled by means of a {\em stabilization term}, which only needs to be spectrally equivalent to the bilinear form considered, resulting in a non conforming approximation. 
Since its introduction  in the early 2010s (see \cite{BeiraodaVeiga-Brezzi-Cangiani-Manzini-Marini-Russo:2013,VEMHitchhiker}), the virtual element method has gained the interest of the scientific community and has seen a rapid development, with numerous contributions aimed at the theoretical analysis of the method
(see e.g. \cite{
BeiraodaVeiga-Lovadina-Russo:2017,
BeiraodaVeiga-Brezzi-Cangiani-Manzini-Marini-Russo:2013,
Brenner-VEM,
Brenner-VEM-small-edges}
), its efficient implementation (see, e.g., \cite{
VEM-FETI-2D,
VEM-FETI-3D,Dassi-Schacchi-2020,Dassi-Scacchi-2020-2,Calvo2019}),  its extensions in different directions (see, e.g., \cite{VEM-curvo-nostro, 
VEM-general-mixed,
VEM-curved,
VEM-serendipity,Beiraoetal:hp:2016,VEM-Hdiv,Cernov-Marcati-Mascotto:2021}), and applications in different fields, such as fluid dynamics \cite{navier-stokes2d,stokes3d,Antonietti_VEM_Stokes,VEM_discrete_fracture}, continuum mechanics \cite{BEIRAODAVEIGA2015327,CHI2017148,beirao_elastic,
beirao_linear_elasticity,
VEM_3D_elasticity,wriggers2017efficient,
wriggers2016virtual}, electromagnetism \cite{BEIRAODAVEIGA2021} and others 
(\cite{Antonietti_VEM_Cahn,
	perugia_Helmholtz,
	ABPV:minsurf}).

\

In this paper, we aim at proving that the approximation by the virtual element method has good localization properties, similar to the ones displayed by the finite element method. More precisely, 
under suitable assumptions on the tessellation, we will prove that an estimate of the form \eqref{NSbound} also holds for the virtual element solution (see Theorem \ref{thm:main}). 
Under suitable assumptions on the domain $\Omega$ (the same needed for the analogous result in the finite element method),  this will imply that, provided $u\in H^{k+1}(\Omega_1)$, we have that $\| u - u_h \|_{1,\Omega_0} = O(h^{k})$, independently of the overall smoothness of the solution.
	
\

The paper is organized as follows. After presenting some notation and recalling how some inequalities do (or do not) depend on the shape and size of the elements (see Section \ref{sec:notation}), in Section \ref{sec:vem} we present the virtual element formulation we will be focusing on, and we will study the equation satisfied by the error. In Section \ref{sec:commutator} we will study how different linear and bilinear operators commute with the multiplication by a smooth weighting function. In Sections \ref{sec:negativeglobal}, \ref{sec:negativesmooth} and \ref{sec:negativelocal} we will provide bounds for the error in, global and local negative norms. In Section \ref{sec:main} we will prove the main result, namely Theorem \ref{thm:main}, and leverage it to obtain local error bounds (see Corollaries \ref{cor:finalpoly} and \ref{cor:finalsmooth}). \blu{In Section \ref{sec:enhanced} we briefly sketch an extension of the local error bounds to the so called {\em enhanced} version of  virtual element method \cite{EnhancedVEM}, which is often the one that can be found in actual implementations.} Finally, in Section \ref{sec:numerical}, we present some numerical results.	

Throughout the paper, we will write $A \lesssim B$ to indicate that $A \leq c B$, with $c$ independent of the mesh size parameters, and depending on the shape of the elements only through the constants $\gstar$ and $\guno$ in  the shape regularity \blu{A}ssumption \ref{geometry}. The notation $A \simeq B$ will stand for $A \lesssim B \lesssim A$.

\section{Notation and preliminary bounds}\label{sec:notation}

In the following we will use the standard notation for Sobolev spaces of both positive and negative index, and for the respective norms (see \cite{LionsMagenes}). Letting $\Omega \subset \mathbb{R}^2$ denote a bounded  polygonal domain, we will consider a family $\mathcal{F} = \{\Tess\}$ of  polygonal tessellations of $\Omega$, depending on a mesh size parameter $h$. We make the following assumption on the tessellations. 
\begin{assumption}\label{geometry} There exist constants $\gamma_0,\gamma_1 >0$ such that, letting $h_K$ denote the diameter of the polygon $K$, for all tessellation $\Tess \in \mathcal{F}$:
	\begin{enumerate}[label=(\alph*)]
		\item 	All polygons $\K \in \Tess$ are star shaped with respect to all points of a ball with center $\xK$ and radius $\rK$ with $\rK \geq \gstar \hK$;
		\item for all $K\in \Tess$	the distance between any two vertices of $\K$ is greater than $\guno \hK$.
	\end{enumerate}	
	Moreover, for the sake of notational simplicity, we assume that all tessellations are quasi-uniform, that is for all $K \in \Tess$, we have that $h_K \simeq h$.
\end{assumption}


\

The following trace and Poincar\'e inequalities hold, with constants only depending on the two constants $\gamma_0$ and $\gamma_1$ (see, e.g., \cite{BeiraodaVeiga-Lovadina-Russo:2017,Brenner-VEM-small-edges}).

\subsection*{Trace inequalities} Under Assumption \ref{geometry}(a)
for all $v \in H^1(K)$, we have
\begin{gather}\label{trace}
\| v \|_{0,\partial K}^2 \lesssim \| v \|_{0,K} ( h_K^{-1} \| v \|_{0,K} + \| \nabla v \|_{0,K}), \\[2mm]
| v |_{1/2,\bK} \lesssim | \blu{v}  |_{1,K}, \label{trace2}\\[2mm]
\inf_{w \in H^1_0(K)} | v + w |_{1,K}\lesssim | v |_{1/2,\bK}. \label{trace3}
\end{gather}

\subsection*{Poincar\'e inequality} Under Assumption
\ref{geometry}(a), for all $u \in H^1(K)$, we have
\begin{equation}\label{poincare}
\inf_{q \in \mathbb{R}} \| u - q \|_{0,K} \lesssim h_K \| \nabla u \|_{0,K}.
\end{equation}

\

We have the following proposition, where the norm $\| \cdot \|_{-1,K}$ is defined as
\[
\| F \|_{-1,K} = \sup_{{v\in H^1_0(K)} \atop{v \not =0}} \frac{\langle F, v \rangle }{| v |_{1,K}}, \qquad \text{$\langle \cdot,\cdot \rangle$ denoting the duality product of $H^{-1}(K)$ and $H^1_0(K)$. }
\]

\begin{proposition}\label{prop:2.2}
Under Assumption
	\ref{geometry}(a), for all $u \in H^1(K)$, we have
		\[
		| v |_{1,K} \lesssim | v |_{1/2,\blu{\partial}K} + \| \Delta v \|_{-1,K}.
		\]
\end{proposition}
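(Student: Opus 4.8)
\emph{Proof idea.} The plan is to reduce to functions in $H^1_0(K)$, which can be tested against $\Delta v$ through the duality pairing, while the ``boundary part'' of $v$ is controlled by $|v|_{1/2,\bK}$ via the trace inequality \eqref{trace3}. (I read $|v|_{1/2,K}$ as the fractional seminorm $|v|_{1/2,\bK}$, consistently with \eqref{trace2}--\eqref{trace3}; note also that, for any $v\in H^1(K)$, the distribution $\Delta v\in H^{-1}(K)$ is well defined by $\langle\Delta v,\phi\rangle=-\int_K\nabla v\cdot\nabla\phi$ for $\phi\in H^1_0(K)$, so that $\|\Delta v\|_{-1,K}$ makes sense.)

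First I would use \eqref{trace3}, together with the invariance of $H^1_0(K)$ under $w\mapsto-w$, to select $w_0\in H^1_0(K)$ with $|v-w_0|_{1,K}\lesssim|v|_{1/2,\bK}$; in the degenerate case $|v|_{1/2,\bK}=0$ the trace of $v$ on $\bK$ equals a.e.\ a constant $c$ and one simply takes $w_0=v-c$. Next I would estimate $|w_0|_{1,K}$: substituting $\nabla w_0=\nabla v-\nabla(v-w_0)$ into $|w_0|_{1,K}^2=\int_K\nabla w_0\cdot\nabla w_0$ and using $w_0\in H^1_0(K)$ gives $|w_0|_{1,K}^2=-\langle\Delta v,w_0\rangle-\int_K\nabla(v-w_0)\cdot\nabla w_0$, whence, bounding each term by Cauchy--Schwarz and the definition of $\|\cdot\|_{-1,K}$, $|w_0|_{1,K}^2\le\big(\|\Delta v\|_{-1,K}+|v-w_0|_{1,K}\big)\,|w_0|_{1,K}$. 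Dividing by $|w_0|_{1,K}$ (the estimate being trivial otherwise) and invoking the choice of $w_0$ yields $|w_0|_{1,K}\lesssim\|\Delta v\|_{-1,K}+|v|_{1/2,\bK}$.

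Finally, the triangle inequality $|v|_{1,K}\le|v-w_0|_{1,K}+|w_0|_{1,K}$ combined with the two bounds above gives the claim, the constant depending on the geometry only through the one in \eqref{trace3} (hence only on $\gstar$). I do not expect a genuine obstacle: the only mildly delicate points are the extraction of the near-minimizer $w_0$ from \eqref{trace3}, in particular handling the case $|v|_{1/2,\bK}=0$, and the observation that the pairing $\langle\Delta v,\cdot\rangle$ is legitimate for $v$ merely in $H^1(K)$; everything else is Cauchy--Schwarz and a triangle inequality.
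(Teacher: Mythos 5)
Your proposal is correct and follows essentially the same route as the paper: a decomposition of $v$ into a part with the same trace (controlled via \eqref{trace3}) and an $H^1_0(K)$ part (controlled by pairing against $\Delta v$), combined at the end. The only difference is cosmetic: the paper takes the exact harmonic/zero-trace splitting, for which the cross term vanishes by orthogonality, whereas you take a near-minimizer from \eqref{trace3} and absorb the cross term by Cauchy--Schwarz, which works just as well.
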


\begin{proof}
	We split $v$ as $v^H + v^0$ with $\Delta v^H = 0$ and $v^0 \in H^1_0(K)$. The splitting is stable with respect to the $H^1$ semi norm, that is we have
	\begin{equation}\label{splitting}
	\int_K | \nabla v |^2 = \int_K |\nabla  v^H + \nabla v^0 |^2 =  \int_K |\nabla  v^H |^2 + \int_K | \nabla v^0 |^2.
		\end{equation}
			Now,  for $w \in H^1_0(K)$ arbitrary, since $v^H = v$ on $\bK$, integrating by parts twice we can write 
		\begin{gather*} 
		| v^H |^2_{1,K} = 
		\int_{\bK} v^H   \nabla v^H \cdot\ n_K = \int_{\bK} (v + w) \nabla v^H \cdot \ n_K  = \int_{K} \nabla (v + w) \cdot \nabla v^H \leq | v + w |_{1,K} | v^H |_{1,K},
\end{gather*}
where $n_K$ denotes the outer unit normal to $\partial K$. Thanks to the arbitrariness of $w$, dividing both sides by $| v^H |_{1,K}$ and using \eqref{trace3} we obtain
\begin{equation} \label{boundvH}
| v^H |_{1,K} \leq \inf_{w\in H^1_0(K)} | v + w |_{1,K} \lesssim | v |_{1/2,\blu{\partial}K}.
\end{equation}
On the other hand, as $\Delta v^0 = \Delta v$, we can write
		\begin{equation*}
		| v^0 |^2_{1,K} = \int_K | \nabla v^0 |^2 = - \int_K v^0 \Delta v^0 = - \int_K v^0 \Delta v \leq | v^0 |_{1,K} \| \Delta v \|_{-1,K},
		\end{equation*}
		which, dividing both sides by $| v^0 |_{1,K}$, yields
		\begin{equation}\label{boundv0}
			| v^0 |_{1,K} \leq \| \Delta v \|_{-1,K}.
		\end{equation}
		By collecting \eqref{boundvH} and \eqref{boundv0} into \eqref{splitting} we obtain the desired bound.
	\end{proof}

\

 For $D$ being a polygonal element $K$ or an edge $e$ of $\Tess$, we let $\Poly{\ell}(D)$ denote the restriction to $D$ of the space of bivariate polynomials of order up to $\ell$. 

Under Assumption \ref{geometry}, we have the following polynomial approximation bounds 
\blu{\cite{DupontScott}}: for all $v \in H^t(K)$, $0 \leq s \leq t \leq \ell+1$
\begin{equation}\label{eq:polyapproxK}
\inf_{q \in \Poly{\ell}(K)} \| v - q \|_{s,K} \lesssim h_K^{t-s} | v |_{t,K},
\end{equation}
and, letting $h_e$ denote the length of the edge $e$, for all $v \in H^t(e)$, $0 \leq s \leq t \leq \ell+1$
\begin{equation}\label{eq:polyapproxe}
\inf_{q \in \Poly{\ell}(e)} \| v - q \|_{s,e} \lesssim h_e^{t-s} | v |_{t,e}.
\end{equation}

Moreover, the following inverse inequalities for polynomial functions hold: for all $q \in \Poly{\ell}(K)$ and $q \in \Poly{\ell}(e)$, and all $0 \leq s \leq t$
\begin{equation}\label{eq:polyinverse}
\| q \|_{t,K} \lesssim h_K^{-(t-s)} \| q \|_{s,K}, \qquad \| q \|_{t,e} \lesssim h_e^{-(t-s)} \| q \|_{s,e}.
\end{equation}


\section{The Virtual Element Method}\label{sec:vem}
In order to introduce the notation, let us review the definition of the simple form of the virtual element method that we are going to consider.
Letting $\Omega \subset \mathbb{R}^2$, we focus on the following model problem:
\begin{equation}\label{Pb:strong}
-\Delta u = f, \quad \text{ in }\Omega, \qquad u=0 \quad\text{ on }\partial\Omega,
\end{equation}
which, in weak form, rewrites as: find $u \in H_0^1(\Omega)$ such that 
\begin{equation}\label{Pb:weak}
a(u,v) = \int_{\Omega} f v, \quad \forall v \in H^1_0(\Omega), \qquad \text{ with }\quad a(u,v) = \int_{\Omega}\nabla u\cdot \nabla v.
\end{equation}
Let $\Tess \in \family$ denote a  tessellation of $\Omega$ in the family $\family$. 
For reasons that will be clear in the following, we consider a form of the Virtual Element discretization where we allow different approximation orders on the boundary and in the interior of the elements \cite{beirao2020different_order}. As usual, for all $K \in \Tess$ we let
\[
\BoK = \{ v \in C^0(\bK):\ v|_e \in \mathbb{P}_k(e)\ \text{ for all edges $e$ of $K$} \}.
\]
The local element space $\VEMK$ is defined \blu{(see \cite{beirao2020different_order})} as 
\[
\VEMK = \{ v \in H^1(K): \ v|_{\bK} \in \BoK, \text{ and } \Delta v \in \mathbb{P}_{m}(K) \}.
\]
We  assume that $\max\{0,k - 2\} \leq m \leq k$. The case $m = k-2$ corresponds to the simplest form of the virtual element method, as introduced in \cite{BeiraodaVeiga-Brezzi-Cangiani-Manzini-Marini-Russo:2013}. 

\

The following inverse inequality holds for all $v_h \in \VEMK$ (see \cite{Cangiani-Georgoulis-Pryer-Sutton:2016})
	\begin{equation}\label{inverseVEM}
	\| \Delta v_h \|_{0,K} \lesssim h_K^{-1} \| \nabla v_h \|_{0,K} .
	\end{equation}

\

\begin{remark}
For the sake of simplicity, we do not explicitly include in our analysis the simplest lowest order VEM space
\[
V^1_{-1}(K) = \{
\vh \in H^1(K):\ \vh|_{\partial K} \in \mathbb{B}_1(\partial K)\ \text{ and }\ \Delta \vh = 0\},
\]
which can be tackled by the same kind of argument but which would require a separate treatment, in particular when dealing with the terms involving the approximation of the right hand side. 
\end{remark}

\

The global discretization space $\Vh$ is defined as 
\begin{equation}\label{defVh}
\Vh = \{ v \in \blu{H_{0}^1}(\Omega): \ v|_K \in \VEMK \ \text{ for all $K \in \Tess$} \}.
\end{equation}
Letting 
\[
H^1(\Tess) = \{ u \in L^2(\Omega): u|_K \in H^1(K) \ \text{ for all $K \in \Tess$} \}
\]
denote the space of discontinuous piecewise $H^1$ functions on the tessellation $\Tess$, which we endow with the seminorm and norm
\[ | v |_{1,\Tess}^2 = \sum_{K\in \Tess} | v |_{1,K}^2, \qquad \| v \|_{1,\Tess} = \| v \|_{0,\Omega} + | v |_{1,\Tess},\]
we also introduce the discontinuous global discretization space
\[
\Vhstar = \{ v \in L^2(\Tess): \ v|_K \in \VEMK \ \text{ for all $K \in \Tess$} \}.
\]

As usual, we introduce the local projector $\PinablaK: H^1(K) \to \mathbb{P}_k(K)$ defined as
\[
\int_K \nabla(\PinablaK v - v) \cdot \nabla q = 0, \quad \forall q \in  \mathbb{P}_k(K), \quad \int_{K} (\PinablaK v - v) = 0.
\]

As $\PinablaK$ preserves polynomials of order up to $k$, the following proposition is not difficult to prove, the bound on $| \cdot |_{1,K}$ being a direct consequence of \eqref{eq:polyapproxK} and the bound on $\| \cdot \|_{0,K}$ being proved by an Aubin-Nitsche duality argument.
\begin{proposition}\label{prop:3.2}
	Let $v \in H^{1+s}(K)$, $0\leq s \leq k$. Then we have
	\[
	\| v - \PinablaK v \|_{0,K} + h_K | v - \PinablaK v |_{1,K} \lesssim h_K^{1+s} | v |_{1+s,K}.
	\]
\end{proposition}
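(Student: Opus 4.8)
The statement to prove is the proposition bounding $\| v - \PinablaK v \|_{0,K} + h_K | v - \PinablaK v |_{1,K}$ by $h_K^{1+s} | v |_{1+s,K}$. The plan is to treat the two contributions separately, starting from the fact that $\PinablaK$ reproduces $\Poly{k}(K)$ exactly, which lets us subtract an arbitrary polynomial $q \in \Poly{k}(K)$ before applying the projector.

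For the $H^1$ seminorm term, I would first observe that, by the defining orthogonality relation $\int_K \nabla(\PinablaK v - v)\cdot\nabla q = 0$ for all $q \in \Poly{k}(K)$, the map $v \mapsto \PinablaK v$ is (up to the mean-value normalization, which does not affect the seminorm) the $H^1$-orthogonal projection onto $\Poly{k}(K)$, hence $| \PinablaK v |_{1,K} \le | v |_{1,K}$ and more usefully $| v - \PinablaK v |_{1,K} \le | v - q |_{1,K}$ for every $q \in \Poly{k}(K)$. Indeed, for any such $q$ we have $\PinablaK q = q$, so $v - \PinablaK v = (v-q) - \PinablaK(v-q)$, and $\nabla\PinablaK(v-q)$ is the $L^2(K)$-orthogonal projection of $\nabla(v-q)$ onto $\nabla\Poly{k}(K)$, whence $| v - \PinablaK v |_{1,K} \le | v - q |_{1,K}$. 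Taking the infimum over $q$ and invoking the polynomial approximation estimate \eqref{eq:polyapproxK} with $\ell = k$, $t = 1+s$, $s$ replaced by $1$ (legitimate since $1 \le 1+s \le k+1$) gives $| v - \PinablaK v |_{1,K} \lesssim h_K^{s} | v |_{1+s,K}$, which multiplied by $h_K$ is the second term.

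For the $L^2$ term I would run the Aubin--Nitsche duality argument alluded to in the text. Set $e = v - \PinablaK v$ and, to keep things clean, first note we may assume $\int_K e = 0$: the mean of $e$ vanishes by the second defining condition of $\PinablaK$ (since $\int_K \Pinabla v = \int_K v$), so no correction is needed. Let $\psi \in H^1(K)$ solve the Neumann problem $-\Delta\psi = e$ in $K$, $\partial_n\psi = 0$ on $\bK$ (solvable since $\int_K e = 0$), with the elliptic regularity / stability bound $| \psi |_{2,K} \lesssim \| e \|_{0,K}$ — here one must be slightly careful about the shape-dependence of the constant, but on a domain star-shaped with respect to a ball of comparable radius this $H^2$ regularity estimate holds with a constant depending only on $\gstar$, and in fact a scaling argument reduces it to a fixed reference configuration. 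Then $\| e \|_{0,K}^2 = \int_K e\,(-\Delta\psi) = \int_K \nabla e \cdot \nabla\psi$ after integrating by parts (the boundary term vanishes because $\partial_n\psi = 0$). Now use the $H^1$-orthogonality of $e$ to polynomials: for any $\pi \in \Poly{k}(K)$, $\int_K \nabla e \cdot \nabla\pi = 0$, so $\| e \|_{0,K}^2 = \int_K \nabla e \cdot \nabla(\psi - \pi) \le | e |_{1,K} \inf_{\pi\in\Poly{k}(K)} | \psi - \pi |_{1,K} \lesssim | e |_{1,K}\, h_K | \psi |_{2,K} \lesssim h_K | e |_{1,K} \| e \|_{0,K}$, using \eqref{eq:polyapproxK} with $t = 2$, $s = 1$, $\ell = k$ (here we need $k \ge 1$; the case $k=0$, $m=-1$ is excluded by the standing hypothesis $\max\{0,k-2\}\le m \le k$ together with the remark setting aside $V^1_{-1}$, or can be handled by the Poincaré inequality \eqref{poincare} directly). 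Dividing by $\| e \|_{0,K}$ gives $\| e \|_{0,K} \lesssim h_K | e |_{1,K}$, and combining with the seminorm bound already established yields $\| v - \PinablaK v \|_{0,K} \lesssim h_K^{1+s} | v |_{1+s,K}$. Adding the two estimates completes the proof.

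\textbf{Main obstacle.} The only genuinely delicate point is the shape-uniformity of the $H^2$ elliptic regularity constant for the auxiliary Neumann problem on $K$; everything else is a routine combination of the reproduction property of $\PinablaK$, the polynomial approximation bounds \eqref{eq:polyapproxK}, and a standard duality argument. Under Assumption \ref{geometry}(a) the element is star-shaped with respect to a ball of radius $\gtrsim \gstar h_K$, and a dilation to unit diameter reduces the bound to a compact family of reference domains, so the constant can be taken to depend on $\gstar$ alone — this is exactly the kind of shape-dependence allowed by the paper's conventions.
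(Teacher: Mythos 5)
Your $H^1$ seminorm bound is correct and is exactly what the paper intends: $\nabla\PinablaK v$ is the $L^2$-orthogonal projection of $\nabla v$ onto $\nabla\Poly{k}(K)$, so the seminorm of the error is bounded by the best polynomial approximation error, and \eqref{eq:polyapproxK} finishes it. The problem is in your $L^2$ step. You solve a Neumann problem on $K$ and invoke the $H^2$ regularity estimate $|\psi|_{2,K}\lesssim\|e\|_{0,K}$, flagging only the shape-uniformity of the constant as delicate. But the issue is worse than uniformity: Assumption \ref{geometry}(a) permits non-convex elements (an L-shaped polygon is star-shaped with respect to a ball), and on a non-convex polygon the Neumann problem with $L^2$ data does \emph{not} produce an $H^2$ solution — near a re-entrant corner of opening $\omega>\pi$ the solution is generically only in $H^{1+\pi/\omega}$. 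Your dilation-to-a-reference-configuration argument cannot repair this, because the estimate already fails on a single fixed non-convex domain; there is no constant to make uniform. So the duality argument as written is not valid under the paper's geometric assumptions.

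The repair is immediate and makes the duality machinery unnecessary: by the second defining condition of $\PinablaK$, the error $e=v-\PinablaK v$ has zero mean on $K$, so the Poincar\'e inequality \eqref{poincare} gives directly
\[
\| e \|_{0,K} \lesssim h_K \| \nabla e \|_{0,K} \lesssim h_K\, h_K^{s} | v |_{1+s,K} = h_K^{1+s} | v |_{1+s,K},
\]
using the seminorm bound you already established. (The paper's remark attributes the $L^2$ bound to an Aubin--Nitsche argument, but since the target is only one extra power of $h_K$ relative to the $H^1$ bound, Poincar\'e already delivers it without any elliptic regularity.) With this substitution your proof is complete and matches the route the paper sketches.
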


\

Letting
\[
\Pstar{k} = \{ q \in L^2(\Omega): q|_K \in \mathbb{P}_k(K),\ \text{ for all $K \in \Tess$} \}
\]
denote the space of discontinuous piecewise polynomials of order up to $k$ defined on the tessellation $\Tess$, we let $\Pinabla: H^1(\Tess) \to \Pstar{k}$ be defined as
\[
\Pinabla v |_K := \PinablaK (v|_K).
\]

\

The discrete bilinear form $a_h: \Vh\times\Vh \to \mathbb{R}$ is defined as
\[
a_h(w_h,v_h) = \int_{\Tess}\nabla\Pinabla w_h \cdot \nabla \Pinabla v_h + s_h ((\Id-\Pinabla) w_h, (\Id-\Pinabla) v_h),
\]
where $\Id$ is the identity of $H^1(\Tess) $   and where, for shortness,  here and in the following we use the conventional notation 
\[
\int_{\Tess} X := \sum_{K\in \Tess} \int_K X.
\]
The stabilization bilinear form $s_h: \Vhstar \times \Vhstar$ is defined as the sum of local contributions
\[
s_h(w_h,v_h) = \sum_K \sK(w_h|_K,v_h|_K),
\]
where we assume, as usual, that, for all $K \in \Tess$, the local stabilization bilinear form $\sK: \VEMK \times \VEMK \to \mathbb{R}$ satisfies
\begin{gather}
\sK(w_h,v_h) \lesssim | w_h |_{1,K} | v_h |_{1,K} \qquad \forall \vh, \wh \in \VEMK, \label{conds1}\\[1mm]
\sK(\wh,\wh) \gtrsim | w_h |^2_{1,K} \qquad \forall \wh \in \VEMK \cap \ker \PinablaK. \label{conds2}
\end{gather}
In particular, for all $\vh \in \VEMK$, \eqref{conds1} and \eqref{conds2} yield
 \begin{equation} \label{stabtermcondition}
| \vh - \PinablaK \vh |^2_{1,K} \simeq \sK(\vh - \PinablaK \vh, \vh - \PinablaK \vh).
 \end{equation}
We let 
\[
a_h^K (\wh,\vh) = \int_{K} \nabla\PinablaK w_h \cdot \nabla \PinablaK v_h + \sK((\Id-\PinablaK) w_h, (\Id-\PinablaK) v_h ) 
\]
denote the local counterpart of the bilinear form $a_h$.  We recall that, thanks to \eqref{conds1} and \eqref{conds2}, for all $v \in \Vh$ we have that
\begin{equation}\label{3.5b}
a^K(v_h,v_h) = \int_K \nabla v_h \cdot \nabla v_h \simeq a^K_h(v_h,v_h).
\end{equation}


\

We next let $\Pizero{m}: L^2(\Omega) \to \Pstar{m}$ denote the $L^2(\Omega)$ orthogonal projection  onto the space $\Pstar{m}$
 of discontinuous piecewise polynomials of order at most $m$, and we let  
\[
\fh = \Pizero{m} f
\]
so that for all $\vh \in \Vh$
\begin{equation}\label{deffh}
\int_{\Omega} \fh \vh = \int_{\Omega} \Pizero{m}f\, \vh = \int_{\Omega} f\, \Pizero{m} \vh.
\end{equation}

\

The virtual element solution to Problem \eqref{Pb:strong} is obtained by solving the following discrete problem: find $\uh \in \Vh$ such that for all $\vh\in \Vh$
\begin{equation}\label{pb:discrete}
a_h(\uh,\vh) = \int_{\Omega} f_h \vh.
\end{equation}

\subsection{Extension of the discrete operators to $H^1(\Tess)$}
To carry out the forthcoming analysis, it will be convenient to extend some of the above operators, which are defined on the discrete space $\Vh$, to the whole $H^1(\Tess)$. To this aim, we introduce projectors $\tPinablaK: H^1(K) \to \VEMK$ and $\QnablaK: H^1(K) \to \VEMK$ defined as
\begin{gather}
%
%
	\int_K \nabla (\tPinablaK v -  v ) \cdot \nabla w_h= 0, \quad \forall w_h \in \VEMK, \qquad \int_{K} (\tPinablaK v - v) = 0, \label{deftPinablaK}\\
\QnablaK v = \tPinablaK v - \PinablaK v. \label{defQnablaK}
\end{gather}
Observe that we have $\tPinablaK \circ \PinablaK  = \PinablaK \circ \tPinablaK = \PinablaK$ and $\tPinablaK \circ \QnablaK = \QnablaK \circ \tPinablaK  = \QnablaK$. 
Also the projectors  $\tPinablaK$ and $\QnablaK$ can be assembled, element by element, to global projectors into the space $\Vhstar$ of discontinuous virtual element functions.
More precisely we define $\tPinabla: H^1(\Tess) \to \Vhstar$ and $\Qnabla: H^1(\Tess) \to \Vhstar$ as
\[
 \tPinabla v |_K = \tPinablaK (v|_K), \qquad \Qnabla v |_K = \QnablaK (v|_K).
\]
With this notation, we can extend the bilinear form $a_h:\Vh \times \Vh \to \mathbb{R}$ to a bilinear form $a_h: H^1(\Tess) \times H^1(\Tess) \to \RR$, defined as
\[
a_h(u,v) = \int_{\Tess} \nabla (\Pinabla u)\cdot \nabla (\Pinabla v) + s_h(\Qnabla u, \Qnabla v).
\]

\
We remark that, thanks to \eqref{3.5b}, we have
\begin{equation}\label{coerch}
a(u,u)  \simeq \int_{\Tess}|\nabla (\Id - \tPinabla) u|^2 + a_h(\tPinabla u,\tPinabla u), \qquad \forall u \in H^1(\Omega).\end{equation}

\

As $\QnablaK q = 0$ for all $q \in \Poly{k}(K)$, we easily have the following proposition, where the $H^1(K)$ seminorm bound stems from the best polynomial approximation and the $L^2(K)$ bound is obtained by a Poincar\'e inequality, as, by definition, $\QnablaK f$ is average free.
\begin{proposition}\label{boundQnablaK}
	Let $f \in H^{1+s}(K)$, $0 \leq s \leq k$. Then
	\[
	\| \QnablaK f \|_{0,K} + h_K | \QnablaK f |_{1,K} \lesssim h_K^{1+s} | f |_{1+s,K}.
	\]
	\end{proposition}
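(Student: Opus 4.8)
The plan is to establish the two claimed bounds separately, following the hints given in the statement. Recall that $\QnablaK q = 0$ for every polynomial $q\in\Poly{k}(K)$, since $\tPinablaK$ and $\PinablaK$ both reproduce polynomials of order up to $k$ (indeed $\tPinablaK q = q$ and $\PinablaK q = q$, so $\QnablaK q = q - q = 0$). Thus for any such $q$ we have $\QnablaK f = \QnablaK(f - q)$, which will let us trade $f$ for its best polynomial approximant.

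\textbf{Step 1: the $H^1$ seminorm bound.} First I would write, using the stability of $\tPinablaK$ in the $H^1$ seminorm (it is the energy projection onto $\VEMK$, so $|\tPinablaK v|_{1,K}\le |v|_{1,K}$) and the analogous stability of $\PinablaK$, that $|\QnablaK f|_{1,K} = |\QnablaK(f-q)|_{1,K}\le |\tPinablaK(f-q)|_{1,K} + |\PinablaK(f-q)|_{1,K}\lesssim |f-q|_{1,K}$ for any $q\in\Poly{k}(K)$. Taking the infimum over $q\in\Poly{k}(K)$ and invoking the polynomial approximation estimate \eqref{eq:polyapproxK} with $\ell = k$, $s = 1$, $t = 1+s$ gives $|\QnablaK f|_{1,K}\lesssim h_K^{s}|f|_{1+s,K}$, which is the seminorm part of the claim after multiplying by $h_K$.

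\textbf{Step 2: the $L^2$ bound.} Here I would use that $\QnablaK f$ is average free on $K$: by definition \eqref{deftPinablaK} we have $\int_K \tPinablaK v = \int_K v$ and by definition of $\PinablaK$ we have $\int_K \PinablaK v = \int_K v$, hence $\int_K \QnablaK f = \int_K \tPinablaK f - \int_K \PinablaK f = 0$. Therefore the Poincaré inequality \eqref{poincare} applies directly to $\QnablaK f$, giving $\|\QnablaK f\|_{0,K} = \inf_{c\in\RR}\|\QnablaK f - c\|_{0,K}\lesssim h_K|\QnablaK f|_{1,K}$, and combining with Step 1 yields $\|\QnablaK f\|_{0,K}\lesssim h_K^{1+s}|f|_{1+s,K}$. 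Adding the two contributions gives the stated estimate.

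\textbf{Main obstacle.} There is no serious obstacle; the only point requiring a moment's care is the $H^1$-stability of the two projectors on the \emph{difference} $f - q$ together with the fact that $\QnablaK$ kills polynomials — once those two observations are in place the result is an immediate consequence of \eqref{eq:polyapproxK} and \eqref{poincare}. One should also note that the argument needs Assumption \ref{geometry}(a) only, through the Poincaré inequality and the polynomial approximation bound.
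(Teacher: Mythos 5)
Your proof is correct and follows exactly the route the paper indicates (the paper only sketches it in the sentence preceding the proposition): kill polynomials with $\QnablaK$, use $H^1$-seminorm stability of both projectors plus \eqref{eq:polyapproxK} for the seminorm bound, and use the zero-average property together with the Poincar\'e inequality \eqref{poincare} for the $L^2$ bound. No discrepancies.
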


\subsection{Error equations}
Letting $u \in H^1_0(\Omega)$ and $\uh \in \Vh$ respectively denote the solutions of Problem \eqref{Pb:strong} and \eqref{pb:discrete}, we can now write two error equations, satisfied by $u - u_h$. Indeed, for $v \in H^1(\Omega)$ arbitrary we have
\begin{multline*}
a_h(u,v) = \int_{\Tess}\nabla \Pinabla u\cdot\nabla \Pinabla v + s_h (\Qnabla u, \Qnabla v) \\ = \int_{\Omega} \nabla u \cdot \nabla v - \int_{\Tess}\nabla \Qnabla u \cdot \nabla \Qnabla v - \int_{\Tess}\nabla (\Id-\tPinabla) u \cdot \nabla (\Id-\tPinabla) v + s_h (\Qnabla u,\Qnabla v),
\end{multline*}
whence
\begin{equation}\label{eq:conformity}
a(u,v) - a_h(u,v) =
\Dh(u,v) + \int_{\Tess}\nabla (\Id-\tPinabla) u \cdot \nabla (\Id-\tPinabla) v,
\end{equation}
where $\Dh: H^1(\Tess) \times H^1(\Tess) \to \mathbb{R}$ is defined as
\begin{equation}\label{defDh}
\Dh (w,v) = \sum_K \DK (u,v)
\end{equation}
with 
\[
\DK(w,v) = \int_{K} \nabla \QnablaK w \cdot \nabla \QnablaK v - s_h(\QnablaK w,\QnablaK v).
\]

\

Then, letting $\df = f - \fh$, the error $u - u_h$ satifies, for all $\vh \in \Vh$, 
\begin{multline*}
a_h(u - u_h, \vh) = a(u, \vh) + a_h(u,\vh) - a(u,\vh) - a_h(\uh,\vh)  \\= \int_{\Omega} f \vh +  a_h(u,\vh) - a(u,\vh) - \int_{\Omega}\fh \vh,
\end{multline*}
finally yielding
the following error equation
\begin{equation}\label{eq:error}
a_h ( u - u_h , \vh) = \int_{\Omega} \df\, \vh - \Dh ( u , \vh), \qquad \forall \vh \in \Vh.
\end{equation}
Combining \eqref{eq:error} and \eqref{eq:conformity} we  also have
\begin{equation}\label{othererrorequation}
a(u-u_h,\vh)  = \Dh(u-\uh,\vh) +
\int_{\Omega}\df\, \vh - \Dh(u,\vh), \qquad \forall \vh \in \Vh.
 \end{equation}

\

Using Proposition \ref{boundQnablaK}, we easily see that the following proposition holds.
\begin{proposition}\label{boundfurbo} Let $w \in H^{s+1}(K)$, $v \in H^{t+1}(K)$ with    
	 $t \geq 0, s \leq k$. Then we have 
	\begin{gather}
	| \DK(w,v) | \lesssim h_K^{s+t} | w |_{s+1,K} | v |_{t+1,K}.\label{errorDeltaK}
	\end{gather}
\end{proposition}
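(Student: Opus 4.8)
The plan is to reduce $\DK(w,v)$ to a product of $H^1(K)$-seminorms of $\QnablaK w$ and $\QnablaK v$, and then to quantify each of those seminorms via the approximation estimate of Proposition~\ref{boundQnablaK}, the gain of powers of $h_K$ coming from the fact that $\QnablaK$ annihilates $\Poly{k}(K)$.

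First I would bound the two contributions to $\DK(w,v) = \int_K \nabla\QnablaK w\cdot\nabla\QnablaK v - \sK(\QnablaK w,\QnablaK v)$ separately. Since $\QnablaK w,\QnablaK v\in\VEMK$ by construction, Cauchy--Schwarz gives $\bigl|\int_K \nabla\QnablaK w\cdot\nabla\QnablaK v\bigr|\le |\QnablaK w|_{1,K}\,|\QnablaK v|_{1,K}$, while the continuity assumption \eqref{conds1} on the stabilization yields $|\sK(\QnablaK w,\QnablaK v)|\lesssim |\QnablaK w|_{1,K}\,|\QnablaK v|_{1,K}$. Adding the two estimates gives $|\DK(w,v)|\lesssim |\QnablaK w|_{1,K}\,|\QnablaK v|_{1,K}$.

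It then remains to insert Proposition~\ref{boundQnablaK}, applied with exponent $s$ in the argument $w$ and with exponent $t$ in the argument $v$ (replacing $t$ by $k$ should $t>k$, so that the hypotheses $0\le s\le k$ and $0\le t\le k$ of that proposition are satisfied), which gives $|\QnablaK w|_{1,K}\lesssim h_K^{\,s}|w|_{s+1,K}$ and $|\QnablaK v|_{1,K}\lesssim h_K^{\,t}|v|_{t+1,K}$; multiplying the two produces the claimed bound $h_K^{\,s+t}|w|_{s+1,K}|v|_{t+1,K}$. There is no genuine obstacle here: the argument is essentially three lines once Proposition~\ref{boundQnablaK} is available, and the only point worth watching is that the abbreviated hypotheses ``$t\ge 0$, $s\le k$'' be read together with $s\ge 0$ and (after the above reduction) $t\le k$, so that the polynomial approximation estimate for $\QnablaK$ can legitimately be invoked in each argument.
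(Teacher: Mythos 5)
Your proof is correct and is exactly the argument the paper intends: the paper omits a written proof, stating only that the result follows ``easily'' from Proposition~\ref{boundQnablaK}, and the route is precisely your Cauchy--Schwarz plus \eqref{conds1} reduction to $|\QnablaK w|_{1,K}\,|\QnablaK v|_{1,K}$ followed by the approximation bound. Your parenthetical caveat about reading the hypotheses as $0\le s\le k$ and $t\le k$ is also the right one, since that is how \eqref{errorDeltaK} is actually invoked later (always with the exponent on the second argument capped at $k$).
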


Moreover,  the following proposition provides an a priori  bound for the operator $\df$ appearing at the right hand side of the error equation.
\begin{proposition}\label{boundrhs} Let $f \in H^{r}(K)$, $v \in H^{t+1}(K)$ with $0 \leq r \leq m+1$,  $0 \leq t \leq m$. Then we have 
	\begin{gather}
	\int_K \df\, v  \lesssim
	h_K^{r+t+1} | f |_{r,K} | v |_{t+1,K}. \label{errorrhs}
	\end{gather}
\end{proposition}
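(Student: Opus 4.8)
The goal is to bound $\int_K \delta_f\, v$ where $\delta_f = f - \Pi^0_m f$ and $v \in H^{t+1}(K)$ with $0 \leq t \leq m$. The natural idea is to exploit the $L^2$-orthogonality of $\Pi^0_m$ in two directions: against $f$ and against $v$. Since $\Pi^0_m f$ is the $L^2(K)$ projection of $f$ onto $\Poly{m}(K)$, we have $\int_K \delta_f\, q = 0$ for every $q \in \Poly{m}(K)$; equivalently $\int_K \delta_f\, v = \int_K \delta_f\, (v - q)$ for any such $q$. We can thus write, for arbitrary $q_f \in \Poly{m}(K)$ and $q_v \in \Poly{m}(K)$,
\[
\int_K \delta_f\, v = \int_K (f - q_f)(v - q_v),
\]
using orthogonality once to subtract $q_v$ from $v$ (legitimate since $\delta_f \perp \Poly{m}(K)$) and once more, reading $\int_K \Pi^0_m f\,(v-q_v) = \int_K f\, \Pi^0_m(v - q_v)$ and noting $\Pi^0_m(v-q_v)$ — actually the cleanest route is: $\int_K \delta_f\,(v-q_v) = \int_K f\,(v-q_v) - \int_K \Pi^0_m f\,(v-q_v)$, and since $v - q_v$ need not be polynomial we instead symmetrize directly: $\int_K \delta_f\, v = \int_K \delta_f\,(v - q_v) = \int_K (f - \Pi^0_m f)(v-q_v)$, then subtract $q_f$ from the first factor using that $v - q_v \perp \Poly{m}$ is false in general, so better to keep $\int_K(f-\Pi^0_m f)(v-q_v)$ and bound the first factor's polynomial-subtracted form. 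Concretely, $\int_K \delta_f\,(v-q_v) = \int_K f\,(v-q_v) - \int_K \Pi^0_m f\,(v-q_v)$; replacing $f$ by $f - q_f$ in the first integral changes it by $\int_K q_f(v-q_v)$, which equals $\int_K \Pi^0_m(q_f)(v-q_v)\cdot$… to avoid this fuss, observe that $\int_K \delta_f (v - q_v) = \int_K \delta_f (v-q_v)$ and that $\delta_f = (I - \Pi^0_m)(f - q_f)$ since $\Pi^0_m$ fixes $q_f$; hence $\int_K \delta_f(v-q_v) = \int_K (f-q_f)(v-q_v) - \int_K \Pi^0_m(f-q_f)\,(v-q_v)$, and the second term is bounded by $\|\Pi^0_m(f-q_f)\|_{0,K}\|v-q_v\|_{0,K} \leq \|f-q_f\|_{0,K}\|v-q_v\|_{0,K}$ by $L^2$-stability of $\Pi^0_m$. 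So in all cases
\[
\Big| \int_K \delta_f\, v \Big| \lesssim \inf_{q_f \in \Poly{m}(K)}\|f - q_f\|_{0,K}\ \inf_{q_v \in \Poly{m}(K)}\|v - q_v\|_{0,K}.
\]

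**Key steps.** First, reduce $\int_K \delta_f\, v$ to the product of best $L^2$ polynomial approximation errors of $f$ and of $v$, via the orthogonality and $L^2$-stability of $\Pi^0_m$ as sketched above. Second, invoke the polynomial approximation estimate \eqref{eq:polyapproxK} with $\ell = m$: for $f \in H^r(K)$ with $0 \leq r \leq m+1$ we get $\inf_{q_f}\|f - q_f\|_{0,K} \lesssim h_K^r |f|_{r,K}$, and for $v \in H^{t+1}(K)$ with $0 \leq t+1 \leq m+1$ (i.e. $t \leq m$) we get $\inf_{q_v}\|v - q_v\|_{0,K} \lesssim h_K^{t+1}|v|_{t+1,K}$. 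Third, multiply the two bounds to obtain $|\int_K \delta_f\, v| \lesssim h_K^{r + t + 1}|f|_{r,K}|v|_{t+1,K}$, which is exactly \eqref{errorrhs}. The hypotheses $r \leq m+1$ and $t \leq m$ are precisely what is needed so that \eqref{eq:polyapproxK} applies with the polynomial degree $m$ used in defining $\Pi^0_m$.

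**Main obstacle.** There is no serious obstacle here; the only point requiring a little care is the double use of orthogonality/stability to get polynomial subtraction in \emph{both} arguments simultaneously — one cannot literally write $\int_K \delta_f\, v = \int_K \delta_f\,(v - q_v)$ and then $= \int_K (f-q_f)(v - q_v)$ because $v - q_v \notin \Poly{m}(K)$, so the clean argument is the one above: write $\delta_f = (I-\Pi^0_m)(f - q_f)$ and expand, using $\|\Pi^0_m\|_{L^2\to L^2} = 1$ on the cross term. Everything else is a direct citation of \eqref{eq:polyapproxK}.
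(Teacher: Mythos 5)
Your proof is correct and follows essentially the same route as the paper: use the $L^2(K)$-orthogonality of $\delta_f$ to $\Poly{m}(K)$ to insert a polynomial into the second factor, apply Cauchy--Schwarz, and invoke \eqref{eq:polyapproxK} with $\ell=m$ on each factor. The detour through arbitrary $q_f$ and the $L^2$-stability of $\Pi^0_m$ is unnecessary: the paper simply writes $\int_K \df\, v=\int_K (f-\Pi^0_m f)(v-\Pi^0_m v)$, and since $\|f-\Pi^0_m f\|_{0,K}$ already \emph{is} the best $L^2$ polynomial approximation error, no cross term ever appears.
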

\begin{proof}
We have
	\begin{multline}\label{eq:3.20}
	\int_K (f - \fh) v = \int_K (f - \pim f) (v - \Pizero{m} v) \lesssim \| (\Id -\Pizero{m}) f \|_{0,K} \| (\Id -\Pizero{m}) v \|_{0,K}\\[2mm]
	\lesssim
	h_K^{r } | f |_{r,K} h^{t+1}_K | v |_{t+1,K},
	\end{multline}
	which is the desired result. 
\end{proof}

Remark that using the above bounds, in combination with the error equation and an approximation estimate (see e.g. \eqref{interperror}), allows to retrieve the following \blu{(essentially well known, see \cite{BeiraodaVeiga-Brezzi-Cangiani-Manzini-Marini-Russo:2013, beirao2020different_order})} bound on the error $u - \uh$: if the solution $u$ and the source term $f$ of Problem \eqref{Pb:weak} satisfy, respectively, $u \in H^{r+1}(\Omega)$, 
$r \geq 0$ and $f \in H^{\rho}(\Omega)$, $\rho \geq 0$ then it holds that
\begin{equation}\label{stimaerrorestandard}
\| u - \uh \|_{1,\Omega} \lesssim h^{\min\{r,k\}} | u |_{1+r,\Omega} + h^{\min\{\rho,m+1\} + 1} | f |_{\rho,\Omega}.
\end{equation}
Remark that, as $f = -\Delta u$, we have that $\rho \geq r-1$. Moreover, by construction $m \geq k-2$. Then the second term on the right hand side, deriving from the approximation of the source term, 
 is asymptotically dominated by the first term, namely $h^{\min\{r,k\}} \| u \|_{1+r,\Omega}$. 
 

\section{Commutator properties for  the VEM space}\label{sec:commutator}
The local bounds we aim at proving will involve multiplying different quantities by smooth weights. A key role will be played by the error resulting from commutating the action of such weights with different operators appearing in the definition and analysis of the VEM method. To analyze such errors, we start by introducing a local quasi-interpolation operator similar to the one proposed in  \cite{VEM-curved} and defined as follows. Given $v \in H^1(K)$ with $v|_{\bK} \in C^0(\bK)$ and $\Delta v \in L^2(K)$, we let $\IK v = v_h \in \VEMK$ be defined by 
\[
v_h |_{\bK} = I_K v, \qquad \Delta v_h = \pim \Delta v,
\]
where $I_K : C^0(\bK) \to \BoK$ denotes the edge by edge  interpolation operator with, as interpolation nodes, the nodes of the $k+1$ points Gauss-Lobatto quadrature formula, and where, by abuse of notation, we let $\pim : L^2(K) \to \Poly{m}(K)$ denotes the $L^2$ orthogonal projection. As  $\IK : H^2(K) \to \VEMK \subset H^1(K)$ is bounded and it preserves the polynomials of degree $k$,  we can see that, for $v \in H^{1+s}(K)$, $1\leq s \leq k$ we have
\begin{equation}\label{interperror}
\| v - \IK v \|_{0,K} + h_K | v - \IK v |_{1,K} \lesssim h_K^{s+1} | v |_{s+1,K}.
\end{equation}

\

Let now $\weight \in C^\infty(\bar\Omega)$ be a smooth weight function. Observe that, for $K \in \Tess$, we can split $\weight|_K$ as
\begin{equation}\label{splittingweight}
\weight|_K = \bar\weight^K + \resto, \quad \text{with}\quad 
\bar \weight^K \in \mathbb{R}, \quad \| \resto \|_{0,\infty,K} \lesssim  h_K\| \weight \|_{1,\infty,\Omega}, \quad \| \resto \|_{2,\infty,K} \lesssim \| \weight \|_{2,\infty,\Omega},
\end{equation}
(we can for instance take $\bar \weight^K = \weight(x^K)$, $x^K$ being the barycenter of $K$).

\
 
We have the following lemma, which we prove by an approach similar to the one in \cite{bertoluzza1999discrete}.
\begin{lemma}\label{lem:disccom}
For all $K \in \Tess$, for all $v \in H^{s+1}(K)$, $1 \leq s \leq k$, for all $v_h \in \VEMK$,  it holds
	\begin{equation}\label{disccom1}
	| \weight (v + \vh) - \IK(\weight( v + \vh)) |_{1,K} \lesssim h_K^s | v |_{s+1} + h_K \| v + \vh \|_{1,K},
	\end{equation}
	the implicit constant in the inequality depending on $\| \weight \|_{2,\infty,\Omega}$.
\end{lemma}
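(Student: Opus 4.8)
The key idea is to write $\weight(v+\vh) = \bar\weight^K(v+\vh) + \resto\,(v+\vh)$ using the splitting \eqref{splittingweight}, and then to treat the two summands separately, exploiting that $\IK$ is linear and that $\IK$ reproduces VEM functions (so $\IK(\bar\weight^K \vh) = \bar\weight^K \vh$ since $\bar\weight^K$ is a constant and $\VEMK$ is a linear space, hence the only genuine interpolation errors come from multiplying by $\resto$ or from applying $\IK$ to the non-virtual part $v$). Concretely, I would write
\[
\weight(v+\vh) - \IK(\weight(v+\vh)) = \big(\bar\weight^K v - \IK(\bar\weight^K v)\big) + \big(\resto(v+\vh) - \IK(\resto(v+\vh))\big),
\]
and bound the $H^1(K)$ seminorm of each bracket. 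For the first bracket, $\bar\weight^K$ is a constant bounded by $\|\weight\|_{0,\infty,\Omega}$, so \eqref{interperror} applied to $v\in H^{s+1}(K)$ gives $|\bar\weight^K v - \IK(\bar\weight^K v)|_{1,K}\lesssim h_K^s |v|_{s+1,K}$, which is the first term on the right-hand side of \eqref{disccom1}.

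The second bracket is the heart of the matter: I need to control $|\resto(v+\vh) - \IK(\resto(v+\vh))|_{1,K}$ by $h_K\|v+\vh\|_{1,K}$, using that $\resto = \weight|_K - \bar\weight^K$ is small, namely $\|\resto\|_{0,\infty,K}\lesssim h_K$ and $\|\resto\|_{2,\infty,K}\lesssim 1$ (implicit constants depending on $\|\weight\|_{2,\infty,\Omega}$). The delicate point is that $\resto(v+\vh)$ need not lie in $H^2(K)$ — only in $H^1(K)$ — so \eqref{interperror} is not directly applicable; here is where the ``discrete commutator'' technique of \cite{bertoluzza1999discrete} enters. Write $w_h = v+\vh$ and decompose $w_h = \bar w_h + \tilde w_h$ where on the boundary we use the best piecewise-polynomial approximation and in the interior control $\Delta w_h \in \Poly m(K)$ with the inverse inequality \eqref{inverseVEM}; then $\IK$ acts on $\resto w_h$ through the edge interpolation $I_K$ of $\resto w_h|_{\bK}$ and through $\pim\Delta(\resto w_h)$. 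On each edge $e$, $\resto w_h|_e$ is a product of a smooth function with a polynomial of degree $k$, and one estimates $|\resto w_h|_e - I_K(\resto w_h)|_e|$ in the relevant (fractional $H^{1/2}$, via trace) norm using the edge approximation/inverse estimates \eqref{eq:polyapproxe}, \eqref{eq:polyinverse} and the smallness $\|\resto\|_{\infty}\lesssim h_K$, $\|\resto'\|_{\infty}\lesssim 1$; for the interior Laplacian part one uses $\Delta(\resto w_h) = \resto\Delta w_h + 2\nabla\resto\cdot\nabla w_h + w_h\Delta\resto$ together with $\|\Delta w_h\|_{0,K}\lesssim h_K^{-1}|w_h|_{1,K}$ from \eqref{inverseVEM}, the bounds on $\resto$, and a Poincaré-type control of $\|w_h\|_{0,K}$. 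Assembling these through the trace inequalities \eqref{trace}, \eqref{trace2}, \eqref{trace3} (to pass from edge data and interior data back to an $H^1(K)$ seminorm bound on the harmonic-plus-$H^1_0$ decomposition, à la Proposition \ref{prop:2.2}) and summing, every term carries at least one power of $h_K$ against $\|w_h\|_{1,K}$, which yields the second term $h_K\|v+\vh\|_{1,K}$ in \eqref{disccom1}.

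I expect the main obstacle to be precisely this second bracket: handling $\resto(v+\vh)$ with only $H^1$-regularity of $\vh$ means one cannot simply invoke a stability-and-approximation property of $\IK$ on $H^2$, and one must instead argue on the defining data of $\IK v_h$ (the boundary trace and the projected Laplacian) and carefully track how multiplication by the smooth $\resto$ redistributes regularity, using the inverse inequality \eqref{inverseVEM} to absorb the loss. A secondary bookkeeping point is to make sure the $H^{1/2}(\bK)$-norm estimates of the boundary interpolation error are consistent with the scaling of $\resto$ and are then lifted to an $H^1(K)$-seminorm bound with no loss of powers of $h_K$, which is exactly what \eqref{trace3} (equivalently the harmonic extension step in Proposition \ref{prop:2.2}) provides.
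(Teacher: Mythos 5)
Your overall architecture for the genuinely ``discrete'' part of the argument is the paper's: split $\weight|_K = \bar\weight^K + \resto$, reduce to the commutator with $\resto$, control the $H^1(K)$ seminorm via Proposition \ref{prop:2.2} (boundary $H^{1/2}$ term plus $H^{-1}$ norm of the Laplacian), treat the edges with $H^{1/2}_{00}$-interpolation and the edge inverse estimates, and treat the Laplacian with the product rule, the $h_K$-duality bound for average-free functions, and \eqref{inverseVEM}. That part is fine.

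The gap is in how you handle the non-virtual function $v$. You push $\resto v$ into the second bracket and claim $| \resto(v+\vh) - \IK(\resto(v+\vh)) |_{1,K} \lesssim h_K \| v + \vh \|_{1,K}$, but every tool you invoke there is specific to $\VEMK$: for $w = v + \vh$ with $v \in H^{s+1}(K)$ generic, $\Delta w \notin \Poly{m}(K)$, $w|_e$ is not a polynomial on the edges, and \eqref{inverseVEM} does not apply. Worse, the claimed intermediate bound is false, not merely unproven: taking $\vh = 0$, the operator $\IK$ is built from point values on $\bK$, which are controlled by $\| v \|_{2,K}$ but not by $\| v \|_{1,K}$, so $| \resto v - \IK(\resto v) |_{1,K}$ cannot be bounded by $h_K \| v \|_{1,K}$ uniformly over $v \in H^2(K)$; the higher-order seminorm $|v|_{s+1,K}$ must enter this term too. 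The paper's device is to insert $\PinablaK v$: since $\PinablaK v + \vh \in \VEMK$, the discrete commutator estimate \eqref{eq:comm4} applies to the \emph{whole sum} $\weight(\PinablaK v + \vh)$ (preserving the cancellation between $v$ and $\vh$, which your further splitting into $\resto v$ and $\resto \vh$ would destroy and which is essential for the later application with $v = u$, $\vh = -u_h$), while the residual $\weight(v - \PinablaK v)$ lies in $H^2(K)$ with $| \weight(v-\PinablaK v) |_{2,K} \lesssim h_K^{s-1} | v |_{s+1,K}$ and is handled by \eqref{interperror}; finally $\| \PinablaK v + \vh \|_{1,K}$ is traded for $\| v + \vh \|_{1,K} + h_K^{s}|v|_{s+1,K}$ by adding and subtracting $v$. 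Your first bracket $\bar\weight^K v - \IK(\bar\weight^K v)$ is handled correctly, but it does not exhaust the contribution of $v$, and the leftover $\resto v$ is exactly where your plan breaks.
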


\begin{proof} 
Let $v_h \in \VEMK$. As $\IK \vh = \vh$ and as $\IK$ is a linear operator, we have 
\[
\weight \vh - \IK(\weight \vh) = \resto \vh - \IK(\resto \vh),
\]
with $\resto$ given by \eqref{splittingweight}.
Then, using Proposition \ref{prop:2.2} we can write
\begin{multline}\label{comm0}
| \weight \vh - \IK(\weight \vh) |_{1,K} \lesssim | \weight \vh - \IK(\weight \vh) |_{1/2,\bK} + \| \Delta(\weight \vh - \IK(\weight \vh)) \|_{-1,K}\\
= | \resto \vh - \IK(\resto  \vh) |_{1/2,\bK} + \| \Delta(\resto  \vh - \IK(\resto  \vh)) \|_{-1,K}.
\end{multline}
We separately bound the two terms on the right hand side of \eqref{comm0}, starting from the first one.	We remark that, on $e$ edge of $K$ we have that $\resto  \vh - \IK(\resto \vh) \in H^1_0(e) \subseteq H^{1/2}_{00}(e)$, where, we recall, $H^{1/2}_{00}(e)$ can be defined as the space of those functions $v \in L^2(e)$ such that setting  $\widetilde v = v$ in $e$ and $\widetilde v = 0$ in $\bK \setminus e$ it holds that $\widetilde v \in H^{1/2}(\bK)$. We endow $H^{1/2}_{00}(e)$ with the norm $\| v \|_{H^{1/2}_{00}(e)} = | \widetilde v |_{1/2,\bK}$. We recall that $H^{1/2}_{00}(e)$ is the interpolation space of exponent $1/2$ with respect to the interpolation couple $(L^2(e),H^1_0(e))$.
	Then, using a standard interpolation bound (see \cite{Triebel}) 
	and \eqref{interperror}, we can write
	\begin{multline}\label{eq:comm1}
	| \resto  \vh - \IK(\resto  \vh) |_{1/2,\bK} \lesssim \sum_\blu{\text{edges $e \subset \bK$}} \| \resto \vh - \IK(\resto  \vh) \|_{H^{1/2}_{00}(e)} \\
	\lesssim 
	\sum_\blu{\text{edges $e \subset \bK$}}  \| \resto  \vh - \IK(\resto \vh) \|_{0,e}^{1/2}  | \resto \vh - \IK(\resto  \vh) |_{1,e}^{1/2} \\ \lesssim
	\sum_\blu{\text{edges $e \subset \bK$}}  h_e^{1/2} | \resto \vh - \IK(\resto  \vh) |_{1,e}.
	\end{multline}

 Now, using \eqref{eq:polyapproxe} and \eqref{eq:polyinverse}, we can write
 \begin{multline*}
| \resto \vh - \IK(\resto \vh) |_{1,e} 
 \lesssim h_e | \resto v_h |_{2,e} 
 \lesssim h_e (
 \| \resto \|_{0,\infty,e} | v_h |_{2,e} + \| \resto \|_{2,\infty,e} \| v_h \|_{1,e}
 )\\[2mm]
 \lesssim h_e  ( h_e | \vh |_{2,e} + \| \vh \|_{1,e} ) \lesssim h_e \| \vh \|_{1,e} \lesssim h_e^{1/2} \| \vh \|_{1/2,e},
 \end{multline*}
	which yields (we recall that, by Assumption \ref{geometry}(b), $h_e \simeq h_K$)
	\begin{equation}\label{eq:comm2}
		|\resto \vh - \IK(\resto \vh) |_{1/2,\bK} \lesssim \sum_\blu{\text{edges $e \subset \bK$}}  h_e \| \vh \|_{1/2,e} \lesssim h_K \| \vh \|_{1/2,\bK} \lesssim h_K \| v_h \|_{1,K}.
	\end{equation}
	As far as the second term at the right hand side in \eqref{comm0} is concerned, we have 
	\begin{equation*}
	\Delta(\resto \vh) = \vh \Delta \resto + 2\, \nabla\resto\cdot\nabla \vh + \resto \Delta \vh,
	\end{equation*}
	as well as
	\[
	\Delta (\IK (\resto \vh)) = \pim (\Delta(\resto \vh) ) = \pim (\vh \Delta \resto ) + 2\, \pim (\nabla\resto\cdot\nabla \vh) + \pim (\resto \Delta \vh),
	\]
	and then, by \blu{triangle} inequality,
	\begin{multline*}
	 \| \Delta(\resto \vh - \IK(\resto \vh)) \|_{-1,K}  \lesssim \| \vh \Delta \resto - \pim (\vh \Delta \resto ) \|_{-1,K} \\[1.8mm]+
	\|  \nabla\resto\cdot\nabla \vh - \pim ( \nabla\resto\cdot\nabla \vh) \|_{-1,K} +
	\|  \resto \Delta \vh - \pim  (\resto \Delta \vh) \|_{-1,K}.
	\end{multline*}
	\NOTE{It is in the previous step that, if we use the enhancement within the space, it does not work, at least, not directly, as for the enhanced space we cannot define the interpolation by asking that $\Delta(\IK v) = \pim(\Delta v)$. Probably, the remedy lies in a more clever definition of the interpolation operator.}
	We can bound the three terms by using a standard duality argument, which allows to bound the $H^{-1}(K)$ norm of any average free function 
	with $h_K$ times its $L^2$ norm, and we obtain
	\begin{multline*}
	\| \vh \Delta \resto - \pim (\vh \Delta \resto ) \|_{-1,K}  \lesssim h_K \| \vh \Delta \resto - \pim (\vh \Delta \resto ) \|_{0,K} \\
	\lesssim h_K \| \vh \Delta \resto \|_{0,K} \lesssim h_K \| \resto \|_{2,\infty,K} \| \vh \|_{0,K},	\end{multline*}
	as well as
	\begin{multline*}		\|  \nabla\resto\cdot\nabla \vh - \pim ( \nabla\resto\cdot\nabla \vh) \|_{-1,K} \lesssim h_K 	\|  \nabla\resto\cdot\nabla \vh - \pim ( \nabla\resto\cdot\nabla \vh) \|_{0,K}\\[2mm] \lesssim h_K \| \nabla\resto\cdot\nabla \vh \|_{0,K} \lesssim h_K \| \resto \|_{1,\infty,K } \| \nabla \vh \|_{0,K},  	\end{multline*}
	and, using \eqref{splittingweight} as well as the inverse inequality \eqref{inverseVEM},
	\begin{multline*}
		\|  \resto \Delta \vh - \pim  (\resto \Delta \vh) \|_{-1,K} \lesssim h_K 	\|  \resto \Delta \vh - \pim  (\resto \Delta \vh) \|_{0,K} \lesssim h_K	\|  \resto \Delta \vh \|_{0,K} 
		\\[2mm]
	 \lesssim	h_K \| \resto \|_{0,\infty,K} \| \Delta \vh \|_{0,K} \lesssim h_K^2 \| \Delta \vh \|_{0,K} \lesssim h_K \| \nabla v_h\|_{0,K},
				\end{multline*}
	finally			yielding
		\begin{equation}\label{eq:comm3}		 \| \Delta(\resto \vh - \IK(\resto  \vh)) \|_{-1,K} \lesssim h_K \| \vh \|_{1,K}.\end{equation}
%
		
Using \eqref{eq:comm2} and \eqref{eq:comm3} in \eqref{comm0} yields
\begin{equation}\label{eq:comm4}
| \weight v_h - \IK (\weight v_h) |_{1,K} \lesssim h_K \| \vh \|_{1,K}.
\end{equation}

		\
		
		Let now $v \in H^{s+1}(K)$. Adding and subtracting $\PinablaK v$, using \eqref{interperror} and \eqref{eq:comm4}, then adding and subtracting $v$ and using the polynomial approximation bound \eqref{eq:polyapproxK}, we have	
		\begin{multline}\label{disccom2}
		| \weight (v+\vh) - \IK (\weight (v+\vh) ) |_{1,K} \\[2mm]
		\leq | \weight(v - \PinablaK v) - \IK(\weight(v - \PinablaK v)) |_{1,K} + | \weight(\PinablaK v + v_h) - \IK(\weight(\PinablaK v + v_h)) |_{1,K} \\[2mm] \lesssim 
		h_K | \weight (v - \PinablaK v) |_{2,K} + h_K \| \PinablaK v + v_h \|_{1,K} \lesssim 	h_K \| v - \PinablaK v\|_{2,K} + h_K \| \PinablaK v + v_h \|_{1,K}
		\\[2mm] \lesssim 	h_K \|  v - \PinablaK v \|_{2,K}  + h_K \| \PinablaK v - v \|_{1,K} + h_K \| v + \vh \|_{1,K}\\[2mm]
		\lesssim h_K^s | v |_{s+1,K} + h_K \| v + \vh \|_{1,K},
		\end{multline}
		which concludes the proof.
\end{proof}

\

As for all $w \in H^1(K)$ it holds that
\[
| w - \tPinablaK w |_{1,K}\, \blu{=} \inf_{w_h \in \VEMK} | w - w_h |_{1,K},
\]
we immediately have the following corollary.
\begin{corollary}\label{cor:disccom} For all $K \in \Tess$, for all $v \in H^{s+1}(K)$, $1 \leq s \leq k$, for all $v_h \in \VEMK$,  it holds 
	\begin{equation}
	\label{disccomtPinablaK}| \weight(v + \vh) - \tPinablaK(\weight(v+\vh)) |_{1,K} \lesssim  
	h_K^s | v |_{s+1} + h_K \| v + \vh \|_{1,K},
	\end{equation}
	the implicit constant in the inequality depending on $\| \weight \|_{2,\infty,\Omega}$.
\end{corollary}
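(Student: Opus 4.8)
The plan is to read off the corollary from Lemma \ref{lem:disccom} by using the quasi-optimality of $\tPinablaK$ in the $H^1(K)$ seminorm, which is the inequality displayed just above the statement. Indeed, by the defining orthogonality \eqref{deftPinablaK}, the gradient $\nabla\tPinablaK w$ is the $L^2(K)$-orthogonal projection of $\nabla w$ onto the subspace $\{\nabla w_h : w_h \in \VEMK\}$ of $(L^2(K))^2$, so that for every $w_h\in\VEMK$ one has the Pythagorean identity $| w-\tPinablaK w|_{1,K}^2 = \int_K \nabla(w-\tPinablaK w)\cdot\nabla(w-w_h) \le | w-\tPinablaK w|_{1,K}\,| w-w_h|_{1,K}$ (both $\tPinablaK w$ and $w_h$ being legitimate test functions in \eqref{deftPinablaK}), whence, dividing,
\[
| w - \tPinablaK w |_{1,K} \;\le\; \inf_{w_h \in \VEMK} | w - w_h |_{1,K}, \qquad \forall w \in H^1(K).
\]
The mean-value normalization in \eqref{deftPinablaK} is harmless here since it affects neither side (constants belong to $\VEMK$).

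I would then apply this with $w = \weight(v+\vh)$, which lies in $H^1(K)$ because $\weight\in C^\infty(\bar\Omega)$ and $v+\vh\in H^1(K)$, and bound the resulting infimum from above by evaluating it at the single admissible competitor $w_h = \IK(\weight(v+\vh))\in\VEMK$. This competitor is well defined: $v\in H^{s+1}(K)\subset H^2(K)$ with $s\ge 1$ gives $v|_{\bK}\in C^0(\bK)$ and $\Delta v\in L^2(K)$, while $\vh\in\VEMK$ gives $\vh|_{\bK}\in C^0(\bK)$ and $\Delta\vh\in\Poly{m}(K)$, so that $\weight(v+\vh)$ is continuous on $\bK$ with $L^2$ Laplacian, which is exactly what $\IK$ requires. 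Hence
\[
| \weight(v+\vh) - \tPinablaK(\weight(v+\vh)) |_{1,K} \;\le\; | \weight(v+\vh) - \IK(\weight(v+\vh)) |_{1,K},
\]
and the right-hand side is precisely the quantity estimated in \eqref{disccom1}. Invoking Lemma \ref{lem:disccom} gives the bound $h_K^s | v |_{s+1} + h_K\|v+\vh\|_{1,K}$, with an implicit constant depending on $\|\weight\|_{2,\infty,\Omega}$ through that of the lemma, which is \eqref{disccomtPinablaK}.

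I do not expect any genuine obstacle here: the corollary is a formal consequence of Lemma \ref{lem:disccom} together with the elementary best-approximation property of $\tPinablaK$, the only ingredient outside the lemma being the one-line orthogonality argument above, which is already asserted in the text preceding the statement. The only thing to be mildly careful about is to apply the lemma to the whole sum $\weight(v+\vh)$ at once (rather than to $\weight v$ and $\weight \vh$ separately), so that the hypotheses of Lemma \ref{lem:disccom} — $v\in H^{s+1}(K)$ with $1\le s\le k$ and $\vh\in\VEMK$ — are matched verbatim.
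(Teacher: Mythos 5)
Your proposal is correct and follows essentially the same route as the paper: the corollary is obtained by combining the $H^1$-seminorm quasi-optimality of $\tPinablaK$ (which the paper states just before the corollary and which you correctly justify via the orthogonality in \eqref{deftPinablaK}) with Lemma \ref{lem:disccom}, using $\IK(\weight(v+\vh))$ as the competitor. Your additional checks that $\IK$ is well defined on $\weight(v+\vh)$ and that the lemma must be applied to the sum $v+\vh$ as a whole are accurate but do not change the argument.
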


\

\begin{remark}
	The bound \eqref{eq:comm4}, valid for all $\vh \in \VEMK$, is the so called {\em discrete commutator property} of the Virtual Element space. It implies that $\IK : \weight \VEMK \to \VEMK$ is bounded in $H^1(K)$. In particular, we have
	\begin{equation}\label{stabilityIK}
	| \IK(\omega v_h) |_{1,K} \lesssim | v_h |_{1,K}, \qquad \forall v_h \in \VEMK .
	\end{equation}
\end{remark}

	\
	
	Another commutativity bound that will play a role later on,  is the following lemma.
		\begin{lemma}\label{lem:4.4} For all $K \in \Tess$  it holds, for all $v \in H^1(K)$, 
			\[
			| \QnablaK(\weight v) - \weight \QnablaK v |_{1,K} \lesssim h_K \| v \|_{1,K}
			\]
			the implicit constant in the inequality depending on $\| \weight \|_{2,\infty,\Omega}$.		
		\end{lemma}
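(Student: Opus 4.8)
The goal is to control the commutator $\QnablaK(\weight v) - \weight \QnablaK v$ in the $H^1(K)$ seminorm. First I would recall that $\QnablaK = \tPinablaK - \PinablaK$, so it suffices to handle separately $\tPinablaK(\weight v) - \weight \tPinablaK v$ and $\PinablaK(\weight v) - \weight \PinablaK v$ — or, more cleverly, to work directly with $\QnablaK v$ as an object in $\VEMK \cap \ker \PinablaK$ that is also average-free and whose norm is controlled, via Proposition \ref{boundQnablaK} (with $s=0$), by $\|QnablaK v\|_{0,K} + h_K|\QnablaK v|_{1,K} \lesssim h_K\|v\|_{1,K}$. The strategy is to insert $\QnablaK(\weight \QnablaK v)$ as an intermediate term and write
\[
\QnablaK(\weight v) - \weight \QnablaK v = \QnablaK\big(\weight(v - \QnablaK v)\big) + \big(\QnablaK(\weight\QnablaK v) - \weight \QnablaK v\big).
\]
Here $v - \QnablaK v = \tPinablaK v - \QnablaK v + (v - \tPinablaK v)$; since $\tPinablaK v - \QnablaK v = \PinablaK v \in \Poly k(K)$ and $\QnablaK$ annihilates polynomials, the first term reduces to $\QnablaK(\weight(v - \tPinablaK v))$, which I expect to be small because $v - \tPinablaK v$ is itself the best VEM approximation error.

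For the first term, $\QnablaK(\weight(v - \tPinablaK v))$: I would bound $|\QnablaK g|_{1,K} \lesssim \inf_{w_h}|g - w_h|_{1,K} \leq |g|_{1,K}$ (since $\QnablaK = \tPinablaK - \PinablaK$ and both are $H^1$-stable projections fixing constants), apply it with $g = \weight(v - \tPinablaK v)$, and then estimate $|\weight(v - \tPinablaK v)|_{1,K} \lesssim \|\weight\|_{1,\infty}\big(\|v - \tPinablaK v\|_{0,K}/h_K + |v - \tPinablaK v|_{1,K}\big)\cdot h_K$ — wait, more carefully: $|\weight w|_{1,K} \lesssim \|\weight\|_{0,\infty}|w|_{1,K} + \|\nabla\weight\|_{0,\infty}\|w\|_{0,K}$. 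Taking $w = v - \tPinablaK v$ and using the standard bound $\|v - \tPinablaK v\|_{0,K} \lesssim h_K|v - \tPinablaK v|_{1,K} \lesssim h_K\|v\|_{1,K}$ (Poincaré plus $H^1$-stability; this needs $v - \tPinablaK v$ to be average-free, which holds by definition of $\tPinablaK$), both pieces are $\lesssim h_K\|v\|_{1,K}$. Good.

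For the second term, $\QnablaK(\weight\QnablaK v) - \weight \QnablaK v$: write $\weight\QnablaK v = \bar\weight^K \QnablaK v + \resto \QnablaK v$ using the splitting \eqref{splittingweight}. Since $\QnablaK(\bar\weight^K \QnablaK v) = \bar\weight^K \QnablaK(\QnablaK v) = \bar\weight^K \QnablaK v$ (using $\QnablaK\circ\QnablaK = \QnablaK$ — this follows from $\tPinablaK\circ\QnablaK = \QnablaK$ and $\PinablaK\circ\QnablaK = 0$), the constant part cancels exactly and what remains is $\QnablaK(\resto \QnablaK v) - \resto \QnablaK v$, i.e.\ $-( \Id - \QnablaK)(\resto\QnablaK v)$ — hmm, $\QnablaK$ is not a projection onto a space containing $\resto\QnablaK v$, so I cannot call this a projection error directly. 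Instead I bound $|\QnablaK(\resto\QnablaK v)|_{1,K} \lesssim |\resto\QnablaK v|_{1,K}$ as above, and $|\resto\QnablaK v|_{1,K} \lesssim \|\resto\|_{0,\infty}|\QnablaK v|_{1,K} + \|\nabla\resto\|_{0,\infty}\|\QnablaK v\|_{0,K} \lesssim h_K|\QnablaK v|_{1,K} + \|\QnablaK v\|_{0,K} \lesssim h_K\|v\|_{1,K} + h_K\|v\|_{1,K}$ by Proposition \ref{boundQnablaK}. Combining the two terms gives the claim. The main obstacle is making sure the intermediate insertions genuinely collapse — i.e.\ getting the algebraic identities $\QnablaK\circ\QnablaK = \QnablaK$ and $\QnablaK q = 0$ for $q\in\Poly k(K)$ used correctly so that all constant and polynomial contributions cancel exactly rather than merely being "small," leaving only genuinely $O(h_K)$ remainders; everything else is routine Leibniz-rule and stability estimation.
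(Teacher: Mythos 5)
Your overall strategy (insert $\QnablaK v$, use the constant-plus-remainder splitting of $\weight$, exploit $\QnablaK\circ\QnablaK=\QnablaK$) is close in spirit to the paper's, and your treatment of the second term $\QnablaK(\weight\QnablaK v)-\weight\QnablaK v$ is correct. But the first term has a genuine gap. The reduction of $\QnablaK(\weight(v-\QnablaK v))$ to $\QnablaK(\weight(v-\tPinablaK v))$ is wrong: you discard $\QnablaK(\weight\,\PinablaK v)$ on the grounds that ``$\QnablaK$ annihilates polynomials,'' but $\weight\,\PinablaK v$ is \emph{not} a polynomial (only $\PinablaK v$ is), so this term does not vanish. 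It is in fact of size $O(h_K\|v\|_{1,K})$ --- exactly the size of the quantity being estimated --- so it cannot be dropped; it must be bounded, e.g.\ by writing $\QnablaK(\weight\,\PinablaK v)=\QnablaK(\resto\,\PinablaK v)=\QnablaK(\resto\,\PinablaK v-q')$ with $q'=\PinablaK(\resto\,\PinablaK v)$ and using \eqref{eq:polyapproxK} and \eqref{eq:polyinverse}; this is precisely the term $|\resto q-q'|_{1,K}$ in the paper's proof.

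The estimate you then give for $\QnablaK(\weight(v-\tPinablaK v))$ also does not close: after the Leibniz rule you obtain $\|\weight\|_{0,\infty}|v-\tPinablaK v|_{1,K}+\|\nabla\weight\|_{0,\infty}\|v-\tPinablaK v\|_{0,K}$, and while the second piece is $O(h_K|v|_{1,K})$ by Poincar\'e, the first piece is only $O(|v|_{1,K})$ for a general $v\in H^1(K)$ --- there is no factor $h_K$, yet you assert one. The missing $h_K$ has to come from replacing $\weight$ by $\resto$ (with $\|\resto\|_{0,\infty,K}\lesssim h_K$) \emph{before} applying the Leibniz rule, which is legitimate here because $\QnablaK\bigl(\bar\weight^K(v-\tPinablaK v)\bigr)=\bar\weight^K\QnablaK(v-\tPinablaK v)=0$ (indeed $\tPinablaK(v-\tPinablaK v)=0$ and $\PinablaK(v-\tPinablaK v)=0$). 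The paper sidesteps both pitfalls by performing the splitting $\weight=\bar\weight^K+\resto$ once and for all at the outset, so that every subsequent zeroth-order factor carries $\|\resto\|_{0,\infty,K}\lesssim h_K$. A minor further point: your intermediate claim $|\QnablaK g|_{1,K}\lesssim\inf_{w_h\in\VEMK}|g-w_h|_{1,K}$ is false ($\QnablaK$ does not vanish on non-polynomial functions of $\VEMK$), although the weaker consequence $|\QnablaK g|_{1,K}\lesssim|g|_{1,K}$ that you actually use is correct.
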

		
		\begin{proof}  Once again we use the splitting \eqref{splittingweight}, and, by the linearity of $\QnablaK$, we have
			\[
			\QnablaK(\weight v) - \omega \QnablaK v = \QnablaK(\resto v) - \resto \QnablaK v. 
			\]
			For $q, q'\in \mathbb{P}_k(K)$ arbitrary, as $\QnablaK q'=0$, we can write
			\[
			| \QnablaK(\resto v) |_{1,K} =  | \QnablaK(\resto (v-q) + \resto q - q') |_{1,K}   \lesssim | \resto (v - q) |_{1,K} + | \resto q - q' |_{1,K}.
			\]
			We now choose $q=\PinablaK v$ and $q'=\PinablaK(\rho q)$. With this choice, \blu{using Proposition \ref{prop:3.2}},  we can write
			\begin{multline*}	| \resto (v - q) |_{1,K} \lesssim \| \blu{\rho^K} \|_{0,\infty,K} | v - \PinablaK v |_{1,K}  + \| \blu{\rho^K} \|_{1,\infty,K} \| v - \PinablaK v \|_{0,K}\\[1.8mm] \lesssim 
		h_K | v - \PinablaK v |_{1,K}  +  \| v - \PinablaK v \|_{0,K}
				\lesssim h_K | v |_{1,K}, \end{multline*}
			as well as
			\begin{multline*}
			| \resto q - q' |_{1,K} = | \resto q - \PinablaK(\resto q) |_{1,K}  \lesssim h_K | \resto q |_{2,K} \lesssim h_K \| \resto \|_{0,\infty,K} | q |_{2,K} + h_K\| \resto \|_{2,\infty,K} \| q \|_{1,K}   \\[1.8mm]
			\lesssim h_K^2 | q |_{2,K} + h_K \| q \|_{1,K} \lesssim h_K \| q \|_{1,K} \lesssim h_K \| v \|_{1,K},
			\end{multline*}
			\blu{where we also used \eqref{eq:polyinverse}}. 
		\end{proof}	
	
	\

	The third commutativity property that we will need in the forthcoming analysis is stated in the lemma below.
	
	\begin{lemma}\label{lem:commDK} Let $\omega \in C^\infty(\bar\Omega)$ be a fixed weight function. Then, for all $K \in \Tess$, for all $w,v \in H^1(K)$ it holds that
		\begin{equation}\label{commDK}
	| \DK(w ,\weight v) - \DK (\weight w, v) |  \lesssim h_K \| v \|_{1,K} \| w \|_{1,K},
		\end{equation}
		the implicit constant in the inequality depending on $\| \weight \|_{2,\infty,\Omega}$.		
	\end{lemma}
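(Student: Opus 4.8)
The plan is to exploit the structure $\DK(w,v) = \int_K \nabla\QnablaK w\cdot\nabla\QnablaK v - s_h(\QnablaK w,\QnablaK v)$, write the difference $\DK(w,\weight v) - \DK(\weight w, v)$ as a sum of terms in which the weight $\weight$ has been ``pushed through'' the operator $\QnablaK$, and then invoke the commutator estimate of Lemma \ref{lem:4.4} together with the continuity bounds \eqref{conds1} for the stabilization and the boundedness of $\QnablaK$ in $H^1(K)$. Concretely, I would set $e_w = \QnablaK(\weight w) - \weight\QnablaK w$ and $e_v = \QnablaK(\weight v) - \weight\QnablaK v$, so that by Lemma \ref{lem:4.4} we have $|e_w|_{1,K}\lesssim h_K\|w\|_{1,K}$ and $|e_v|_{1,K}\lesssim h_K\|v\|_{1,K}$. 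Note also that $e_w, e_v \in \VEMK \cap \ker\PinablaK$ (since they are differences of elements of $\VEMK$ lying in $\ker\PinablaK$), so $s_h$ may legitimately be applied to them and \eqref{conds2} is available if needed.

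Next I would expand both the $\int_K\nabla\cdot\nabla$ part and the $s_h$ part bilinearly. For the gradient part, write
\[
\int_K \nabla\QnablaK w\cdot\nabla\QnablaK(\weight v) - \int_K\nabla\QnablaK(\weight w)\cdot\nabla\QnablaK v
= \int_K\nabla\QnablaK w\cdot\nabla(\weight\QnablaK v + e_v) - \int_K\nabla(\weight\QnablaK w + e_w)\cdot\nabla\QnablaK v.
\]
The two terms $\int_K\nabla\QnablaK w\cdot\nabla(\weight\QnablaK v)$ and $\int_K\nabla(\weight\QnablaK w)\cdot\nabla\QnablaK v$ do \emph{not} cancel directly; their difference is
$\int_K \bigl(\nabla\QnablaK w\cdot\QnablaK v - \QnablaK w\,\nabla\QnablaK v\bigr)\cdot\nabla\weight$, which is bounded by $\|\nabla\weight\|_{0,\infty,K}$ times $\|\QnablaK w\|_{0,K}|\QnablaK v|_{1,K} + |\QnablaK w|_{1,K}\|\QnablaK v\|_{0,K}$. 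Since $\QnablaK w$ and $\QnablaK v$ are average-free on $K$ (by the definition of $\QnablaK$), the Poincaré inequality \eqref{poincare} gives $\|\QnablaK w\|_{0,K}\lesssim h_K|\QnablaK w|_{1,K}\lesssim h_K\|w\|_{1,K}$, and similarly for $v$; hence this term is $\lesssim h_K\|w\|_{1,K}\|v\|_{1,K}$. The remaining terms, involving $e_v$ and $e_w$, are handled by Cauchy--Schwarz together with the $H^1$-boundedness of $\QnablaK$: $|\int_K\nabla\QnablaK w\cdot\nabla e_v|\le |\QnablaK w|_{1,K}|e_v|_{1,K}\lesssim \|w\|_{1,K}\cdot h_K\|v\|_{1,K}$, and symmetrically for the $e_w$ term. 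An identical bookkeeping applies to the stabilization part, using $s_h(\QnablaK w, \weight\QnablaK v + e_v)$ versus $s_h(\weight\QnablaK w + e_w, \QnablaK v)$: the ``symmetric'' pieces $s_h(\QnablaK w,\weight\QnablaK v)$ and $s_h(\weight\QnablaK w,\QnablaK v)$ are each bounded by \eqref{conds1} as $\lesssim\|\weight\|_{0,\infty,K}|\QnablaK w|_{1,K}|\QnablaK v|_{1,K}$, but this is $O(1)$, not $O(h_K)$, so they must be made to cancel — which they do not do exactly because $\weight\QnablaK v\notin\VEMK$ in general.

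This last point is the main obstacle: unlike the integral $\int_K\nabla\cdot\nabla$, the stabilization form $s_h$ is not given by an integral kernel, so there is no ``integration by parts / product rule'' trick to extract a factor of $h_K$ from $s_h(\QnablaK w,\weight\QnablaK v) - s_h(\weight\QnablaK w,\QnablaK v)$, and moreover $s_h$ is only defined on $\VEMK\times\VEMK$ while $\weight\QnablaK v$ leaves the space. The remedy I would use is to \emph{not} split the weight inside $s_h$ directly, but rather to write $\QnablaK(\weight w) = \tPinablaK(\weight w) - \PinablaK(\weight w)$ and similarly for $v$, and to compare $s_h(\QnablaK(\weight w),\QnablaK v)$ with $s_h(\QnablaK w,\QnablaK(\weight v))$ using the already-established commutator bound for $\tPinablaK$ from Corollary \ref{cor:disccom} (with $v\rightarrow 0$, i.e. applied to pure VEM functions): indeed $\tPinablaK(\weight\, \tPinablaK w) - \weight\,\tPinablaK w$ is controlled in $H^1$ by $h_K\|\tPinablaK w\|_{1,K}$, and a parallel statement for $\PinablaK$ follows from Lemma \ref{lem:4.4} together with $\QnablaK = \tPinablaK - \PinablaK$. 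Thus both $\QnablaK(\weight w)$ and $\weight\QnablaK w$ differ, in $H^1(K)$ and up to $O(h_K\|w\|_{1,K})$, from the same VEM function; substituting these approximations into $s_h$ and using \eqref{conds1} to absorb the error terms, while the leading (weight-times-$\QnablaK$) contributions to $s_h(\cdot,\QnablaK v)$ and $s_h(\QnablaK w,\cdot)$ now genuinely coincide, produces the claimed $O(h_K\|w\|_{1,K}\|v\|_{1,K})$ bound. Collecting the gradient-part estimate and the stabilization-part estimate via the triangle inequality on the definition of $\DK$ completes the proof.
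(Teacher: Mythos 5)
Your handling of the gradient part is correct, but there is a genuine gap in the stabilization part, and you have in fact identified it yourself: the two leading contributions $s_h(\QnablaK w,\weight\QnablaK v)$ and $s_h(\weight\QnablaK w,\QnablaK v)$ are each only $O(1)$ and must cancel, yet they do not. Your proposed remedy --- replacing $\weight\QnablaK w$ and $\weight\QnablaK v$ by nearby VEM functions $z_w$, $z_v$ and absorbing the $O(h_K\|w\|_{1,K})$ discrepancies via \eqref{conds1} --- does not close this gap: after the substitution you are left with $s_h(z_w,\QnablaK v)-s_h(\QnablaK w,z_v)$, again a difference of two $O(1)$ quantities, and for a stabilization subject only to \eqref{conds1}--\eqref{conds2} there is no reason these ``genuinely coincide''. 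The form $s_h$ is an abstract bilinear form with no product rule, no integral kernel, and no commutation property with multiplication by $\weight$; the asserted coincidence of the leading terms at the end of your argument is unjustified, and this is precisely the step that would fail.

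The missing idea is the splitting \eqref{splittingweight}: write $\weight|_K=\bar\weight^K+\resto$ with $\bar\weight^K$ constant and $\|\resto\|_{0,\infty,K}\lesssim h_K$. Since $\DK$ and $\QnablaK$ are (bi)linear, the constant commutes exactly, $\DK(w,\bar\weight^K v)=\bar\weight^K\DK(w,v)=\DK(\bar\weight^K w,v)$, so
\[
\DK(w,\weight v)-\DK(\weight w,v)=\DK(w,\resto v)-\DK(\resto w,v),
\]
i.e.\ the $O(1)$ parts cancel \emph{exactly, before any estimation}, and no cancellation between the two residual terms is needed: each is small on its own. Indeed, by Cauchy--Schwarz and \eqref{conds1} one has $|\DK(w,\resto v)|\lesssim|\QnablaK w|_{1,K}\,|\QnablaK(\resto v)|_{1,K}$ --- note that $\QnablaK(\resto v)\in\VEMK$, so $s_h$ is only ever evaluated on genuine VEM functions and the obstacle you ran into never arises --- and $|\QnablaK(\resto v)|_{1,K}\lesssim h_K\|v\|_{1,K}$ follows by adding and subtracting $\resto\,\QnablaK v$, using Lemma \ref{lem:4.4} for the commutator piece and $\|\resto\|_{0,\infty,K}\lesssim h_K$ together with Proposition \ref{boundQnablaK} for the remaining piece. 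This is the paper's argument; your Lemma \ref{lem:4.4} and the continuity bounds are the right tools, but they must be applied to the remainder $\resto$ rather than to the full weight $\weight$.
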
	

\begin{proof}	
Using once more the splitting \eqref{splittingweight}, we have
\begin{multline*}
| \DK(w ,\weight v) - \DK (\weight w, v) |  = |  \DK(w, \resto v) - \DK(\resto w , v)  | \\ \lesssim | \QnablaK w |_{1,K} |\QnablaK (\resto v) |_{1,K} + | \QnablaK (\resto w) |_{1,K} |\QnablaK v |_{1,K}.
\end{multline*}
Now, using Proposition \ref{boundQnablaK} and Lemma \ref{lem:4.4},   we have
\begin{multline*}
| \QnablaK (\resto w) |_{1,K} \leq | \resto \QnablaK w  |_{1,K} + |\QnablaK (\resto w) - \resto \QnablaK w |_{1,K} \\[1.8mm]
\lesssim \| \resto \|_{0,\infty} | \QnablaK w |_{1,K} + \| \resto \|_{1,\infty,K} \| \QnablaK w \|_{0,K} + h_K \| w \|_{1,K} \lesssim h_K \| w \|_{1,K}.
\end{multline*}
The above bound also applies to $v$, finally yielding \eqref{commDK}.
\end{proof}

		\section{Negative norm error estimates}
	The interior error estimate we aim at proving relies on the validity of different bounds on the error measured in negative norms, both at the global and at the local level. We devote this section to study such bounds. 
	
		\subsection{Error bounds in the $H^{-p}(\Omega)$ norm}\label{sec:negativeglobal}We start by considering the error $e = u - u_h$ measured in the  $H^{-p}(\Omega)$ norm.  We assume that  $u \in H^{r+1}(\O)$, $0 \leq r \leq k$  and let $\rho$, with  $0 \leq \rho \leq m+1$, be such that $f = - \Delta u \in H^\rho(\O)$. 
		As usual, resorting to a duality argument, in order to bound $\| e \|_{-p,\O}$,  $p \geq 0$, we write
		\begin{equation}
			\label{duality}
		\| e \|_{-p,\Omega} = \sup_{\phi\in H^p_0(\Omega)} \frac{\int_\Omega e \phi}{\| \phi \|_{p,\Omega}} =  \sup_{\phi\in H^p_0(\Omega)} \frac{\int_\Omega \nabla e \cdot \nabla  \vphi}{\| \phi \|_{p,\Omega}},
		\end{equation}
	where $\vphi \in H^1_0(\Omega)$ is the solution of 
		\begin{equation}\label{defvphi}
		-\Delta \vphi = \phi, \quad \text{ in }\Omega, \qquad \vphi = 0, \quad \text{ on }\partial \Omega.
		\end{equation}
Adding and subtracting $\vh \in \Vh$ arbitrary,	using \blu{\eqref{eq:conformity} and}
 \eqref{eq:error}, and adding and subtracting $\vphi$ we have that
		\begin{multline}
		\int_\Omega \nabla e \cdot \nabla  \vphi = \int_\Omega \nabla e \cdot \nabla  (\vphi - \vh) + \int_\Omega \nabla e \cdot \nabla  \vh \\
		 = 
		\int_\Omega \nabla e \cdot \nabla  (\vphi - \vh) + \int_{\Omega} \df  \vh - \Dh(u , \vh) + \Dh(e, \vh) \\
		=
		\int_\Omega \nabla e \cdot \nabla  (\vphi - \vh) + \int_{\Omega} \df  (\vh - \vphi  ) + \int_{\Omega}\df  \vphi \\ - \Dh(u , \vh - \vphi ) + \Dh(e, \vh - \vphi) - \Dh(u ,  \vphi ) + \Dh(e,  \vphi).\label{5.1.b}
		\end{multline}
	Estimates of the right hand side of \eqref{5.1.b} will, as usual, rely on the smoothness lifting properties of the Dirichlet problem \eqref{defvphi}. 
	 In the optimal case (e.g. when $\Omega$ is a square or a smooth domain) we will have $\vphi \in H^{p+2}(\O)$ \blu{(for the lifting property on squared domains see \cite[eqn. 7.16]{NS})}. However, this will not always be the case here, 
	as, on polygonal domains, depending on the interior angles, the 
	smoothness of $\phi$ implies the smoothness of $\vphi$ only up to a certain limit. In general (see \cite{grisvard2011elliptic}), we will have  that 
	\begin{equation}\label{lifting}
		\phi \in H^p_0(\Omega) \quad \text{implies} \quad
\| \vphi \|_{1+s,\Omega} \lesssim \| \phi \|_{p,\Omega}, \qquad\text{for some } s = s(p,\Omega) > 1/2.
\end{equation}
We then take $\vh = \fIh \vphi$ (as $s > 1/2$, $H^{1+s}(\Omega) \subseteq C^0(\bar\Omega)$ so that $\fIh \vphi$ is well defined). Using \eqref{interperror}, \eqref{errorDeltaK}  and \eqref{errorrhs} to bound the right hand side of \eqref{5.1.b} we obtain
		\begin{gather}\label{boundA}
		\int_\Omega \nabla e \cdot \nabla  \vphi 	\lesssim 
	 (| e |_{1,\Omega} + h^r | u |_{r+1,\Omega}) h^{\min\{s,k\}} | \vphi |_{s+1,\Omega}  + h^{\rho+\min\{s,m\} + 1} \| f \|_{\rho,\Omega} | \vphi |_{s+1,\Omega}.
		\end{gather}

		
		\NOTE{We have ($m \leq k$), with  $\sk = \min\{s,k\}$, $\sm=\min\{s,m\}$
			\begin{gather*}
				\int_\Omega \nabla e \cdot \nabla  (\vphi - \vh) + 	\int_\Omega  \Dh(e, \vh - \vphi) \leq | e |_{1,\Omega} | \vphi - \vh |_{1,\O} \lesssim  | e |_{1,\Omega} h^\sk | \vphi |_{1+s,\O}  \\[1mm]
				 \int_{\Omega} \df  (\vh - \vphi  ) \lesssim h^\rho | f |_{\rho,\Omega} h | \vh - \vphi |_{1,\Omega} \lesssim h^\rho | f |_{\rho,\Omega} h^{\sk + 1}| \vphi |_{1+s,\O} \lesssim  h^\rho | f |_{\rho,\Omega} h^{\sm + 1} | \vphi |_{1+s,\O}\\[1mm]
				 \int_{\Omega}\df  \vphi \lesssim h^\rho | f |_{\rho,\Omega} h^{\sm + 1} | \vphi |_{1+s,\O}\\[1mm]
					\int_\Omega  \Dh(u , \vh - \vphi ) \lesssim h^{r} | u |_{r+1,\O} | \vh - \vphi |_{1,\O} \lesssim h^{r} | u |_{r+1,\O} h^\sk |  \vphi |_{s+1,\O}\\[1mm]
				\int_\Omega 	 \Dh(u ,  \vphi ) \lesssim h^r | u |_{r+1,\O} h^\sk | \vphi |_{s+1,\O}\\[1mm]
				\int_\Omega 	 \Dh(e,  \vphi) \lesssim | e |_{1,\O} h^\sk | \vphi |_{s+1,\O}
			 			\end{gather*}
	and then we use the standard bound for the VEM error. Putting everything together yields
	\[
	\int \nabla e \cdot \nabla \vphi \lesssim  | e |_{1,\Omega} h^\sk | \vphi |_{1+s,\O} + h^\rho | f |_{\rho,\Omega} h^{\sm + 1} | \vphi |_{1+s,\O} + | \vphi |_{s+1,\O}
	\]
}
		Using \eqref{boundA} \blu{and \eqref{lifting}} in \eqref{duality}, and bounding $| e |_{1,\Omega}$ thanks to  \eqref{stimaerrorestandard},  we then obtain the following bound
		\begin{multline}
		\| e \|_{-p,\O} \lesssim  h^{\min\{s,k\}}\big(h^r | u |_{r+1,\O}+ | e |_{1,\O}\big) + h^{\min\{s,m\} + \rho + 1}  \blu{\| f \|_{\rho,\Omega}}  \\[1mm]
		\lesssim h^{\min\{s,k\} + r } | u |_{r+1,\O} + h^{\min\{s,m\} + \rho + 1} \blu{\| f \|_{\rho,\Omega}},
		\end{multline}
		where we used the fact that $m \leq k$.

		\
		
	It then remains to see which is the value of $s(p,\Omega)$, that is, what is the regularity that, depending on the characteristics of the domain $\Omega$, we can expect for $\vphi$ if $\phi \in H^{p}_0(\Omega)$. As already recalled, if $\Omega$ is smooth or a square, we will have that $\vphi \in H^{p+2}(\Omega)$, that is $s(p,\Omega) = p+1$. If, instead, $\Omega$ is a polygon  (\cite{grisvard2011elliptic}), we know that if $\phi \in H^{\sigma-1}(\Omega)$ then $\vphi \in H^{\sigma+1}(\Omega)$ and
	\[
	\| \vphi \|_{\sigma+1,\Omega} \lesssim \| \phi \|_{\sigma - 1,\Omega},
	\]
provided $ 0 < \sigma < \sigma_0 = \frac{\pi}{\theta_0}$, where
 $\theta_0 = \max_i \theta_i$, $\theta_i$, $i = 1,\cdots,L$ denoting the interior angles at the $L$ vertices of $\Omega$.
		Observe that we have that $\pi/3 \leq \theta_0 < 2\pi$, whence $1/2 < \sigma_0 \leq 3$.	For $\epsilon > 0$ arbitrarily small but fixed, we can then take $s=s(p,\Omega) = \min\{p+1,\sigma_0 - \varepsilon\}$ in \eqref{boundA}. 
			Setting 
		\[\g = \min\{p+1,k\} \qquad \text{ and }\qquad \s = \min \{p+1,m\}\] we then have 
		\begin{equation} \| e \|_{-p,\Omega}  \lesssim
	h^{\min\{\g,\sigma_0-\varepsilon\}+r} | u |_{r+1,\Omega}  +  h^{\rho + 1 + \min\{\s,\sigma_0-\varepsilon\} }\| f \|_{\rho,\Omega}.\label{negativepolygongeneral}
		\end{equation}
	
\NOTE{Setting $\rho = \max\{r - 1,0\}$
	Observe that for $p = k-1$ we have that $\g = k$ and  $\s = m$. Moreover, in the range of values we are interested in, we have that
\[
A := \min\{
\min\{k,\sigma_0-\varepsilon\}+r, \rho + 1 + \min\{m,\sigma_0-\varepsilon\} \}= \min\{
\sigma_0 - \varepsilon + r, m + \rho + 1
\} =: B .
\]
To see this, we start by observing that $s = \rho+1 \geq \max\{
r,1
\}$. We distinguish two cases: $r \in (0,1)$ (and then $s \geq 1$) and $r \geq 1$. With $\sigma = \sigma_0 - \varepsilon$
\begin{table}[htbp]
	\begin{tabular}{l||c|c|c|c||c|c|c|c}
		& \multicolumn{4}{c||}{$r \geq 1$}& \multicolumn{4}{c}{$r \in (0,1)$}\\
		$\sigma$ & $A$ & $B$  & $\min\{A,B\}$ & $\min\{C,D\}$ & $A$ & $B$ & $\min\{A,B\}$  & $\min\{C,D\}$ \\
		\hline
		$\sigma \leq m$ & C  & $\sigma$ +s &  C & C  & C&   $\sigma + s$ & C &C \\
		$m < \sigma \leq k$ & C& D& $\min\{C,D\}$ & $\min\{C,D\}$ & C & D& $\min\{C,D\}$ & $\min\{C,D\}$ \\
		$k < \sigma$ & $k + r$ & D & $D$ & $D$ & -- & -- & -- & --
	\end{tabular}
	\caption{
		$A = \min\{k,\sigma\} + r$, $B = \min\{
		m,\sigma
		\} + S$, $C = \sigma + r$, $D = \sigma + s$. Observe that if $\sigma > k$, $f \in L^2$ implies $u \in H^2$. Then, the case $\sigma > k$, $r \in (0,1)$ is irrelevant.}
\end{table}

}

\NOTE{
	For $p = k-1$, $r = \rho = 0$ we have 
	\[
	\| e \|_{1-k} \lesssim h^{\min\{k,\sigma_0-\varepsilon\}} | u |_{1,\Omega} + h^{\min\{m,\sigma_0-\varepsilon\} + 1} \| f \|_{0,\Omega}.
	\]	
	If $m = k-2$ the second (suboptimal) term dominates, thus the necessity of better approximation of the source term. 
}

			\subsection{Negative norm error estimates on smooth domains}\label{sec:negativesmooth} Before going on, let us consider what happens instead
			in  the case in which $\Omega$ is smooth. In such a case, for the solution of \eqref{defvphi}, we have that for all $p\geq 0$, $\phi \in H^p(\Omega)$ implies $\vphi \in H^{p+2}(\Omega)$. 
			We can take advantage of such a fact, provided that, for the discretization, we use a tessellation allowing curvilinear edges at the boundary of $\Omega$, and resort, for boundary adjacent elements, to the VEM with curved edges as introduced in \cite{VEM-curved}, following which,  we modify the definition of the space $\BoK$, which, for $K$ with $| \partial K \cap \partial \Omega | > 0$ becomes
			\[
			\BoK =  \{ v \in C^0(\bK):\ v|_e \in \mathbb{P}_k(e)\ \text{for all edge $e$ of $K$ interior to $\Omega$}, \ v|_{\bK \cap \partial\Omega} = 0 \}.
			\]

			We also modify	the interpolator $\fIh$ by requiring, for boundary adjacent elements, that $\fIh u = 0$ on $\partial K \cap \partial\Omega$. Using arguments similar to the ones used in Section \ref{sec:commutator} we can see that, also for boundary adjacent elements, for all $u \in H^1_0(\Omega)$ with $u|_K \in H^{1+t}(K)$, $t\geq 1$ it holds that
			\[
			| u - \fIh u |_{1,K} \lesssim h^{t} | u |_{1+t,K}.
			\]
			Then, letting, also for elements with a curved boundary edge, $\tPinablaK$ and $\QnablaK$ be defined by 
			\eqref{deftPinablaK} and \eqref{defQnablaK},
			we have, for $u \in H^1_0(\Omega)$ with $u|_K \in  H^{1+t}(K)$,
			\[
			| u - \tPinablaK u |_{1,K} \leq | u - \fIh u |_{1,K} \lesssim h^t | u |_{1+t,K}.
			\]
			Moreover (see \cite{VEM-curved}), under the same assumptions on $u$,  we have that 
			\begin{equation}\label{errorPinabla}
			| u - \PinablaK u |_{1,K}  \lesssim h^t | u |_{1+t,K}.
			\end{equation}
			Both Proposition \ref{boundQnablaK} and Proposition \ref{boundfurbo} also hold. Indeed, if $u \in H^{1+s}(K)$, $1\leq s \leq k$, with $u = 0$ on $\bK \cap \partial\Omega$ we have 
			\begin{gather*}
			\|  u - \tPinablaK u \|_{1,K} \leq 	| u - \tPinablaK u |_{1,K}  \lesssim | u |_{1,K}, \\	\| u - \tPinablaK u \|_{1,K} \leq 	| u - \tPinablaK u |_{1,K}  \lesssim h^s | u |_{1+s,K}, 
			\end{gather*}
			where we used a Poincar\'e inequality.
			By interpolation we have that \[
			\| u - \tPinablaK u \|_{1,K} \lesssim  h^s | u |_{1+s,K},  \qquad \forall s \text{ with } \blu{0 \leq s \leq k.}
			\]
			Proposition \ref{boundQnablaK} follows by a \blu{triangle } inequality. As its proof essentially relies on Proposition \ref{boundQnablaK}, Proposition \ref{boundfurbo} also follows.
			
			\
			
			Then, we can bound the right hand side of \eqref{5.1.b} as follows.
			As for $\Omega$ smooth $f \in L^2(\Omega)$ implies $u \in H^2(\Omega)$, we can always assume that $r \geq 1$ and that $\rho = r-1$. Moreover we can take $s = p+1$, so that we have
				\begin{equation}\label{negativesmoothbest}
				\| e \|_{-p,\O} 
				\lesssim \big(h^{\min\{p+1,k\} + r } + h^{\min\{p+1,m\} + r}\big) | u |_{r+1,\O}  \lesssim h^{\min\{p+1,m\} + r}| u |_{r+1,\O}.
				\end{equation}
			
			\begin{remark}
				We recall that if $\Omega$ is a square, the same smoothness result holds for the solution $\vphi$ of problem \eqref{defvphi}, as in the case of smooth domains, namely, if $\phi \in H^p_0(\Omega)$ it holds that $\vphi \in H^{p+2}(\Omega)$ and
				$\| \vphi \|_{p+2,\Omega} \lesssim \| \phi \|_{p,\Omega}$.
				Then, also in such a case we have that \eqref{negativesmoothbest} holds.
			\end{remark}

\subsection{Local  negative norm error estimates} \label{sec:negativelocal}

For $\Omega$ being either a polygonal domain, or a smooth domain discretized by elements where, exclusively for elements adjacent to the boundary, curved edges are allowed, we now prove some bounds on the local error, measured in negative norms. We start by proving the following lemma.

\begin{lemma}\label{lem:6b} Let $\G_0 \subsubset \Guno \subsubset \Omega$ be  fixed smooth interior subdomains of  $\Omega$, and let the solution $u$ to Problem \eqref{Pb:strong} satisfy $ u \in H^{t+1}(\Guno)$, with $1 \leq t \leq k$. Let $u_h \in \Vh$ denote the solution to Problem \eqref{pb:discrete}.  Then, there exists an $h_0$ such that, if $h < h_0$ we have, for $p \geq 0$ integer, 
	\[
	\| u - u_h \|_{-p,\Omega_0} \lesssim h^\g | u - u_h |_{1,\Guno} + \| u - u_h \|_{-p-1,\Guno} +  h^{\s + t} |  u |_{t+1,\Guno},	\] 
with $\g = \min\{ p+1,k\}$, and $\s = \min\{p+1,m\}$.
\end{lemma}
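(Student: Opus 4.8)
The plan is to run the standard Nitsche–Schatz duality localization argument, adapted to the VEM setting by using the error equations \eqref{eq:error} and \eqref{othererrorequation} together with the commutator lemmas of Section \ref{sec:commutator}. First I would fix a smooth cutoff $\weight \in C^\infty_0(\Guno)$ with $\weight \equiv 1$ on a neighborhood of $\G_0$, so that $\| u - u_h \|_{-p,\G_0} \le \| \weight(u-u_h) \|_{-p,\Omega}$, and estimate the latter by duality: for $\phi \in H^p_0(\Omega)$ let $\vphi$ solve $-\Delta \vphi = \phi$, so that, writing $e = u-u_h$,
\[
\int_\Omega \weight e\, \phi = \int_\Omega \nabla(\weight e)\cdot\nabla\vphi = \int_\Omega \nabla e \cdot \nabla(\weight\vphi) + \int_\Omega e\,(\nabla\weight\cdot\nabla\vphi) - \int_\Omega \nabla e\cdot(\vphi\nabla\weight).
\]
The last two terms involve $e$ against quantities supported in $\Guno$ and one degree smoother, so after an integration by parts they are controlled by $\| e \|_{-p-1,\Guno}$ (plus, for the middle term, $|e|_{1,\Guno}$ times an $O(h)$ factor if one prefers — but keeping it in negative norm is cleaner). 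The genuinely new work is the first term $\int_\Omega \nabla e \cdot \nabla(\weight\vphi)$.

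To handle $\int_\Omega \nabla e \cdot \nabla(\weight\vphi)$ I would introduce $\vh = \fIh(\weight\vphi) \in \Vh$ (well-defined since $\weight\vphi$ vanishes near $\partial\Omega$ and, by elliptic regularity, $\weight\vphi \in H^{1+s}$ with $s = \min\{p+1,\sigma_0-\varepsilon\}$ or $s=p+1$ in the smooth case) and split
\[
\int_\Omega \nabla e \cdot \nabla(\weight\vphi) = \int_\Omega \nabla e \cdot \nabla(\weight\vphi - \vh) + \int_\Omega \nabla e \cdot \nabla \vh.
\]
For the first piece, $\weight\vphi - \vh$ is again supported in $\Guno$, and by the interpolation/commutator estimates \eqref{interperror}, \eqref{disccomtPinablaK} one has $|\weight\vphi - \vh|_{1,\Guno} \lesssim h^{\min\{s,k\}} |\vphi|_{1+s,\Guno} \lesssim h^\g \|\phi\|_{p,\Omega}$; pairing with $|e|_{1,\Guno}$ gives the $h^\g |e|_{1,\Guno}$ term. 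For the second piece I use the error equation \eqref{othererrorequation}: $\int_\Omega \nabla e\cdot\nabla\vh = a(e,\vh) = \Dh(e,\vh) + \int_\Omega \df\, \vh - \Dh(u,\vh)$. Here $\vh$ is supported in $\Guno$ (its VEM dofs vanish outside), so all three sums run only over elements $K \subset \Guno$ (or meeting $\Guno$). The term $\int_\Omega \df\,\vh$ is bounded by Proposition \ref{boundrhs} (using $f = -\Delta u \in H^{t-1}(\Guno)$) by $h^{\s+t}|u|_{t+1,\Guno}$ after summing; the term $\Dh(u,\vh)$ is bounded by Proposition \ref{boundfurbo} by $h^{\min\{s,m\}+t}|u|_{t+1,\Guno}\cdot\big(\text{stability of }\vh\big) \lesssim h^{\s+t}|u|_{t+1,\Guno}$; and $\Dh(e,\vh)$ is bounded using \eqref{errorDeltaK} by $\sum_K h_K^{s_K+t_K}\cdots$, which, choosing the right powers, contributes a term of the form $h^\g |e|_{1,\Guno}$ (after using $h\|\vh\|_{1,K}\lesssim\ldots$) — this reuses the stability bound \eqref{stabilityIK} on $\fIh$.

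The main obstacle — and the reason the commutator machinery of Section \ref{sec:commutator} was developed — is that $\vh = \fIh(\weight\vphi)$ is \emph{not} a product of $\weight$ with a discrete function, so one cannot directly ``move $\weight$ off of $e$''. The key trick is to compare $\fIh(\weight\vphi)$ with $\weight\, \fIh\vphi$ via the discrete commutator estimate: $|\fIh(\weight\vphi) - \weight\,\fIh(\vphi)|_{1,\Guno} \lesssim h^{\min\{s,k\}}|\vphi|_{1+s} + h\|\vphi\|_{1}$, so that $\vh = \weight\wh + (\text{small remainder})$ with $\wh = \fIh\vphi \in \Vh$ and $\|\text{remainder}\|_{1}$ of order $h^\g\|\phi\|_{p}$. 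For the $\weight\wh$ part one then needs Lemma \ref{lem:4.4} and Lemma \ref{lem:commDK} to commute $\weight$ through $\Qnabla$ and $\Dh$ in the expressions $\Dh(e,\weight\wh)$ and $\Dh(u,\weight\wh)$, transferring the weight onto the (smoother) argument $u$ or onto $e$ at the cost of $O(h)$ terms; there the factor $h^\g$ (resp. $h^{\s+t}$) is recovered from the approximation orders of $\wh$ against $\vphi \in H^{1+s}$. Once $s$ is inserted according to the regularity of the Dirichlet problem — $s = \min\{p+1,\sigma_0-\varepsilon\}$ on a polygon (whence $\min\{s,k\}\ge\g$ for $h$ small as in \eqref{negativepolygongeneral}), $s=p+1$ in the smooth/curved case — collecting all contributions over $K \subset \Guno$ and dividing by $\|\phi\|_{p,\Omega}$ yields the claimed bound, for $h$ below some $h_0$ depending on $\mathrm{dist}(\G_0,\partial\Guno)$ through the cutoff $\weight$.
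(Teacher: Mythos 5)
Your overall architecture (cutoff, duality, add/subtract $\fIh(\weight\vphi)$, error equation, five terms) is the same as the paper's, but there is one genuine gap: you pose the dual problem on $\Omega$, taking $\phi\in H^p_0(\Omega)$ and $-\Delta\vphi=\phi$ in $\Omega$, and consequently you only get $\vphi\in H^{1+s}(\Omega)$ with $s=\min\{p+1,\sigma_0-\varepsilon\}$ when $\Omega$ is a polygon. Your parenthetical claim that then ``$\min\{s,k\}\ge\g$'' is false: for an L--shaped domain $\sigma_0-\varepsilon<2/3$, so $\min\{s,k\}<1\le\g=\min\{p+1,k\}$, and your bound for the term $\int\nabla e\cdot\nabla(\weight\vphi-\vh)$ degenerates to $h^{\sigma_0-\varepsilon}|e|_{1,\Guno}$, which does not prove the lemma. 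The same lack of regularity breaks your treatment of the commutator terms $\int e\,[\nabla\weight\cdot\nabla\vphi+\nabla\cdot(\vphi\nabla\weight)]$: to pair $e$ in $H^{-p-1}(\Guno)$ you need $\nabla\weight\cdot\nabla\vphi\in H^{p+1}$, i.e.\ $\vphi\in H^{p+2}$ near $\supp\weight$, which global polygonal regularity does not give. This is exactly why the lemma assumes $\Guno$ \emph{smooth}: the paper takes $\phi\in H^p_0(\Guno)$ and solves the dual problem on $\Guno$ itself, so that $\|\vphi\|_{p+2,\Guno}\lesssim\|\phi\|_{p,\Guno}$ holds unconditionally and the full rates $h^\g$, $h^{\s+t}$ come out. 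Your argument could be repaired either by switching to the dual problem on $\Guno$, or by invoking interior elliptic regularity for $\vphi$ on $\G'\subsubset\Omega$ (since $\weight\vphi$ is supported well inside $\Omega$), but as written the step is wrong.

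A secondary remark: the detour through $\weight\,\fIh\vphi$ and the commutator Lemmas \ref{lem:4.4} and \ref{lem:commDK} is not needed here. The paper handles $\Dh(e,\fIh(\weight\vphi))$ and $\Dh(u,\fIh(\weight\vphi))$ simply by adding and subtracting $\weight\vphi$ inside the second argument, using continuity of $\Dh$ together with \eqref{interperror} for the interpolation remainder and Proposition \ref{boundfurbo} with the smooth function $\weight\vphi\in H^{p+2}$ for the main part; no commutation of $\weight$ through $\Qnabla$ or $\Dh$ is required in this lemma (those lemmas are used later, in the proof of Lemma \ref{lem:8}). Your sketch of $\Dh(e,\vh)$ also needs this add/subtract to be made explicit: applying \eqref{errorDeltaK} directly to $\vh$ with $s=t=0$ yields no positive power of $h$.
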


\begin{proof}
	Let $\G'$ with $\G_0 \subsubset \G' \subsubset \Guno$, be a fixed intermediate subdomain between $\G_0$ and $\Guno$,  and let $\weight \in C^{\infty}_0(\G')$ with $\weight = 1$ in $\G_0$.
		We let $h_0$ be such that for all $h < h_0$, all elements $K \in \Tess$ with $K \cap \G' \not= \emptyset$ satisfy $K \subset \overline{\Omega}_1$, and we let $h < h_0$.		
	 Letting $e = u-u_h$, we have
	\[
	\| e \|_{-p,\G_0} \leq \| \weight e \|_{-p,\Guno} = \sup_{\phi\in H^p_0(\Guno)} \frac {\int_\Guno \weight e \phi} {\| \phi \|_{p,\Guno} }	= \sup_{\phi\in H^p_0(\Guno)} \frac {\int_\Guno \nabla \weight e \cdot \nabla \vphi} {\| \phi \|_{p,\Guno} },
	\]
	where $\vphi$ is the solution to 
	\[
	-\Delta \vphi = \phi, \quad \text{ in }\Guno, \qquad \vphi = 0, \quad \text{ on } \partial \Guno.
	\]
	Observe that, as we assumed that $\Guno$ is smooth, $\phi \in H^p(\Guno)$  implies $\vphi \in H^{p+2}(\Guno)$, with
	\begin{equation}\label{eq:vphismooth}
	\| \vphi \|_{p+2,\Guno} \lesssim \| \phi \|_{p,\Guno}.
	\end{equation}
Using the error equation  \eqref{othererrorequation} for $\vh \in \Vh$ with $\supp \vh \subseteq \overline{\G}_1$ arbitrary, we can write
	\begin{multline}\label{eq:int1}
	\int_\Guno \nabla(\weight e) \cdot \nabla \vphi = \int_\Guno \nabla e \cdot \nabla(\weight \vphi) + \int_\Guno e [
	\nabla \weight \cdot \nabla \vphi + \nabla \cdot (\vphi\nabla \weight)
	] \\= \int_\Guno \nabla e\cdot \nabla (\weight \vphi - \vh) + \int_\Guno \nabla e \cdot \nabla \vh +  \int_\Guno e [
	\nabla \weight \cdot \nabla \vphi + \nabla \cdot (\vphi\nabla \weight)]\\
	 = \int_\Guno \nabla e\cdot \nabla (\weight \vphi - \vh) + \Dh(e,\vh) + \int_\Guno \df \vh - \Dh(u,\vh) +   \int_\Guno e [
	 \nabla \weight \cdot \nabla \vphi + \nabla \cdot (\vphi\nabla \weight)]\\
	 = I + II + III + IV + V.
	\end{multline}
	We now let $\vh = \fIh (\weight \vphi)$ and we remark that, as $\supp(\weight \vphi) \subset \G' \subsubset \Guno$, if $h < h_0$, then $\supp(\vh) \subset \overline{\G}_1$. We bound the five terms on the right hand side of \eqref{eq:int1} separately. Using \eqref{interperror} we can write, with $\g = \min\{p+1,k \}$
	\begin{multline}
	I = \int_\Guno \nabla e\cdot \nabla (\weight \vphi - \fIh(\weight \vphi )) \leq | e |_{1,\Guno} | \weight \vphi - \fIh(\weight \vphi ) |_{1,\Guno}
\\	 \lesssim | e |_{1,\Guno}\, h^\g | \weight \vphi |_{p+2,\Guno} \lesssim |  e |_{1,\Guno}\, h^\g \|  \vphi \|_{p+2,\Guno}\lesssim h^\g | e |_{1,\Guno} \| \phi \|_{p,\Guno}.
	\end{multline}
	Adding and subtracting $\weight \vphi$ and using \eqref{interperror} and \eqref{errorDeltaK} we have
	\begin{multline*}
	II =   \Dh(e,\fIh (\weight \vphi)) =  \Dh ( e , \fIh (\weight \vphi) - \weight \vphi) + \Dh(e,\weight\vphi)\\[2mm]
\lesssim 	| e |_{1,\Guno} | \fIh (\weight \vphi) - \weight \vphi |_{1,\Guno} + h^{\g}_K | e |_{1,\Guno} | \weight \vphi |_{p+2,\Guno} \\[1.8mm]
	 \lesssim h^\g| e |_{1,\Guno} | \weight \vphi |_{p+2,\Guno}  \lesssim h^\g | e |_{1,\Guno} \|  \vphi \|_{p+2,\Guno} \lesssim  h^\g | e |_{1,\Guno} \| \phi \|_{p,\Guno}.	
	\end{multline*}
Moreover, using the fact that $f - \Pi^0_m f$ is orthogonal to $\Pstar{m} \ni \Pi^0_m(\fIh (\weight \vphi))$, we can write, with $\G_h \subseteq \Guno$ denoting the union of elements $K \in \Tess$ such with $\fIh(\weight \vphi) \not = 0$ in $K$,
		\begin{multline}
III = \int_\Guno \df \fIh(\weight \vphi) =
\int_{\G_h} (f - \Pi^0_m f) (\fIh (\weight \vphi) - \Pi_m^0 \fIh( \weight \vphi))
 \\ \lesssim  
\|  f - \Pi^0_m f \|_{0,\G_h} \left(\| (\uno-\Pi_m^0)(\fIh (\weight \vphi) - \weight \vphi)  \|_{0,\Guno} + \|  \weight \vphi - \Pi_m^0  (\weight \vphi)\|_{0,\Guno} \right)\\[1.8mm]
\lesssim \|  f - \Pi^0_m f \|_{0,\G_h} \left(\| \fIh (\weight \vphi) - \weight \vphi  \|_{0,\Guno} + \|  \weight \vphi - \Pi_m^0  (\weight \vphi)\|_{0,\Guno} \right)
\end{multline}
where we added and subtracted $\weight\vphi - \Pi^0_m(\weight \vphi)$.
Using \eqref{eq:polyapproxK} and \eqref{interperror} gives 
 \begin{multline}\label{eq:5.13}
III		\lesssim h^{t-1} | f |_{t-1,\G_h} \left(
		h \| \fIh (\weight \vphi)  - \weight \vphi \|_{1,\Guno} + h^{\s +1} \| \weight \vphi \|_{p+2,\Guno}
		\right)\\[1.8mm]
		\lesssim h^{t-1} | f |_{t-1,\Guno} \left(
		h^{\g+1} \| \weight \vphi \|_{p+2,\Guno} + h^{\s+1} \| \weight \vphi \|_{p+2,\Guno}
		\right)	 \lesssim h^{t + \s} | f |_{t-1,\Guno} \| \phi \|_{p,\Guno}
		\end{multline}	
(we recall that $f = -\Delta u$ so, under our assumptions, we have that $f \in H^{t-1}(\Guno)$). By once again adding and subtracting $\weight \vphi$ and using \eqref{eq:comm1} and \eqref{errorDeltaK} we have
	\begin{equation*}
		IV = - \Dh(u,\vh) \lesssim h^{t+\g} | u |_{t+1,\Guno} \| \weight \vphi \|_{p+2,\Guno} \lesssim h^{t+\g} | u |_{t+1,\Guno} \| \phi \|_{p,\Guno}.
	\end{equation*}
Finally, we bound $V$ as in \cite{NS} as
\begin{multline*}
	V = \int_\Guno e [
	\nabla \weight \cdot \nabla \vphi + \nabla \cdot (\vphi\nabla \weight)]  \lesssim \| e \|_{-p-1,\Guno} \|  \nabla \weight \cdot \nabla \vphi + \nabla \cdot (\vphi\nabla \weight) \|_{p+1,\Guno} \\
	\lesssim \| e \|_{-p-1,\Guno} \| \vphi \|_{p+2,\Guno} \lesssim \| e \|_{-p-1,\Guno} \| \phi \|_{p,\Guno}.
\end{multline*}
The thesis follows from the observation that $\s \leq \g$ and $\| f \|_{t-1,\Guno} \lesssim \| u \|_{t+1,\Guno}$.
	\end{proof}
	
	\begin{remark} We observe that, as $f \in L^2(\Omega)$, we have that $u|_{\G_1} \in H^2(\G_1)$ for all $\G_1 \subsubset \Omega$. Then the assumptions of Lemma \ref{lem:6b} are always satisfied for some $t \geq 1$.
	\end{remark}
	
	\begin{remark}
	In the special case in which the right hand side of \eqref{Pb:weak} is computable for all $\vh \in V_h$ with $\supp v \subseteq \bar\Omega_1$ (this happens if $f|_{\Guno}$ is a polynomial of degree at most $m$) then the term $III$ in the sum at the right hand side of \eqref{eq:int1} vanishes. In such a case, we have a better estimate, namely
		\[
		\| u - u_h \|_{-p,\Omega_0} \lesssim h^\g | u - u_h |_{1,\Guno} + \| u - u_h \|_{-p-1,\Guno} +  h^{\g  + t} | u |_{t+1,\Guno}.	\] 
	\end{remark}
	
	
	\

A recursive application of Lemma \ref{lem:6b} yields the following lemma.	
	\begin{lemma}\label{lem:7b}
Under the assumption of Lemma \ref{lem:6b}, for $p > 0$ arbitrary integer, there exists $h_0 > 0$ such that, provided $h<h_0$ we have
		\[
		\| e \|_{0,\G_0} \lesssim h | e |_{1,\Guno} + h^{t} | u |_{t+1,\Guno} + \| e \|_{-p,\Guno}.
		\]
	\end{lemma}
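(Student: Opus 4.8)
The plan is to apply Lemma~\ref{lem:6b} recursively, peeling off one unit of negative regularity per step while slightly shrinking the subdomain at each step, for a total of $p$ steps.

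First, since $\G_0 \subsubset \Guno$, I would fix a finite chain of smooth interior subdomains
\[
\G_0 = \G^{(0)} \subsubset \G^{(1)} \subsubset \cdots \subsubset \G^{(p)} = \Guno
\]
(such a chain clearly exists, e.g.\ as smooth sublevel sets of a mollification of the distance to $\G_0$). Because $u \in H^{t+1}(\Guno)$ we have $u \in H^{t+1}(\G^{(j)})$ for every $j$, so Lemma~\ref{lem:6b} applies to each pair $(\G^{(j-1)},\G^{(j)})$ with negative index $j-1$; let $h_0^{(j)}$ denote the threshold it provides. Since $p$ is fixed, I would set $h_0 = \min\{1,h_0^{(1)},\dots,h_0^{(p)}\}$ and assume $h < h_0$ from now on.

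Next I iterate the estimate. Applying Lemma~\ref{lem:6b} to $\|e\|_{0,\G^{(0)}}$ with index $p=0$ gives $\|e\|_{0,\G_0} \lesssim h\,|e|_{1,\G^{(1)}} + \|e\|_{-1,\G^{(1)}} + h^{t}\,|u|_{t+1,\G^{(1)}}$ (here $\g=\min\{1,k\}=1$ and $\min\{1,m\}+t\ge t$); applying it to $\|e\|_{-1,\G^{(1)}}$ with index $p=1$ bounds the middle term by $h^{\min\{2,k\}}|e|_{1,\G^{(2)}} + \|e\|_{-2,\G^{(2)}} + h^{\min\{2,m\}+t}|u|_{t+1,\G^{(2)}}$; and so on. After $p$ substitutions,
\[
\|e\|_{0,\G_0} \lesssim \sum_{j=1}^{p} h^{\min\{j,k\}}\,|e|_{1,\G^{(j)}} + \sum_{j=1}^{p} h^{\min\{j,m\}+t}\,|u|_{t+1,\G^{(j)}} + \|e\|_{-p,\G^{(p)}},
\]
the implicit constant being the product of the $p$ constants coming from Lemma~\ref{lem:6b} (harmless, since $p$ is fixed). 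To conclude I would use that $k\ge 1$, so $\min\{j,k\}\ge 1$ and hence $h^{\min\{j,k\}}\le h$ for $h\le 1$; that $\min\{j,m\}\ge 0$, so $h^{\min\{j,m\}+t}\le h^{t}$; and that $\G^{(j)}\subseteq\Guno$, so $|e|_{1,\G^{(j)}}\le|e|_{1,\Guno}$ and $|u|_{t+1,\G^{(j)}}\le|u|_{t+1,\Guno}$. Together with $\G^{(p)}=\Guno$, the right-hand side collapses to $h\,|e|_{1,\Guno} + h^{t}\,|u|_{t+1,\Guno} + \|e\|_{-p,\Guno}$, which is the assertion.

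There is no genuinely difficult step here; the argument is pure bookkeeping. The two points to be careful about are that the intermediate domains can be chosen smooth and properly nested, so Lemma~\ref{lem:6b} is applicable at every level, and that the exponents $\min\{j,k\}$ and $\min\{j,m\}+t$ produced at the successive levels are all $\ge 1$ and $\ge t$ respectively, so that after using $h<h_0\le 1$ they are dominated by the single terms $h\,|e|_{1,\Guno}$ and $h^{t}\,|u|_{t+1,\Guno}$. One should also note that $h_0$ and the implicit constant depend on $p$ and on the chosen chain of subdomains.
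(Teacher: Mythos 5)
Your proposal is correct and follows essentially the same route as the paper: a chain of $p$ nested smooth intermediate subdomains between $\G_0$ and $\Guno$, a recursive application of Lemma~\ref{lem:6b} with negative index increasing by one at each step, and the final observation that the exponents $\min\{j,k\}\ge 1$ and $\min\{j,m\}\ge 0$ let the accumulated sums be absorbed into $h\,|e|_{1,\Guno}$ and $h^{t}\,|u|_{t+1,\Guno}$. The only difference is a harmless shift of indices, so there is nothing to add.
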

\begin{proof} Let $\hOmega_\ell$, $\ell = 0,\cdots,p$ be an increasing sequence of intermediate subdomains with
	\(	\G_0 = \hOmega_0 \subsubset \hOmega_1 \subsubset \cdots \subsubset \hOmega_p = \Guno
	\). By Lemma \ref{lem:6b}, for $\ell  = 0,\cdots,p-1$, there exists $h_{0,\ell}$ such that, provided $h < h_{0,\ell}$, 
 it holds that
 \[
 \| u - u_h \|_{-\ell,\hOmega_\ell} \lesssim h^{\g_\ell} \| u - u_h \|_{1,\hOmega_{\ell+1}} + \| u - \uh \|_{-\ell-1,\hOmega_{\ell+1}} + h^{\s_\ell+t} | u |_{t+1,\hOmega_{\ell+1}}
 \]	
 where $\g_\ell = \min\{\ell+1,k\}$, $\s_\ell = \min\{\ell+1,m\}$. Then, if $h < h_0 = \min_{\ell} h_{0,\ell}$ we can write
 \begin{multline}
 \| u - u_h \|_{0,\hOmega_0} \lesssim \| u - \uh \|_{-1,\hOmega_{1}} + h^{\g_0} \| u - u_h \|_{1,\hOmega_{1}} + h^{\s_0+t} | u |_{t+1,\hOmega_{1}}\\[1.8mm]
\lesssim \| u - \uh \|_{-2,\hOmega_{2}} + (h^{\g_1} + h^{\g_0})\| u - u_h \|_{1,\hOmega_{2}} + h^t (h^{\s_1} + h^{\s_0}) | u |_{t+1,\hOmega_{2}}\\
\lesssim \cdots 
\lesssim \| u - \uh \|_{-p,\hOmega_{p}} + (\sum_{\ell=0}^{p-1} h^{\g_\ell})\| u - u_h \|_{1,\hOmega_{p}} + h^t (\sum_{\ell=0}^{p-1} h^{\s_\ell}) | u |_{t+1,\hOmega_{p}}.
 \end{multline}
 	We conclude by remarking that, since $\g_\ell = \min \{\ell+1,k \} \geq 1$ and $\s_\ell = \min\{\ell+1,m\} \geq 0$, it holds that
 	\[
 	\sum_{\ell=0}^{p-1} h^{\g_\ell} \lesssim h, \qquad 	\sum_{\ell=0}^{p-1} h^{\s_\ell} \lesssim 1.
 	\]
 	the implicit constant in the inequality depending on $p$.
	\end{proof}

	\section{Interior error estimate}\label{sec:main}
	
	We can now prove the main result of this paper, stating that the local error in $H^1(\G_0)$, $\G_0  \subsubset \Omega$ is bounded by a term of the maximum order allowed by the  smoothness of $u$ in $\Guno$, with $\G_0 \subsubset \Guno \subsubset \Omega$, plus the global error measured in a weaker negative norm.

\begin{theorem}\label{thm:main}
	Let $\G_0 \subsubset \Guno \subsubset \Omega$ and let $u$ and $u_h$ denote 
	the solution to \eqref{Pb:weak} and \eqref{pb:discrete}, respectively. Assume that $u|_\Guno \in H^{1+t}(\Guno)$, $1 \leq t \leq k$. Then, for $p \geq 0$ arbitrary, there exists $h_0$ such that, provided $h<h_0$, it holds that
	\begin{equation}\label{boundcor8b}
	\| e \|_{1,\G_0} \lesssim h^t( | u |_{t+1,\Guno} +  \| u \|_{1,\Omega} + h \| f \|_{0,\Omega}) + \| e \|_{-p,\Omega}.
	\end{equation}
\end{theorem}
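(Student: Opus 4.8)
The plan is to run the classical Nitsche--Schatz iteration, but carried out on the virtual element space with the commutator machinery of Section \ref{sec:commutator}. Fix an increasing chain of smooth interior subdomains $\G_0 \subsubset \G' \subsubset \G'' \subsubset \Guno$, pick a cut-off $\weight \in C^\infty_0(\G')$ with $\weight \equiv 1$ on $\G_0$, and choose $h_0$ small enough that any element meeting $\supp\weight$ lies in $\overline{\G}''$. Since $\weight e = e$ on $\G_0$, it suffices to bound $|\weight e|_{1,\Guno}$, or rather (using \eqref{coerch} localized to the relevant elements together with \eqref{stabtermcondition}) the quantity $a_h(\tPinabla(\weight e),\tPinabla(\weight e))$ plus $\sum_K |(\Id-\tPinabla)(\weight e)|^2_{1,K}$. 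The strategy is: take $\wh = \fIh(\weight \, \tPinabla(\weight e))\in\Vh$ (supported in $\overline{\G}''$), test the error equation \eqref{eq:error} with $\wh$, and use the commutator lemmas to ``move'' the weight $\weight$ across $a_h$, $\Dh$ and $\fIh$ at the price of lower-order terms $h\|\cdot\|_{1}$ and of best-approximation terms $h^t|u|_{t+1,\Guno}$, which can then be absorbed into the left-hand side once $h<h_0$.

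The key steps, in order, are: (i) write $|\weight e|_{1,\Guno}^2 \lesssim a_h^{\text{loc}}(\weight e,\weight e) + (\text{stabilization of } (\Id-\tPinabla)(\weight e))$, reducing everything to a discrete bilinear form applied to $\weight e$; (ii) replace $\weight e$ by $\weight \vh^*$ with $\vh^* := \tPinabla(\weight e)$ or a suitable discrete surrogate, using Corollary \ref{cor:disccom} and Lemma \ref{lem:disccom} to control $|\weight e - \fIh(\weight e)|_1$ and similar interpolation errors by $h^t|u|_{t+1,\Guno} + h\|e\|_{1,\Guno}$ (here the hypothesis $u\in H^{1+t}(\Guno)$ enters, through the polynomial-approximation bound \eqref{eq:polyapproxK} applied to $u$ after subtracting $\Pinabla u$); (iii) expand $a_h(\weight e, \fIh(\weight\cdot))$, commute $\weight$ out using Lemma \ref{lem:4.4} and Lemma \ref{lem:commDK} so that $\weight^2$ lands next to $e$, producing a term $a_h(e,\fIh(\weight^2 \cdot))$ that is handled by the error equation \eqref{eq:error}; (iv) bound the resulting right-hand side: the $\int_\Omega \df\,\wh$ term by Proposition \ref{boundrhs} (giving $h^{t+1}\|f\|_{0,\Omega}$-type contributions after using $f=-\Delta u$ and $m\geq k-2\geq t-2$... more carefully $h^{1+\min\{\cdot\}}\|f\|_{0,\Omega}$, which yields the $h^{t+1}\|f\|_{0,\Omega}$ term in \eqref{boundcor8b}), and the $\Dh(u,\wh)$ term by Proposition \ref{boundfurbo} (giving $h^{t}|u|_{t+1,\Guno}$ after commuting the weight); (v) collect the genuinely lower-order leftovers of the form $h\|\weight e\|_{1,\Guno}\lesssim h|\weight e|_{1,\Guno} + h\|e\|_{0,\Guno}$, absorb $h|\weight e|_{1,\Guno}$ into the left side for $h$ small, and dispose of $h\|e\|_{0,\Guno}$ via Lemma \ref{lem:7b} (with $\G_0$ replaced by $\G'$, $\Guno$ by $\G''$), which converts it to $h^{1+t}|u|_{t+1,\Guno} + \|e\|_{-p,\Guno}$ plus another $h^2|e|_{1}$ term; finally bound the remaining $\|e\|_{-p,\Guno}\leq\|e\|_{-p,\Omega}$ and the remaining global $|e|_{1,\Omega}$ appearing in $h^t|e|_{1,\Omega}$-type remainders by the a priori estimate \eqref{stimaerrorestandard}, which with $f=-\Delta u\in L^2(\Omega)$ gives $\|e\|_{1,\Omega}\lesssim h\|u\|_{2,\Omega}\lesssim\|u\|_{1,\Omega}$-controlled... actually by \eqref{stimaerrorestandard} with $r=1,\rho=0$, $\|e\|_{1,\Omega}\lesssim h(\|u\|_{2,\Omega}+\|f\|_{0,\Omega})$, and since a crude bound $\|u\|_{2,\Omega}\lesssim\|f\|_{0,\Omega}$ may not hold on a general polygon one keeps the $\|u\|_{1,\Omega}+h\|f\|_{0,\Omega}$ form as stated.

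The main obstacle I expect is step (iii)--(v): making the absorption argument rigorous. One must be careful that every time a weight is commuted across an operator, the leftover is not merely $O(h)$ times the \emph{global} $H^1$ norm of $e$ but $O(h)$ times a norm of $e$ on a \emph{slightly larger but still interior} subdomain, so that the iteration stays inside $\Guno$ and the chain of subdomains does not collapse; this is exactly why one needs a finite ladder $\G_0\subsubset\G'\subsubset\G''\subsubset\Guno$ and why $h_0$ depends on the geometry of these subdomains. A secondary technical point is that $\tPinabla(\weight e)$ is only piecewise virtual, not conforming, so to test the error equation one must first pass to a genuinely conforming $\Vh$ function via $\fIh$ and absorb the (interpolation + non-conformity) discrepancy using \eqref{coerch} and \eqref{stabtermcondition}; keeping track of the stabilization terms $s_h(\Qnabla e, \Qnabla\wh)$ and bounding them via $|\Qnabla e|_1\lesssim|e|_1 +$ best approximation is the part most prone to hidden constants. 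Everything else is bookkeeping with the inequalities already assembled in Sections \ref{sec:notation}--\ref{sec:negativelocal}.
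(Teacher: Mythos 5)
Your overall strategy (a weighted Nitsche--Schatz argument driven by the commutator lemmas and the error equation) is the right one, and your steps (i)--(iv) are broadly consistent with what the paper does in its Lemma \ref{lem:8} (the paper introduces an auxiliary local discrete solution $\ce\in\Vho(\Guno)$ of $a_h(\ce,\vh)=a_h(\weight e,\vh)$ and splits $\weight e=(\weight e-\ce)+\ce$, rather than testing directly with $\fIh(\weight\,\tPinabla(\weight e))$, but that is largely a cosmetic difference). The genuine gap is in your step (v). The leftovers produced by Lemma \ref{lem:disccom}, Lemma \ref{lem:4.4}, Lemma \ref{lem:commDK} and by the identity $\int\nabla(\weight e)\cdot\nabla\ce-\int\nabla e\cdot\nabla(\weight\ce)=\int e[\ldots]$ are of the form $h_K\|e\|_{1,K}$ (or $\|e\|_{0,K}$) summed over the elements meeting $\supp\weight$, i.e.\ they involve the \emph{unweighted} error on an interior neighbourhood of $\supp\weight$; they are not $h\|\weight e\|_{1,\Guno}$. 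Since $\weight$ is not bounded below on its support, $\|e\|_{1,\supp\weight}$ is not controlled by $|\weight e|_{1,\Guno}$, so these terms cannot be absorbed into the left-hand side, no matter how small $h$ is. What your steps (i)--(iv) actually deliver is the one-step estimate $\|e\|_{1,\G_0}\lesssim h\|e\|_{1,\Guno}+h^t|u|_{t+1,\Guno}+\|e\|_{0,\Guno}$, which is exactly the paper's Lemma \ref{lem:8}.

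This matters quantitatively: a single application of that estimate, followed by Lemma \ref{lem:7b} and \eqref{stimaerrorestandard} with $r=\rho=0$, yields only $h(\|u\|_{1,\Omega}+h\|f\|_{0,\Omega})$ in place of the $h^t(\|u\|_{1,\Omega}+h\|f\|_{0,\Omega})$ claimed in \eqref{boundcor8b}, which is strictly weaker for $t\geq 2$. The missing idea is that the one-step estimate must be iterated over a ladder of $q\geq t$ nested subdomains $\G_0=\hOmega_0\subsubset\hOmega_1\subsubset\cdots\subsubset\hOmega_q\subsubset\Guno$, each application trading one factor of $h$ for an enlargement of the domain, so that after $q$ steps one arrives at $h^q\|e\|_{1,\hOmega_q}$ with $q\geq t$; only then does the (low-order) global bound \eqref{stimaerrorestandard} produce the $h^t\|u\|_{1,\Omega}$ term. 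In particular the length of the ladder, and hence $h_0$, depends on $t$ (and on $p$ through Lemma \ref{lem:7b}), not just on the three fixed subdomains $\G_0\subsubset\G'\subsubset\G''\subsubset\Guno$ in your plan. Everything else in your outline --- the treatment of $\int_\Omega\df\,\vh$ via orthogonality of $f-\Pi^0_mf$, of $\Dh(u,\cdot)$ via Proposition \ref{boundfurbo}, and the final appeal to the negative-norm machinery --- matches the paper.
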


In order to prove Theorem \ref{thm:main}, we start by proving the following lemma.
	
\begin{lemma}\label{lem:8} 	
	Let $\G_0 \subsubset \Guno \subsubset \Omega$ and
	let $e = u - u_h$ with $u$ and $u_h$ 
	solution to 
	\eqref{Pb:weak} and \eqref{pb:discrete}, respectively. 
Assume that $u|_{\Guno} \in H^{1+t}(\Guno)$, $1 \leq t \leq k$. 
Then, there exists $h_0$ such that, provided $h < h_0$
			\[
			\| e \|_{1,\G_0} \lesssim h \| e \|_{1,\Guno}  + h^t | u |_{1+t,\Guno}  + \| e \|_{0,\Guno}.\]
\end{lemma}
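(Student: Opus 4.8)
The argument adapts the classical Nitsche--Schatz interior energy estimate to the virtual element framework, the consistency error being controlled by the commutator bounds of Section \ref{sec:commutator}. I would first fix an intermediate subdomain $\G'$ with $\G_0\subsubset\G'\subsubset\Guno$ and a cut-off $\weight\in C^\infty_0(\G')$ with $\weight\equiv 1$ on $\G_0$, and choose $h_0$ so that for $h<h_0$ every $K\in\Tess$ meeting $\supp\weight$ lies in $\overline\Guno$. Then $\weight e\in H^1(\Omega)$, and, since $u|_{\Guno}\in H^{1+t}(\Guno)$ is smooth near $\supp\weight$, the functions $w_h:=\fIh(\weight e)$ and $\fIh(\weight w_h)$ are well defined, belong to $\Vh$, and are supported in $\overline\Guno$. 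As $|e|_{1,\G_0}=|\weight e|_{1,\G_0}\le|\weight e|_{1,\Omega}$, $\|e\|_{0,\G_0}\le\|e\|_{0,\Guno}$ and $\|\weight e\|_{0,\Omega}\le\|e\|_{0,\Guno}$, it suffices to prove
\[ |\weight e|_{1,\Omega}^{2}\lesssim \big(h\|e\|_{1,\Guno}+h^{t}|u|_{1+t,\Guno}+\|e\|_{0,\Guno}\big)\|\weight e\|_{1,\Omega}+h^{2}\|e\|_{1,\Guno}^{2}+h^{2t}|u|_{1+t,\Guno}^{2}+\|e\|_{0,\Guno}^{2}, \]
since, together with $\|\weight e\|_{1,\Omega}\lesssim|\weight e|_{1,\Omega}+\|e\|_{0,\Guno}$, this gives $|\weight e|_{1,\Omega}\lesssim h\|e\|_{1,\Guno}+h^{t}|u|_{1+t,\Guno}+\|e\|_{0,\Guno}$, hence the thesis. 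Throughout, the "admissible" remainders are those of the form $(\text{small})\,\|\weight e\|_{1,\Omega}$ with $\text{small}=h\|e\|_{1,\Guno}+h^{t}|u|_{1+t,\Guno}+\|e\|_{0,\Guno}$, or bounded by $h^{2}\|e\|_{1,\Guno}^{2}+h^{2t}|u|_{1+t,\Guno}^{2}+\|e\|_{0,\Guno}^{2}$; cross products of the building blocks of "small" are reduced to the latter by a non-lossy Young inequality such as $h^{t+1}|u|_{1+t,\Guno}\|e\|_{1,\Guno}\le\tfrac12 h^{2t}|u|_{1+t,\Guno}^{2}+\tfrac12 h^{2}\|e\|_{1,\Guno}^{2}$.

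The heart of the proof is a chain of identities peeling the weight off $e$. By \eqref{coerch}, the orthogonal splitting $v=\tPinablaK v+(\Id-\tPinablaK)v$ and Corollary \ref{cor:disccom} (applied on each $K$ with $v=u|_K$ and $v_h=-u_h|_K$, so $v+v_h=e|_K$), one has $|\weight e|_{1,\Omega}^{2}\lesssim\sum_K|(\Id-\tPinablaK)(\weight e)|_{1,K}^{2}+a_h(\tPinabla(\weight e),\tPinabla(\weight e))$, whose first sum is admissible; since both $|(\Id-\tPinablaK)(\weight e)|_{1,K}$ and $|(\Id-\IK)(\weight e)|_{1,K}$ are $\lesssim h_K^{t}|u|_{1+t,K}+h_K\|e\|_{1,K}$ (Corollary \ref{cor:disccom}, Lemma \ref{lem:disccom}, \eqref{interperror}), expanding $a_h(\tPinabla(\weight e),\tPinabla(\weight e))$ around $a_h(w_h,w_h)$ and using $\|w_h\|_{1,\Omega}\lesssim\|\weight e\|_{1,\Omega}+h^{t}|u|_{1+t,\Guno}+h\|e\|_{1,\Guno}$ leaves, up to admissible terms, $a_h(w_h,w_h)$. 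Then, using $(\Id-\tPinabla)w_h=0$ and \eqref{eq:conformity}, $a_h(w_h,w_h)=a_h(\weight e,w_h)+a_h((\fIh-\Id)(\weight e),w_h)=a(\weight e,w_h)-\Dh(\weight e,w_h)+(\text{admissible})$; commuting $\weight$ through the \emph{continuous} form, $a(\weight e,w_h)=a(e,\weight w_h)+\int_\Omega e\,\nabla\weight\cdot\nabla w_h-\int_\Omega w_h\,\nabla\weight\cdot\nabla e$, the last two integrals being admissible after one integration by parts moving the gradient off $e$. Finally, replacing $\weight w_h$ by $\fIh(\weight w_h)$ (error $\lesssim h\|e\|_{1,\Guno}|w_h|_{1,\Omega}$ by the discrete commutator property) and invoking the error equation \eqref{othererrorequation} with $\vh=\fIh(\weight w_h)$, $a(e,\fIh(\weight w_h))=\Dh(e,\fIh(\weight w_h))+\int_\Omega\df\,\fIh(\weight w_h)-\Dh(u,\fIh(\weight w_h))$, where $\int_\Omega\df\,\fIh(\weight w_h)$ and $\Dh(u,\fIh(\weight w_h))$ are $\lesssim h^{t}|u|_{1+t,\Guno}\|\fIh(\weight w_h)\|_{1,\Omega}$ by Proposition \ref{boundrhs} (with $f=-\Delta u\in H^{t-1}(\Guno)$ and $t-1\le m+1$) and Proposition \ref{boundfurbo}.

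The only term then left is $\Dh(e,\fIh(\weight w_h))-\Dh(\weight e,w_h)$, and this is where Lemma \ref{lem:commDK} is indispensable. Split it as $\big(\Dh(e,\fIh(\weight w_h))-\Dh(e,\weight w_h)\big)+\big(\Dh(e,\weight w_h)-\Dh(\weight e,w_h)\big)$: the first bracket is $\sum_K\DK(e,(\Id-\IK)(\weight w_h))$, which, using $\DK(a,b)=\DK((\Id-\PinablaK)a,(\Id-\PinablaK)b)$ and $|(\Id-\IK)(\weight w_h)|_{1,K}\lesssim h_K\|w_h\|_{1,K}$, is $\lesssim h\|e\|_{1,\Guno}\|w_h\|_{1,\Guno}$, and the second is $\lesssim h\|e\|_{1,\Guno}\|w_h\|_{1,\Guno}$ directly by \eqref{commDK}. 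Note that $\Dh(\weight e,w_h)$ is \emph{not} small by itself — there is no exact Galerkin orthogonality as in the finite element case — it is only this difference that is $O(h)$. Collecting all contributions and recalling $\|w_h\|_{1,\Omega}\lesssim\|\weight e\|_{1,\Omega}+h^{t}|u|_{1+t,\Guno}+h\|e\|_{1,\Guno}$, every term is admissible and the displayed quadratic inequality for $|\weight e|_{1,\Omega}^{2}$ follows; dividing by $\|\weight e\|_{1,\Omega}$ concludes.

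The main obstacle is bookkeeping rather than conceptual: one must check that each of the many error terms generated by the two commutations and by replacing weighted exact functions with their VEM interpolants carries a factor $h$, a factor $h^{t}|u|_{1+t,\Guno}$, or an $\|e\|_{0,\Guno}$, and that the lower-order "weight derivative" integrals are handled by moving gradients off $e$. The single genuinely non-routine ingredient is that the virtual element consistency error enters only through the difference $\Dh(e,\fIh(\weight w_h))-\Dh(\weight e,w_h)$, which is $O(h)$ thanks to Lemma \ref{lem:commDK} and the discrete commutator property; this is precisely what replaces the exact orthogonality $a(u-u_h,\vh)=0$ available in the finite element analysis.
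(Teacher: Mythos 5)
Your proof is correct, and it rests on exactly the same three pillars as the paper's: the discrete commutator / superapproximation bounds of Lemma \ref{lem:disccom} and Corollary \ref{cor:disccom}, the commutator bound for $\Dh$ of Lemma \ref{lem:commDK}, and the error equation \eqref{othererrorequation} together with the consistency bounds for $\Dh(u,\cdot)$ and $\df$. The organization, however, is genuinely different. The paper introduces an auxiliary function $\ce\in\Vho(\Guno)$ defined by the \emph{local discrete Galerkin problem} $a_h(\ce,\vh)=a_h(\weight e,\vh)$ for all $\vh\in\Vho(\Guno)$, splits $\weight e=(\weight e-\ce)+\ce$, bounds the first piece by a C\'ea-type quasi-optimality argument (which again reduces to superapproximation), and the second by testing with $\ce$ itself and isolating the commutator $\Komega(e,\ce)=a_h(\weight e,\ce)-a_h(e,\weight\ce)$. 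You instead avoid the auxiliary local problem altogether: you take the quasi-interpolant $w_h=\fIh(\weight e)$, reduce $|\weight e|_{1,\Omega}^2$ to $a_h(w_h,w_h)$ via \eqref{coerch}, and close with a quadratic inequality absorbed by Young. What the paper's route buys is a linear (triangle-inequality) structure in which each of the two pieces is estimated once and for all; what your route buys is the elimination of the well-posedness and stability discussion for the local discrete problem, at the price of the Young-inequality bookkeeping and of having to track $\|w_h\|_{1,\Guno}\lesssim\|\weight e\|_{1,\Omega}+\mathrm{small}$ through every term. Your observation that neither $\Dh(e,\fIh(\weight w_h))$ nor $\Dh(\weight e,w_h)$ is individually small, but only their difference is $O(h)$, is precisely the mechanism at work in the paper's treatment of $\Dh(e,\weight\ce)-\Dh(\weight e,\ce)$ inside $\Komega$. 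One small point worth making explicit in a write-up: since $w_h$ and $\fIh(\weight w_h)$ vanish outside a neighbourhood of $\supp\weight$, they belong to $H^1_0(\Guno)$ for $h<h_0$, so their $L^2$ norms (needed in the lower-order terms $\int_\Omega e\,\nabla\weight\cdot\nabla w_h$ and $\int_\Omega e\,\nabla\cdot(w_h\nabla\weight)$) are controlled by their $H^1$ seminorms via a Poincar\'e inequality on the fixed domain $\Guno$.
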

	
	\begin{proof}	Let $\G'$, with $\G_0 \subsubset \G' \subsubset \Guno$, be an intermediate subdomain between $\G_0$ and $\Guno$.	Again, we let $h_0$ be such that for all $h < h_0$, all elements $K \in \Tess$ with $K \cap \G' \not= \emptyset$ satisfy $K \subset \overline{\G}_1$.
		 Let now \[\Vho(\Guno) = \{ \vh \in \Vh: \supp \vh \subseteq\overline{\G}_1\}, \] 		
	and, letting $\omega \in C^\infty_0(\G')$, with $\weight = 1$ in $\G_0$, we let $\te = \weight e$ and $\ce \in \Vho(\Guno)$ denote the solution to 
		\[a_h(\ce,\vh) = a_h(\te,\vh), \quad \forall \vh \in \Vho(\Guno).\]
		
		It holds that
		\[
		\| e \|_{1,\G_0} \leq \| \te \|_{1,\Guno} \leq\| \te - \ce \|_{1,\Guno} + \| \ce \|_{1,\Guno}. 
		\]
	Observing that, as $h < h_0$, $\fIh (\te) \in \Vho(\Guno)$,  using \eqref{coerch} we can write
		\begin{multline*}
		\| \te - \ce \|^2_{1,\Guno} \lesssim \int_{\Tess}| \nabla(\Id-\tPinabla) (\te - \ce) |^2 + a_h(\te - \ce, \te - \ce) \\=
		\int_{\Tess} \nabla(\Id-\tPinabla) (\te - \ce)\cdot \nabla(\Id-\tPinabla)  \te + a_h(\te - \ce, \te - \fIh(\te)) \\
		\lesssim | (\Id-\tPinabla) (\te - \ce) |_{1,\Tess}\, | (\Id-\tPinabla) \te |_{1,\Tess} + | \te - \ce |_{1,\Guno} |\te - \fIh(\te) |_{1,\Guno} \\[1.8mm] \lesssim \left( |(\Id-\tPinabla) \te\, |_{1,\Tess} + |\te - \fIh(\te) |_{1,\Guno} \right)  | \te - \ce |_{1,\Guno},
		\end{multline*}
		yielding
		\[
		\| \te - \ce \|_{1,\Guno} \lesssim | (\Id-\tPinabla) \te\, |_{1,\Tess} + |\te - \fIh(\te) |_{1,\Guno} \lesssim h \| e \|_{1,\Guno} + h^t | u |_{1+t,\Guno},
		\]
		where, for the last bound, we used Lemma \ref{lem:disccom} and Corollary \ref{cor:disccom} with $v = u$ and $\vh = -\uh$.
		Let us now bound $\ce$. It holds that
		\begin{equation}\label{cebound1}
		\| \ce \|^2_{1,\Guno} \lesssim a_h (\ce,\ce) = a_h ( \te,\ce) = a_h( e, \omega \ce) + \Komega(e,\ce)
		\end{equation}
		with 
		\[
		\Komega(w,v) = a_h(\weight w, v)- a_h(w, \weight v).
		\]
Since we have that
\begin{gather*}
(\Id-\tPinabla) \ce = 0, \qquad
(\Id-\tPinabla)  e  = (\Id-\tPinabla)  u, 
\end{gather*}
we can write
\begin{multline*}
\Komega (e,\ce) =  \int_{\Tess} \nabla \Pinabla (\weight e) \cdot \nabla\Pinabla \ce + s_h(\Qnabla(\weight e),\Qnabla \ce) -  \int_{\Tess} \nabla \Pinabla e \cdot \nabla\Pinabla (\weight\ce) - s_h(\Qnabla e,\Qnabla (\weight\ce)  ) \\
= 
\int_{\Omega} \nabla(\weight e) \cdot \nabla \ce - \int_{\Tess} \nabla \Qnabla (\weight e) \cdot\nabla  \Qnabla \ce  \\ - \int_{\Omega} \nabla e \cdot \nabla (\weight\ce) + \int_{\Tess} \nabla \Qnabla e \cdot \nabla\Qnabla (\weight\ce)  + \int_{\Tess}\nabla ((\Id-\tPinabla) u) \cdot \nabla (\Id-\tPinabla) (\weight\ce) \\ + s_h(\Qnabla(\weight e),\Qnabla \ce) - s_h(\Qnabla e, \Qnabla(\weight \ce))\\
= \int_{\Omega} \nabla(\weight e) \cdot \nabla \ce - \int_{\Omega} \nabla e \cdot \nabla (\weight\ce) - \Dh(\weight e , \ce) + \Dh(e , \weight \ce) + \int_{\Tess}\nabla ((\Id-\tPinabla) u) \cdot \nabla (\Id-\tPinabla) (\weight\ce).
\end{multline*}
We  recall (see \cite{NS}) that we have (the implicit constant depending on $\weight$)
\begin{multline*}
\int_{\Omega} \nabla(\weight e) \cdot \nabla \ce - \int_{\Omega} \nabla e \cdot \nabla (\weight\ce) 
= \int_\Omega e [
\nabla \omega\cdot \nabla \ce + \nabla \cdot \ce \nabla \weight
] \\ 
\lesssim \| e \|_{0,\Guno} \|   \nabla \omega\cdot \nabla \ce + \nabla \cdot \ce \nabla \weight \|_{0,\Guno} \lesssim \| e \|_{0,\Guno} | \ce |_{1,\Guno}.
\end{multline*}
As $\weight$ is supported in $\G'$, using Lemma \ref{lem:commDK} we also have 
\begin{multline*}
 \Dh(e , \weight \ce) -  \Dh(\weight e ,\ce) = \sum_{K\in \Tess: K \subset \Guno} \left(\DK(e , \weight \ce) - \DK(\weight e ,\ce)\right) \\
 \lesssim  \sum_{K\in \Tess: K \subset \Guno}  h_K \| e \|_{1,K} \| \ce \|_{1,K} \leq h \| e \|_{1,\Guno} \| \ce \|_{1,\Guno}.
\end{multline*}
Moreover, also since $\weight$ is supported in $\G'$, we can write
\[
\int_{\Tess}\nabla ((\Id-\tPinabla) u) \cdot \nabla (\Id-\tPinabla) (\weight\ce) \lesssim | u - \tPinabla u |_{1,\G'} | \weight \ce |_{1,\Guno}
\lesssim h^t | u |_{t+1,\Guno} \| \ce \|_{1,\Guno},
\]
finally yielding
\begin{equation}\label{boundKomega}
\Komega (e,\ce) \lesssim \left(\| e \|_{0,\Guno} +  h \| e \|_{1,\Guno}  + h^t | u |_{t+1,\Guno}\right) \| \ce \|_{1,\Guno}.
\end{equation}


We observe that, under the conditions that we assume to hold for $t$, $k$ and $m$, we have that $\min\{ t-1,m+1\} = t-1$. Adding and subtracting $\fIh(\weight \ce) $ and using \eqref{eq:error}, \eqref{disccom1}, \eqref{errorrhs} and \eqref{stabilityIK}, as $u\in H^{t+1}(\Omega_1)$ implies that $f = -\Delta u \in H^{t-1}(\Omega_1)$,  we bound
		\begin{multline}\label{5.3}
		a_h( e, \omega \ce)  = a_h( e, \weight \ce - \fIh(\weight \ce) ) + \Dh(u,\fIh(\weight \ce) ) - \int_{\Omega} \df \fIh(\weight \ce) \\[1.8mm] \lesssim 
		\| e \|_{1,\Guno} \| \weight  \ce - \fIh(\weight \ce)  \|_{1,\Guno} + \Dh(u,\fIh(\weight \ce) ) 
		\blu{-  \int_{\Omega} \df \fIh(\weight \ce)}\\[1.8mm]
		\lesssim h \| e \|_{1,\Guno} \| \ce \|_{1,\Guno} + \blu{ h^t | u |_{t+1,\Guno}} | \ce |_{1,\Guno}.
		\end{multline}
	Plugging \eqref{boundKomega} and \eqref{5.3} in \eqref{cebound1}  we obtain
		\[
		\| \ce \|_{1,\Guno}^2 \lesssim \left(h \| e \|_{1,\Guno}  + h^t | u |_{t+1,\Guno} + \| e \|_{0,\Guno}\right) \| \ce \|_{1,\Guno},\]
		from which, dividing by $\| \ce \|_{1,\Guno}$ we obtain
		\[
		\| \ce \|_{1,\Guno} \lesssim h \| e \|_{1,\Guno}  + h^t | u |_{t+1,\Guno}  + \| e \|_{0,\Guno},
		\]
whence, by \blu{triangle}  inequality
		\[
		\| e \|_{1,\G_0} \lesssim h \| e \|_{1,\Guno}  + h^t | u |_{t+1,\Guno}  + \| e \|_{0,\Guno}.\]
\end{proof}

\blu{We can now combine Lemma \ref{lem:8} with Lemma \ref{lem:7b}, and we obtain the following corollary, where $h_0 = \min\{h_0',h_0''\}$, $h_0'$ given by Lemma \ref{lem:8} on $ \G_0'\subsubset \G_1'$ and $h_0''$ given by Lemma \ref{lem:7b} on $ \G_0''\subsubset \G_1''$, where $\G_0 = \G_0'$, $\G_1 = \G_1''$, and where $\G_1'= \G_0''$  denote an intermediate subdomain. 
\begin{corollary}\label{cor:8}
	Under the assumptions of Lemma \ref{lem:8}, for $p > 0$ arbitrary, there exists $h_0$ such that, provided $h < h_0$
	\[
	\| e \|_{1,\G_0} \lesssim h \| e \|_{1,\Guno}  + h^t | u |_{1+t,\Guno}  + \| e \|_{-p,\Guno}.\]
\end{corollary}
}

We can now prove Theorem \ref{thm:main}.

\begin{proof}[Proof of Theorem \ref{thm:main}] Let $q$, $p$ be  arbitrary positive integers and 
		let once again $\hOmega_\ell$, $\ell = 0,\cdots,q$ be intermediate subdomains with
		\(	\G_0= \hOmega_0 \subsubset \hOmega_1 \subsubset \cdots \subsubset \hOmega_q = \Guno\). By Corollary \ref{cor:8}, for $\ell  = 0,\cdots,q$, there exists $h_{0,\ell}$ such that, provided $h < h_{0,\ell}$, the bound 
			\[
			\| e \|_{1,\hOmega_{\ell}} \lesssim h \| e \|_{1,\hOmega_{\ell+1}}  + h^t | u |_{1+t,\hOmega_{\ell+1}}  + \blu{\| e \|_{-p,\hOmega_{\ell+1}}}\]
holds. Then, \blu{if $h < h_0 = \min_{\ell}\{ h_{0,\ell} \}$} we can write
\begin{multline}
\| e \|_{1,\G_0} \lesssim h \| e \|_{1,\hOmega_{1}}  + h^t | u |_{1+t,\hOmega_{1}}  + \blu{ \| e \|_{-p,\hOmega_{1}}}\\[2mm]
 \lesssim
h^2  \| e \|_{1,\hOmega_{2}}  +  (1 + h) h^t | u |_{1+t,\hOmega_{2}}  + (1+h) \blu{ \| e \|_{-p,\hOmega_{2}}}
 \lesssim \cdots \\
\lesssim 
h^q \| e \|_{1,\hOmega_q} + \left(\sum_{\ell=0}^{q-1} h^{\ell}\right) h^t | u |_{1+t,\hOmega_{q}} +  \left(\sum_{\ell=0}^{q-1} h^{\ell}\right)\blu{ \| e \|_{-p,\hOmega_{q}}}.
\end{multline}
As $\sum_{\ell=0}^{q-1} h^\ell \lesssim 1$, this yields
\begin{equation}
\| e \|_{1,\G_0} \lesssim h^q \| e \|_{1,\Guno} + h^t | u |_{t+1,\Guno}  + \| e \|_{-p,\Guno}.\end{equation} 
Choosing $q \geq t$, and using \eqref{stimaerrorestandard} for $r = \rho = 0$ yields 
\[
\| e \|_{1,\G_0} \lesssim h^{t} (\| e \|_{1,\Omega} + | u |_{t+1,\Guno}) + \| e \|_{-p,\Omega}
 \lesssim h^{t} (| u |_{1,\Omega} + h \| f \|_{0,\Omega} + | u |_{t+1,\Guno}) + \| e \|_{-p,\Omega}.
\]
		\end{proof}
		
		\NOTE{
			Intermediate bounds
			\[
			\| e \|_{1,\G_0} \lesssim h^{t} (\| e \|_{1,\Guno} + | u |_{t+1,\Guno}) + \| e \|_{-p,\Guno}.
			\]
			We recall \eqref{stimaerrorestandard}
			\begin{equation*}
			\| u - \uh \|_{1,\Omega} \lesssim h^{\min\{r,k\}} | u |_{1+r,\Omega} + h^{\min\{\rho,m+1\} + 1} | f |_{\rho,\Omega}.
			\end{equation*}
			and the global negative norm estimates. For polygonal domains and $p = k-1$ we have				\[ \| e \|_{-p,\Omega}  \lesssim
				h^{\min\{k,\sigma_0-\varepsilon\}+r} | u |_{r+1,\Omega}  +  h^{\rho + 1 + \min\{m,\sigma_0-\varepsilon\} }\| f \|_{\rho,\Omega}.\label{negativepolygongeneral}
	\]
In the minimal smoothness case	($r = \rho = 0$) this reduces to:
			\[
			\| e \|_{1-k} \lesssim h^{\min\{k,\sigma_0-\varepsilon\}} | u |_{1,\Omega} + h^{\min\{m,\sigma_0-\varepsilon\} + 1} \| f \|_{0,\Omega}.
			\]	
			If $m = k-2$ the second (suboptimal) term dominates, thus the necessity of better approximation of the source term. For smooth domain we have
				\begin{equation*}
				\| e \|_{-p,\O} 
				\lesssim  h^{\min\{p+1,m\} + r}| u |_{r+1,\O}.
				\end{equation*}
				
		}
		
		\newcommand{\effectiveorder}{\kappa}
In order to obtain an explicit a priori estimate on the local error we finally  combine \eqref{boundcor8b} with the global negative norm error estimates of Sections \ref{sec:negativeglobal} and \ref{sec:negativesmooth}. We distinguish two cases: $\Omega$ polygon and $\Omega$ smooth.  If $\Omega$ is a polygon, we can use the bound \eqref{negativepolygongeneral}, and,  in \eqref{boundcor8b}, choose \blu{$p = k-1$}.  We immediately obtain the following corollary,  where $\sigma_0$ is the domain dependent parameter defined in Section \ref{sec:negativeglobal}, related to the interior angles of  $\Omega$.
\begin{corollary}\label{cor:finalpoly} 	Let $\G_0 \subsubset \Guno \subsubset \Omega$, with $\Omega$ polygonal domain, and let $u$ and $u_h$ denote, respectively, the solution to \eqref{Pb:weak} and \eqref{pb:discrete}. 
Assume that \blu{$f \in H^\rho(\Omega)$} and  $u \in H^{1+r}(\Omega)$ for some \blu{$\rho$} and $r$ with $0\leq r\ \blu{\leq k}$ and \blu{$\max\{0,r-1\} \leq \rho \leq m+1$}. Assume also that $u|_\Guno \in H^{1+t}(\Guno)$, with $\max\{1,r\} \leq t \leq k$. Then
\[	\| u - u_h \|_{1,\G_0} \lesssim h^\effectiveorder, \quad \text{ with } \effectiveorder = \min\{t,m+\rho+1,\sigma_0-\varepsilon+r\}.
\]
\end{corollary}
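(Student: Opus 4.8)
The plan is to combine the interior estimate of Theorem~\ref{thm:main} with the global negative-norm bound \eqref{negativepolygongeneral}, and then simply to check that every power of $h$ produced on the right hand side is at least $h^{\effectiveorder}$. Set $e = u - u_h$.

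First I would invoke Theorem~\ref{thm:main} with the particular choice $p = k-1$ (admissible since $k\ge 1$), obtaining, for $h$ smaller than the $h_0$ furnished by that theorem,
\[
\| e \|_{1,\G_0} \lesssim h^t\bigl( | u |_{t+1,\Guno} + \| u \|_{1,\Omega} + h \| f \|_{0,\Omega} \bigr) + \| e \|_{1-k,\Omega} .
\]
Next I would bound the last term by \eqref{negativepolygongeneral}, again with $p = k-1$: then $\g = \min\{k,k\} = k$ and $\s = \min\{k,m\} = m$ (recall $m\le k$), and, choosing for $\rho$ the largest admissible Sobolev index of the source $f = -\Delta u$ (so $0\le\rho\le m+1$, and, since $f\in H^{r-1}(\Omega)$ with $r-1\le m+1$, also $\rho\ge r-1$; this uses $r<k\le m+2$, i.e. $m\ge k-2$), this reads
\[
\| e \|_{1-k,\Omega} \lesssim h^{\min\{k,\sigma_0-\varepsilon\}+r}\, | u |_{r+1,\Omega} + h^{\rho+1+\min\{m,\sigma_0-\varepsilon\}}\, \| f \|_{\rho,\Omega} .
\]

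Since the norms $| u |_{t+1,\Guno}$, $\| u \|_{1+r,\Omega}$ and $\| f \|_{\rho,\Omega}$ are all finite under the hypotheses, it then only remains to verify that the three exponents $t$, $\min\{k,\sigma_0-\varepsilon\}+r$ and $\rho+1+\min\{m,\sigma_0-\varepsilon\}$ are each no smaller than $\effectiveorder = \min\{t,m+\rho+1,\sigma_0-\varepsilon+r\}$; as we may assume $h\le 1$, this will immediately yield $\| e \|_{1,\G_0}\lesssim h^{\effectiveorder}$. The exponent $t$ obviously dominates. For the second, write $\min\{k,\sigma_0-\varepsilon\}+r = \min\{k+r,\,\sigma_0-\varepsilon+r\}$ and use $t\le k\le k+r$ together with $\sigma_0-\varepsilon+r\ge\effectiveorder$. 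For the third, write $\rho+1+\min\{m,\sigma_0-\varepsilon\} = \min\{m+\rho+1,\,\rho+1+\sigma_0-\varepsilon\}$, note that $m+\rho+1$ is one of the terms defining $\effectiveorder$, and use $\rho+1\ge r$ to get $\rho+1+\sigma_0-\varepsilon\ge r+\sigma_0-\varepsilon\ge\effectiveorder$.

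The argument is essentially bookkeeping, so I do not expect a serious obstacle; the only point requiring a little care — and hence the main (mild) difficulty — is the interplay of the three regularity indices $t$, $r$, $\rho$ with the domain parameter $\sigma_0$: one must make sure that $\rho$ can legitimately be taken with $\rho\ge r-1$ while remaining in the range $0\le\rho\le m+1$ underlying \eqref{negativepolygongeneral} (which is exactly where $m\ge k-2$ and $r<k$ enter), so that the two $\sigma_0$-limited contributions coalesce into the single exponent $\sigma_0-\varepsilon+r$ appearing in $\effectiveorder$.
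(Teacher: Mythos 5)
Your proposal is correct and follows exactly the route the paper intends: plug the global negative-norm bound \eqref{negativepolygongeneral} with $p=k-1$ (the paper's ``$p=1-k$'' is evidently a typo for the choice making $\|e\|_{-p,\Omega}=\|e\|_{1-k,\Omega}$) into the interior estimate \eqref{boundcor8b}, and check the exponents. The paper states this combination ``immediately'' yields the corollary; your bookkeeping, including the observation that $\rho\ge r-1$ makes the two $\sigma_0$-limited exponents collapse into $\sigma_0-\varepsilon+r$, is precisely the omitted verification and is accurate.
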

 For $r = \rho = 0$ we obtain the following bound, valid under the minimal global regularity assumptions on $u$, namely $u \in H^1(\Omega)$, $f = -\Delta u \in L^2(\Omega)$:
 \[
	\| u - u_h \|_{1,\G_0} \lesssim h^{\min\{t,\sigma_0-\varepsilon\}} (| u |_{1+t,\Guno} + \| u \|_{1,\Omega})+ h^{\min\{t,m,\sigma_0-\varepsilon\} + 1} \| f \|_{0,\Omega}.
	\]

If, on the other hand, $\Omega$ is smooth (or if $\Omega$ is a square) using once again \eqref{boundcor8b} with  $p = k-1$ yields the following bound:
\[\| e \|_{1,\G_0} \lesssim h^t | u |_{1+t,\Guno} + \big(h^t + h^{k + r} \big) \| u \|_{1+r,\Omega} + \big(
h^{t+1} + h^{m + \rho + 1} 
\big) \| f \|_{\rho,\Omega}.
\]
This time, we have the following corollary.
\begin{corollary}\label{cor:finalsmooth}
	Let $\G_0 \subsubset \Guno \subsubset \Omega$, with $\Omega$ smooth domain, and let $u$ and $u_h$ denote, respectively, the solution to \eqref{Pb:weak}, and the solution to \eqref{pb:discrete} obtained with the discretization considered in Section \ref{sec:negativesmooth}. Assume that $u \in H^{1+r}(\Omega)$ for some $r$ with \blu{$1 \leq  r \leq k$} and that $u|_\Guno \in H^{1+t}(\Guno)$, with \blu{$r \leq t \leq k$}. Then we have
\[	\| u - u_h \|_{1,\G_0} \lesssim h^\effectiveorder, \quad \text{ with } \effectiveorder = \blu{\min\{t,m+r\}}.
\]
\end{corollary}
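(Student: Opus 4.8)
The plan is to obtain Corollary \ref{cor:finalsmooth} as a bookkeeping consequence of Theorem \ref{thm:main} together with the two global error bounds already at our disposal: the standard $H^1$ estimate \eqref{stimaerrorestandard} and the sharp negative norm estimate on smooth domains \eqref{negativesmoothbest}. No new analytic ingredient is required. Since $\Omega$ is smooth and $f\in L^2(\Omega)$, the exact solution automatically satisfies $u\in H^2(\Omega)$, so we may assume $r\ge 1$ and set $\rho=r-1$; we will use throughout the orderings $1\le r\le t\le k$ and $m\ge k-2$.

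First I would fix a positive integer $p$, take $h$ below the threshold of Theorem \ref{thm:main} so that \eqref{boundcor8b} holds, and specialize to $p=k-1$. Bounding the negative norm term $\|e\|_{-(k-1),\Omega}$ appearing in \eqref{boundcor8b} by means of \eqref{negativesmoothbest} --- which for $p=k-1$ reads $\|e\|_{-(k-1),\Omega}\lesssim (h^{k+r}+h^{m+r})\,|u|_{r+1,\Omega}$, since $m\le k$ --- yields
\[
\|e\|_{1,\G_0}\lesssim h^t\big(|u|_{1+t,\Guno}+\|u\|_{1,\Omega}+h\|f\|_{0,\Omega}\big)+\big(h^{k+r}+h^{m+r}\big)\,|u|_{r+1,\Omega}.
\]
Using $\|u\|_{1,\Omega}\le\|u\|_{1+r,\Omega}$, $\|f\|_{0,\Omega}\le\|f\|_{\rho,\Omega}$, the identity $m+r=m+\rho+1$, and, on the smooth domain $\Omega$, the elliptic regularity bound $|u|_{r+1,\Omega}\lesssim\|f\|_{\rho,\Omega}$, this is exactly the inequality displayed just before the statement of the corollary,
\[
\|e\|_{1,\G_0}\lesssim h^t|u|_{1+t,\Guno}+\big(h^t+h^{k+r}\big)\|u\|_{1+r,\Omega}+\big(h^{t+1}+h^{m+\rho+1}\big)\|f\|_{\rho,\Omega}.
\]

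The last step is to read off the leading power of $h$. Since $k\ge t$ and $r\ge 0$ we have $h^{k+r}\lesssim h^t$, and of course $h^{t+1}\lesssim h^t$, so the surviving powers are $h^t$, $h^{t+1}$ and $h^{m+\rho+1}$. If $m+\rho+1\ge t$, all of these are $\lesssim h^t$ and $\effectiveorder=t=\min\{t,m+\rho+1\}$; if $m+\rho+1<t$, then $h^{m+\rho+1}$ is the slowest decaying term and $\effectiveorder=m+\rho+1=\min\{t,m+\rho+1\}$. In either case $\|u-u_h\|_{1,\G_0}\lesssim h^{\effectiveorder}$, the hidden constant depending on $|u|_{1+t,\Guno}$, $\|u\|_{1+r,\Omega}$ and $\|f\|_{\rho,\Omega}$.

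I do not anticipate a genuine difficulty here. The only points that need attention are (i) checking that the quoted ingredients --- Theorem \ref{thm:main}, \eqref{negativesmoothbest} and \eqref{stimaerrorestandard} --- are all stated for the curved edge discretization of Section \ref{sec:negativesmooth} and hold simultaneously once $h$ is below the minimum of the relevant thresholds; and (ii) the index bookkeeping, namely verifying, via $\rho=r-1$ and $1\le r\le t\le k\le m+2$, that every ``extra'' contribution ($h^{k+r}$, $h^{t+1}$, and the $H^1$ error terms brought in through \eqref{stimaerrorestandard}) is asymptotically dominated by $h^{\min\{t,m+\rho+1\}}$.
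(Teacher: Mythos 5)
Your proposal is correct and follows exactly the paper's route: apply Theorem \ref{thm:main} with $p=k-1$, control $\|e\|_{-(k-1),\Omega}$ via the smooth-domain negative norm estimate \eqref{negativesmoothbest}, and read off the dominant power of $h$ using $\rho=r-1$, $t\le k$ and $m\le k$. The index bookkeeping is carried out correctly, so nothing further is needed.
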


Under the minimal  global regularity assumption on $f$, namely  $f = -\Delta u \in L^2(\Omega)$, this time we have that
	\[
	\| u - u_h \|_{1,\G_0} \lesssim h^t (| u |_{1+t,\Guno} + \| u \|_{1,\Omega})+ h^{\min\{t,m\} + 1} \| f \|_{0,\Omega}.\]
Observe that we do not have optimality unless $m \geq k-1$.

\begin{remark}
	While, for the sake of simplicity, we focused our analysis on homogeneous Dirichlet boundary conditions, the result presented in Section \ref{sec:main} extends also to other boundary conditions, such as non homogeneous Dirichlet, or mixed. In such cases, depending on the smoothness of the boundary data, the solution $u$ might lack overall regularity also for very smooth right hand side $f$.
	\end{remark}

\newcommand{\VEMKen}{V^k_\text{en}(K)}
\newcommand{\Vhen}{V^\text{en}_h}
\newcommand{\huh}{\widehat u_h}
\newcommand{\hvh}{\widehat v_h}
\newcommand{\hwh}{\widehat w_h}

\blu{

\subsection{Extension to the enhanced virtual element method}\label{sec:enhanced} 
	Let us briefly sketch how the interior estimate \eqref{boundcor8b} can be extended to a particularly relevant version of the virtual element method, namely the  {\em enhanced} VEM \rosso{\cite{EnhancedVEM}}, characterized by a local discretization space  defined as
\[
\VEMKen = \{ v \in H^1(K): \ v|_{\bK} \in \BoK, \ \Delta v \in \mathbb{P}_{k}(K),\text{ and }
(v - \PinablaK v) \perp (\Poly{k}(K)\cap\Poly{k-2}(K)^\perp)
\},
\]
where the orthogonality is intended with respect to the $L^2(K)$ scalar product, and where $\Poly{k}(K)\cap\Poly{k-2}(K)^\perp$ denotes the space of those polynomials of degree at most $k$ that are orthogonal, in $L^2(K)$, to all polynomials of degree at most $k-2$. 
Letting $\Vhen \subset H^1_0(\O)$ denote the corresponding global virtual element space, we consider the problem: find $u_h \in \Vhen$ such that for all $v_h \in \Vhen$
\begin{equation}\label{prob:en}
a_h(u_h,v_h) = \int_{\O} \Pi^0_k f v_h,
\end{equation}
where $a_h$ is defined as before.
The space $\Vhen$ does not fall into the framework which we considered up to now, since for no value of $m$ we have that $\VEMKen = \VEMK$. As a consequence, the proof of Lemma \ref{lem:disccom} is not valid for such a space. However we know (see \cite{EnhancedVEM}) that the functions in $\VEMKen$ and in $V^k_{k-2}(K)$ have the same set of degrees of freedom, and that, letting $v_h, w_h \in \VEMKen$ and $\widehat v_h, \widehat w_h \in V^k_{k-2}(K)$ denote two couples of functions 
satisfying
\begin{equation}\label{enhancedequiv1}
\hvh= v_h,\quad \hwh = w_h\ \text{ on }\bK, \qquad \int_K \hvh q = \int_K v_h q, \quad\int_K \hwh q = \int_K w_h q, \ \forall q \in \Poly{k-2}(K),
\end{equation}
(which is equivalent to saying that the value of all the  degrees of freedom of $v_h$, $w_h$ coincide, respectively, with those of $\hvh$ and $\hwh$) we have  
\begin{equation}\label{enhancedequiv2}
 \PinablaK v_h = \PinablaK \hvh, \qquad \text{ and }\qquad a_h (v_h,w_h) = a_h(\hvh,\hwh).
\end{equation}
It is then not difficult to check that  $u_h \in V_h^\text{en}$ is the solution of \eqref{prob:en} if and only if the corresponding function $\huh \in V_h$ ($V_h$ being the ``plain'' VEM space defined in \eqref{defVh} with $m = k-2$)  is solution of the modified problem: find $\huh \in V_h$ such that for all $\hvh \in V_h$
\begin{equation}\label{prob:enequiv}
a_h(\huh,\hvh) = \int_\O f \Pi^0_\text{en} \hvh,
\end{equation}
where the enhanced projection $\Pi^0_\text{en}: H^1(\Tess) \to \Poly{k}(\Tess)$ is defined, element by element, as $\Pi^0_\text{en} \hvh|_K \in \Poly{k}(K)$ such that
\[
\int_K (\Pi^0_\text{en} \hvh - \hvh) q = 0, \ \forall q \in \Poly{k-2}(K) \ \text{ and }\ \int_K (\Pi^0_\text{en} \hvh - \PinablaK \hvh) q = 0, \ \forall q \in \Poly{k}(K) \cap\Poly{k-2}(K)^\perp.
\]
Apart from the definition of the right hand side, equation \eqref{prob:enequiv} falls in the framework studied in the previous sections. It is not difficult to check that for $f \in H^r(K)$, $v\in H^{t+1}(K)$, $0\leq r \leq k-1$, $0 \leq t \leq k$ we have
\[
\int_K \delta_f v = \int_K f (v - \Pi^0_\text{en} v) = \int_K (f - \Pi^0_{k-2} f)(v - \Pi^0_\text{en}v) \lesssim h^{r+t+1}| f |_{r,K} | v |_{t+1,K}.
\]
\NOTE{
Proposition \ref{boundrhs} holds with $t \leq k$ rather than $ t \leq m$. Thanks to that equation \eqref{boundA} 
becomes
	\begin{gather}\label{boundA.en}
	\int_\Omega \nabla e \cdot \nabla  \vphi 	\lesssim 
	(| e |_{1,\Omega} + h^r | u |_{r+1,\Omega}) h^{\min\{s,k\}} | \vphi |_{s+1,\Omega}  + h^{\rho+\min\{s,k\} + 1} \| f \|_{\rho,\Omega} | \vphi |_{s+1,\Omega}.
\end{gather}
Using \eqref{boundA.en} \blu{and \eqref{lifting}} in \eqref{duality}, and bounding $| e |_{1,\Omega}$ thanks to  \eqref{stimaerrorestandard},  we then obtain the following bound
\begin{gather*}
	\| e \|_{-p,\O} \lesssim  h^{\min\{s,k\}}\big(h^r | u |_{r+1,\O}+ | e |_{1,\O}\big) + h^{\min\{s,k\} + \rho + 1}  \| f \|_{\rho,\Omega} 
	\lesssim h^{\min\{s,k\} + r } | u |_{r+1,\O} + h^{\min\{s,k\} + \rho + 1} \| f \|_{\rho,\Omega},
\end{gather*}
where we used the fact that $m \leq k$.
Setting 
\[\g = \s = \min\{p+1,k\}\] we then have that equation \eqref{negativepolygongeneral} becomes
\begin{equation} \| e \|_{-p,\Omega}  \lesssim
	h^{\min\{\g,\sigma_0-\varepsilon\}+r} | u |_{r+1,\Omega}  +  h^{\rho + 1 + \min\{\s,\sigma_0-\varepsilon\} }\| f \|_{\rho,\Omega}.\label{negativepolygongeneral.en}
\end{equation}
while \eqref{negativesmoothbest} becomes
	\begin{equation}\label{negativesmoothbest.en}
	\| e \|_{-p,\O} 
	\lesssim \big(h^{\min\{p+1,k\} + r } + h^{\min\{p+1,k\} + r}\big) | u |_{r+1,\O}  \lesssim h^{\min\{p+1,k\} + r}| u |_{r+1,\O}.
\end{equation}
As far as Lemma \ref{lem:6b} is concerned, the term affected by the alteration of the right hand side is the term III (here by abuse of notation we call $\| \cdot \|_{1,\G_h}$ the broken $H^1$ norm). 
	\begin{multline}
	III = \int_\Guno \df \fIh(\weight \vphi) =
	\int_{\G_h} (f - \Pi^0_m f) (\fIh (\weight \vphi) - \Pi_\text{en}^0 \fIh( \weight \vphi))
	\\ \lesssim  
	\|  f - \Pi^0_m f \|_{0,\G_h} \left(\| (\uno-\Pi_\text{en}^0)(\fIh (\weight \vphi) - \weight \vphi)  \|_{0,\G_h} + \|  \weight \vphi - \Pi_\text{en}^0 (\weight \vphi)\|_{0,\G_h} \right)\\[1.8mm]
	\lesssim  
	\|  f - \Pi^0_m f \|_{0,\G_h} \left( h \| (\uno-\Pi_\text{en}^0)(\fIh (\weight \vphi) - \weight \vphi)  \|_{1,\G_h} + h \|  \weight \vphi - \Pi_\text{en}^0 (\weight \vphi)\|_{1,\G_h} \right)\\[1.8mm]
	\lesssim \|  f - \Pi^0_m f \|_{0,\G_h} \left(h \| \fIh (\weight \vphi) - \weight \vphi  \|_{1,\Guno} +h  \|  \weight \vphi - \Pi_\text{en}^0  (\weight \vphi)\|_{1,\Guno} \right)
\end{multline}
we added and subtracted $\weight\vphi - \Pi^0_\text{en}(\weight \vphi)$ and we used a Poincar\'e inequality, as $(\uno - \Pi^0_\text{en})$ is orthogonal to in $L^2$ to the piecewise constants.
Using \eqref{eq:polyapproxK} and \eqref{interperror} gives (with $\g = \s = \min\{p+1,k\}$)
\begin{multline}
	III		\lesssim h^{t-1} | f |_{t-1,\G_h} \left(
	h \| \fIh (\weight \vphi)  - \weight \vphi \|_{1,\Guno} + h^{\s +1} \| \weight \vphi \|_{p+2,\Guno}
	\right)\\[1.8mm]
	\lesssim h^{t-1} | f |_{t-1,\Guno} \left(
	h^{\g+1} \| \weight \vphi \|_{p+2,\Guno} + h^{\s+1} \| \weight \vphi \|_{p+2,\Guno}
	\right)	 \lesssim h^{t + \s} | f |_{t-1,\Guno} \| \phi \|_{p,\Guno}.
\end{multline}	
Then, Lemma \ref{lem:6b} becomes
\begin{lemma} Let $\G_0 \subsubset \Guno \subsubset \Omega$ be  fixed smooth interior subdomains of  $\Omega$, and let the solution $u$ to Problem \eqref{Pb:strong} satisfy $ u \in H^{t+1}(\Guno)$, with $1 \leq t \leq k$. Let $u_h \in \Vh$ denote the solution to Problem \eqref{pb:discrete}.  Then, there exists an $h_0$ such that, if $h < h_0$ we have, for $p \geq 0$ integer, 
	\[
	\| u - u_h \|_{-p,\Omega_0} \lesssim h^\g | u - u_h |_{1,\Guno} + \| u - u_h \|_{-p-1,\Guno} +  h^{\s + t} |  u |_{t+1,\Guno},	\] 
	with $\g = \min\{ p+1,k\}$, and $\s = \min\{p+1,k\}$.
\end{lemma}

}
Thanks to this inequality, used for those bounds affected by the altered right hand side, particularly  Proposition \ref{boundrhs} and Lemma \ref{lem:6b}, our analysis  carries over, with minor modifications, to Problem \eqref{prob:enequiv}. 
Moreover, thanks to the higher approximation order of $\Pi^0_\text{en}$ with respect to $\Pi^0_{k-2}$ ($\Pi^0_\text{en}$ satisfies an error bound similar to the one in Proposition \ref{prop:3.2}, \cite{EnhancedVEM}) Lemma \ref{lem:6b} now holds with $\tau = \min\{p+1,k\}$. Then Theorem \ref{thm:main}, as well as its corollaries, hold, and provide optimal local error bounds for $u - \huh$. More precisely, letting  $\Omega' \subsubset \Omega_1 \subsubset \Omega$, for $h$ small enough, under the minimal global regularity assumptions on $u$ and $f$, we have that $u \in H^{t+1}(\Guno)$, with $t \leq k$, implies
\begin{equation}\label{enhancedmain}
\| u - \huh \|_{1,\G'} \lesssim h^{\kappa} (| u |_{1+t,\Guno}  + \| f \|_{0,\Omega}), \qquad k = \begin{cases}
	\min\{t,\sigma_0-\varepsilon\} & \text{if $\Omega$ is a polygon,}\\
	t & \text{if $\Omega$ is smooth.}
	\end{cases}
\end{equation}

\

Let now $\Omega_0 \subsubset \Omega'$ and assume  that $h$ is sufficiently small so that  $\Omega^+_0 \subset \G'$, where  $\Omega_0^+ = \cup_{K\in \mathcal{T}_h^+} K$ with $\mathcal{T}_h^+ = \{
K \in \Tess : K \cap \G_0 \not=\emptyset\}$ being the set of all elements that have non empty intersection with $\G_0$.  To bound $u - u_h$ in $\G _0$ we start by observing that, by triangle inequality and \eqref{errorPinabla} we have
\[
\| u - u_h \|_{1,\Omega_0} \leq \| u - \Pinabla u \|_{1,\Omega_0} + \| \Pinabla u - u_h \|_{1,\Omega_0} \lesssim h^t | u |_{t+1,\Omega_0^+} + \| \Pinabla u - u_h \|_{1,\Omega_0^+}.
\] 
To bound the second term on the right hand side, we add and subtract, element by element, the boundary average, apply a triangle inequality  and use a Poincar\'e inequality to bound the $L^2$ norm of the boundary-average free  terms with their $H^1$ seminorm, which, in turn, is bound 
 using \eqref{3.5b}, thus obtaining
\begin{multline}\label{enhancedtransfer}
 \| \Pinabla u - u_h \|^2_{1,\Omega_h}  \lesssim  \sum_{{K \in \Tess^+}}\left(
 \Big| \int_{\bK} (\Pinabla u - u_h) \Big|^2 + 
 a^K_h(
 \Pinabla u - u_h,\Pinabla u - u_h
 ) \right)\\= \sum_{{K \in \Tess^+}}\left(
\Big| \int_{\bK} (\Pinabla u - \huh) \Big|^2 + 
 a^K_h(
 \Pinabla u - \huh,\Pinabla u - \huh
 ) \right) \\
 \lesssim \| \Pinabla u - \huh \|^2_{1,\Omega'} \lesssim \| u - \Pinabla u \|^2_{1,\Omega'} + \|  u - \huh \|^2_{1,\Omega'},
\end{multline}
where we could replace $u_h$ with $\huh$ thanks to \eqref{enhancedequiv1} and \eqref{enhancedequiv2} and where we used \eqref{3.5b} once again. 
\NOTE{$ \bar v := \fint_{\bK} (\Pinabla u - u_hh) = | \bK |^{-1} \int_{\bK} (\Pinabla u - u_h)$. We have
\begin{multline*}
\| \Pinabla u - u_h \|^2_{0,K} + | \Pinabla u - u_h |_{1,K}^2 \lesssim \| \bar v \|_{0,K}^2 + \| ( \Pinabla u - u_h ) - \bar v \|_{0,K}^2  + | \Pinabla u - u_h |_{1,K}^2 \\
\lesssim | K | |\bK |^{-2} | \int_{\bK} ( \Pinabla u - u_h ) |^2 + (1+h^2) | \Pinabla u - u_h |_{0,K} \\
\lesssim | \int_{\bK} ( \Pinabla u - u_h ) |^2 +  | \Pinabla u - u_h |_{0,K} \lesssim | \int_{\bK} ( \Pinabla u - u_h ) |^2 + a_h(\Pinabla u - u_h ,\Pinabla u - u_h ).
\end{multline*}
We also have
\begin{multline*}
| \int_{\bK} (\Pinabla u - \huh) |^2 + a_h(\Pinabla u - \huh ,\Pinabla u - \huh ) \lesssim | \bK | \int_{\bK} |\Pinabla u - \huh|^2 + | \Pinabla u - \huh |_{1,K}^2\\ = | \bK | \| \Pinabla u - \huh \|_{0,\bK}^2 + | \Pinabla u - \huh |_{1,K}^2 
\lesssim | \bK | (h^{-1}\| \Pinabla u - \huh \|^2_{0,K} + h | \Pinabla u - \huh |^2_{1,K}) + | \Pinabla u - \huh |_{1,K}^2 \\ \lesssim \| \Pinabla u - \huh \|_{0,K}^2 + (1+h^2) | \Pinabla u - \huh |_{1,K}^2
\end{multline*}
}
Combining \eqref{enhancedtransfer} with \eqref{errorPinabla} and  \eqref{enhancedmain}  finally yields the optimal error bound
\begin{equation}\label{enhancedmain}
	\| u - u_h \|_{1,\G_0} \lesssim h^{\kappa} (| u |_{1+t,\Guno}  + \| f \|_{0,\Omega}), \qquad k = \begin{cases}
		\min\{t,\sigma_0-\varepsilon\} & \text{if $\Omega$ is a polygon,}\\
		t & \text{if $\Omega$ is smooth.}
	\end{cases}
\end{equation}

}

%

\begin{figure}
	\centering
	{\includegraphics[width=0.4\textwidth]{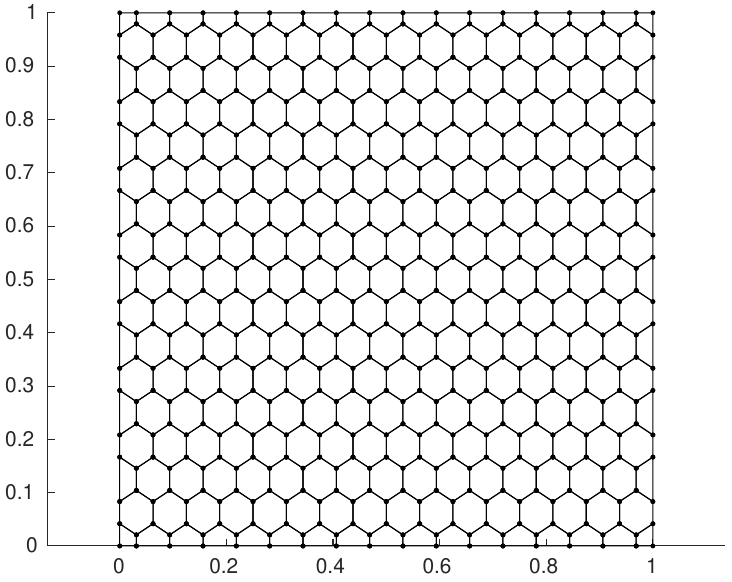}}\quad
	{\includegraphics[width=0.4\textwidth]{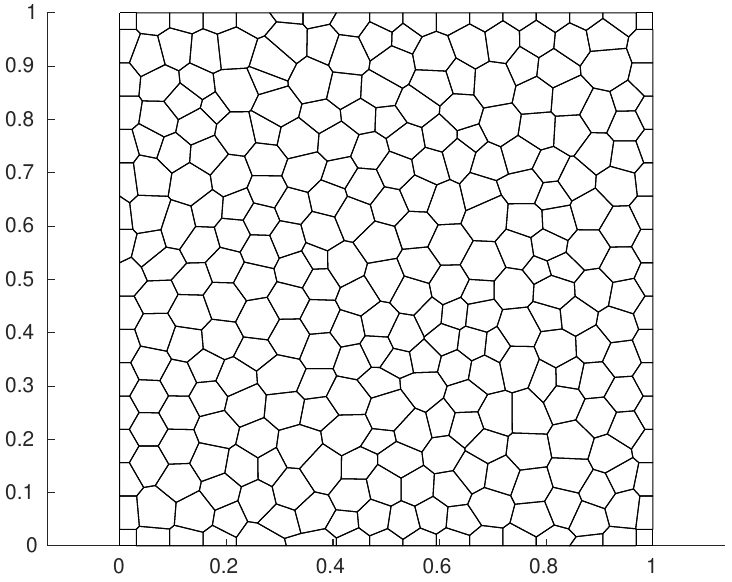}}
	\caption{Examples meshes of the unit square.}
	\label{fig:square-meshes}
\end{figure}

\begin{figure}
	\centering
	{\includegraphics[width=0.4\textwidth]{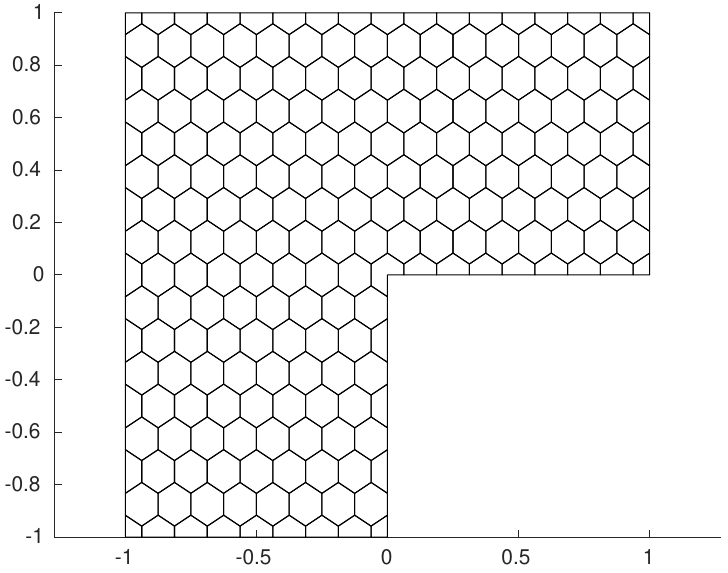}}\quad
	{\includegraphics[width=0.4\textwidth]{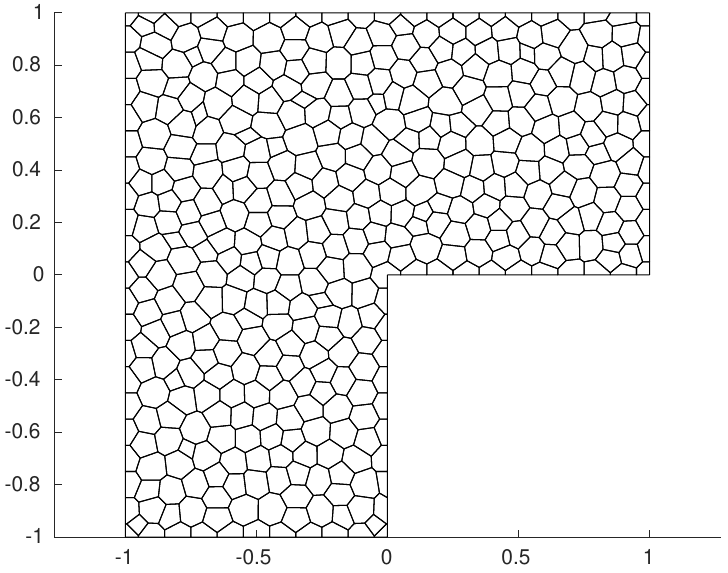}}
	\caption{Examples meshes of the L-shaped domain.}
	\label{fig:lshape-meshes}
\end{figure}

\begin{figure}
	\centering
	{\includegraphics[width=0.45\textwidth]{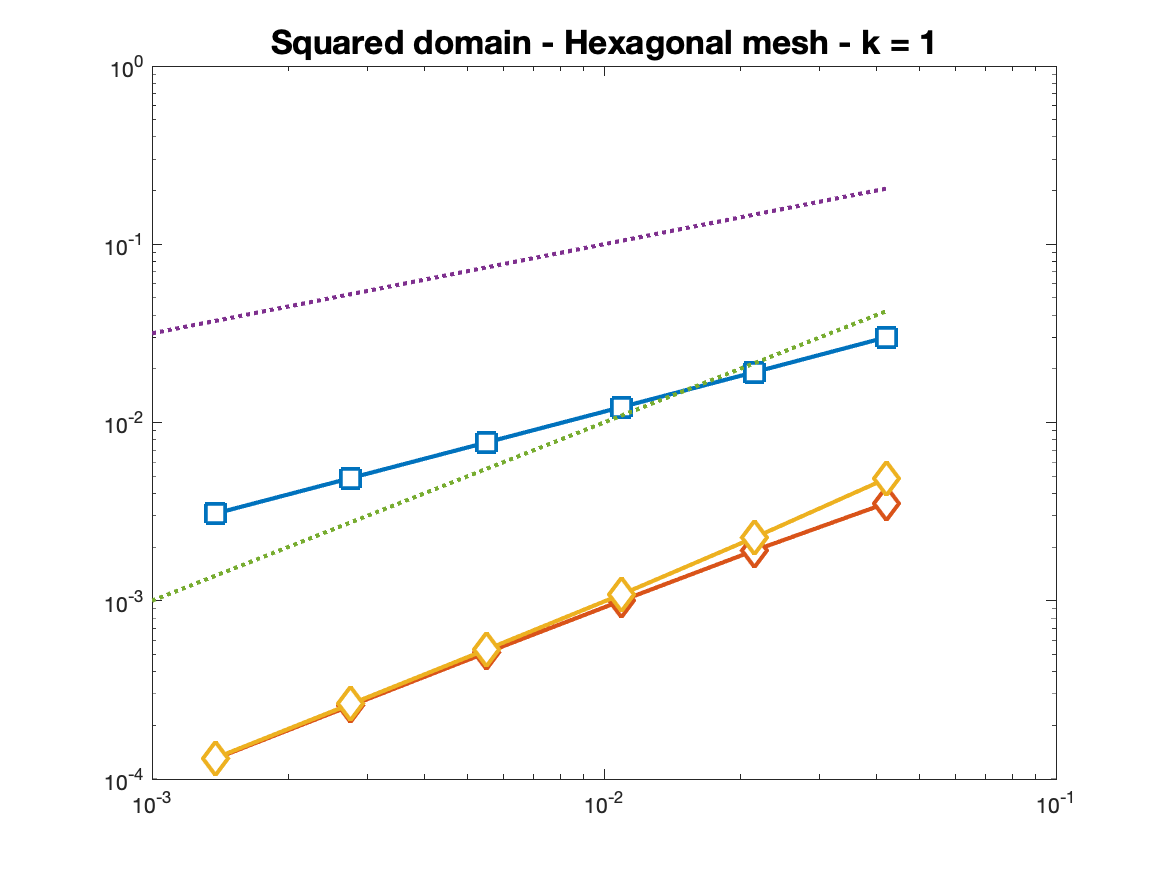}}\quad
	{\includegraphics[width=0.45\textwidth]{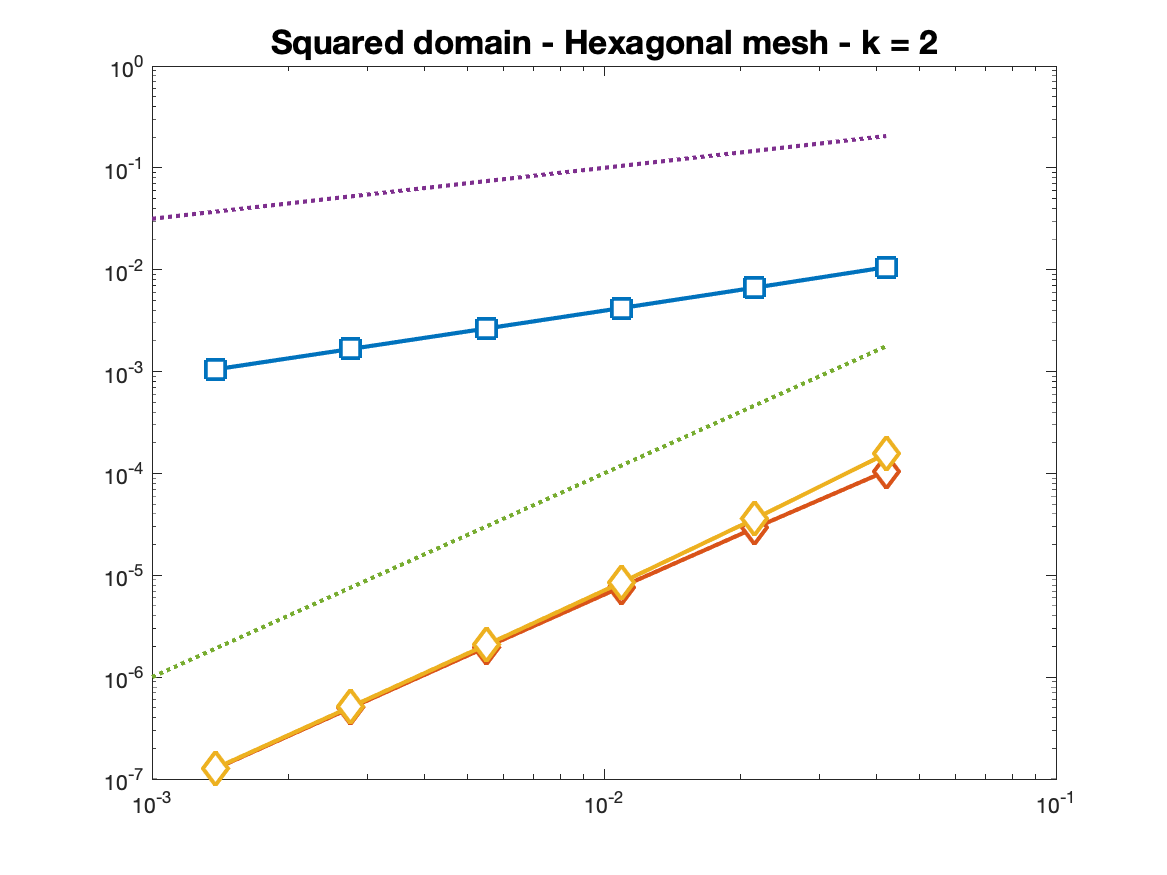}}\\
	{\includegraphics[width=0.45\textwidth]{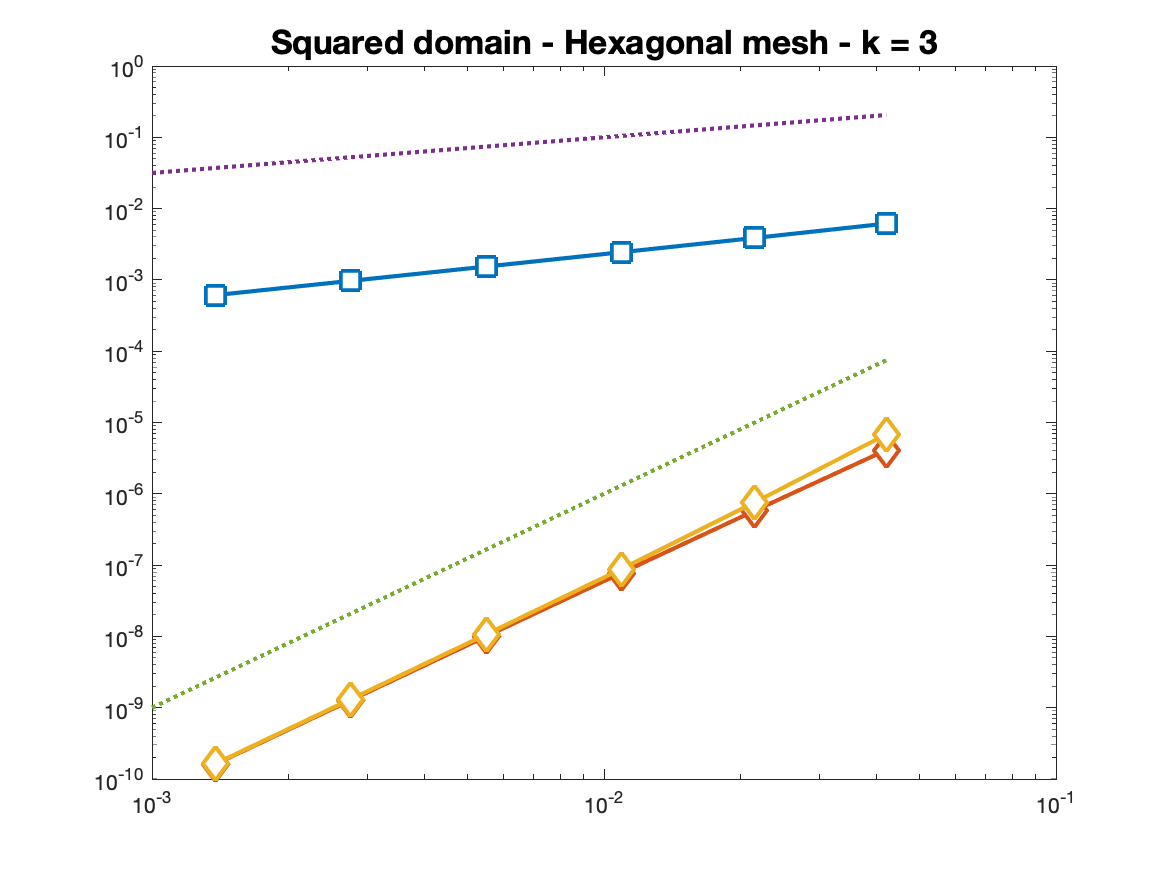}}\quad
	{\includegraphics[width=0.45\textwidth]{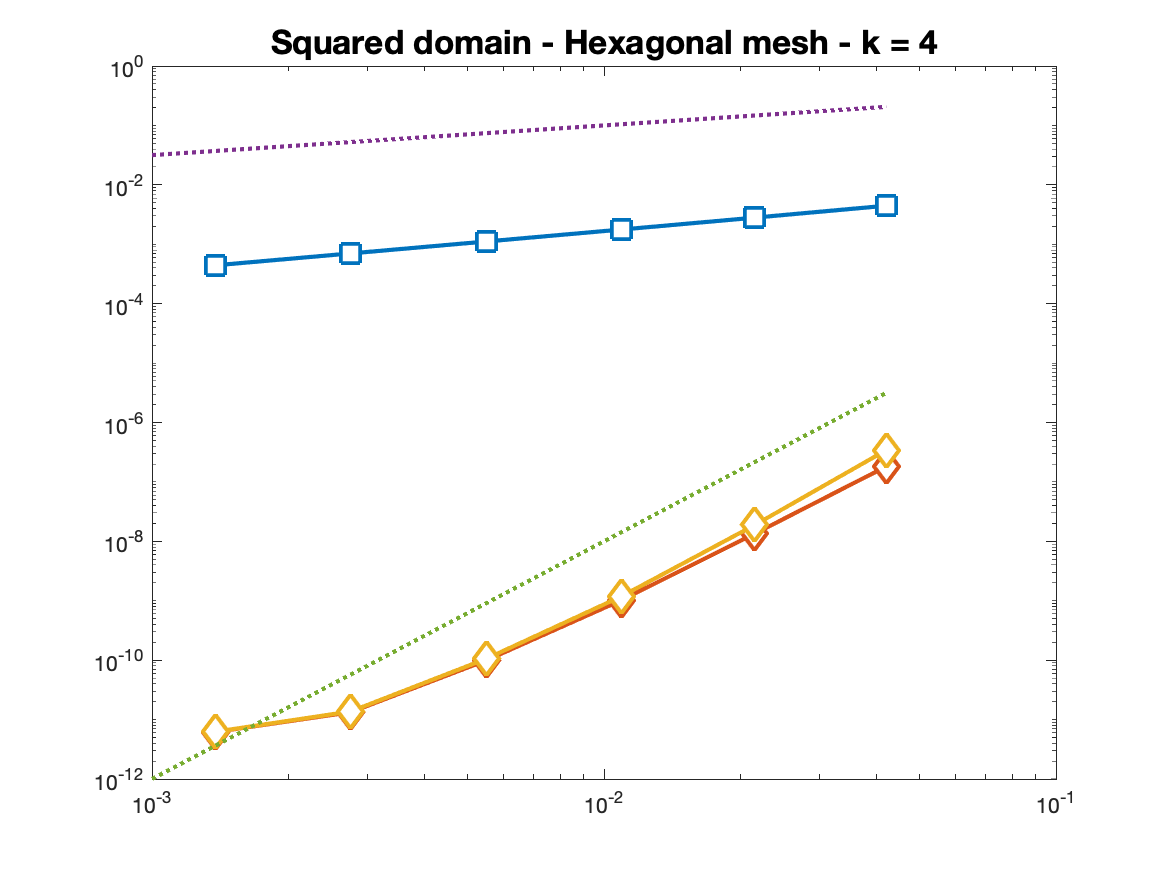}}
	\caption{
		Squared domain, hexagonal meshes. On the $x$ axis the meshsize $h$, on the $y$ axis the global (blue squares) and local (yellow/orange diamond) $H^1$ errors $e^1_\Omega$, 	$e^1_{\Omega_0^-}$ and 	$e^1_{\Omega_0^+}$. The slopes of the two reference lines are $1/2$ and $k$.
	}\label{fig:primotest}
	
\end{figure}

\begin{figure}
	\centering
	{\includegraphics[width=0.45\textwidth]{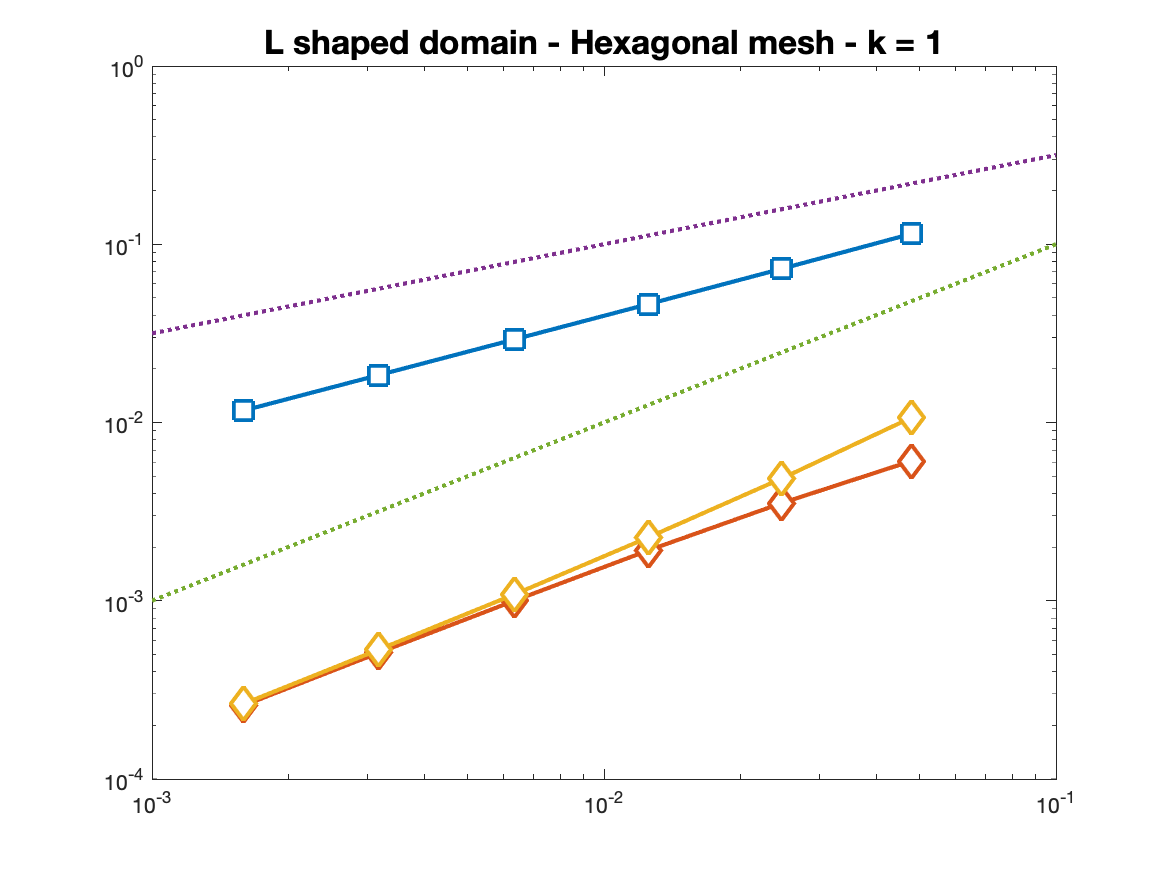}}\quad
	{\includegraphics[width=0.45\textwidth]{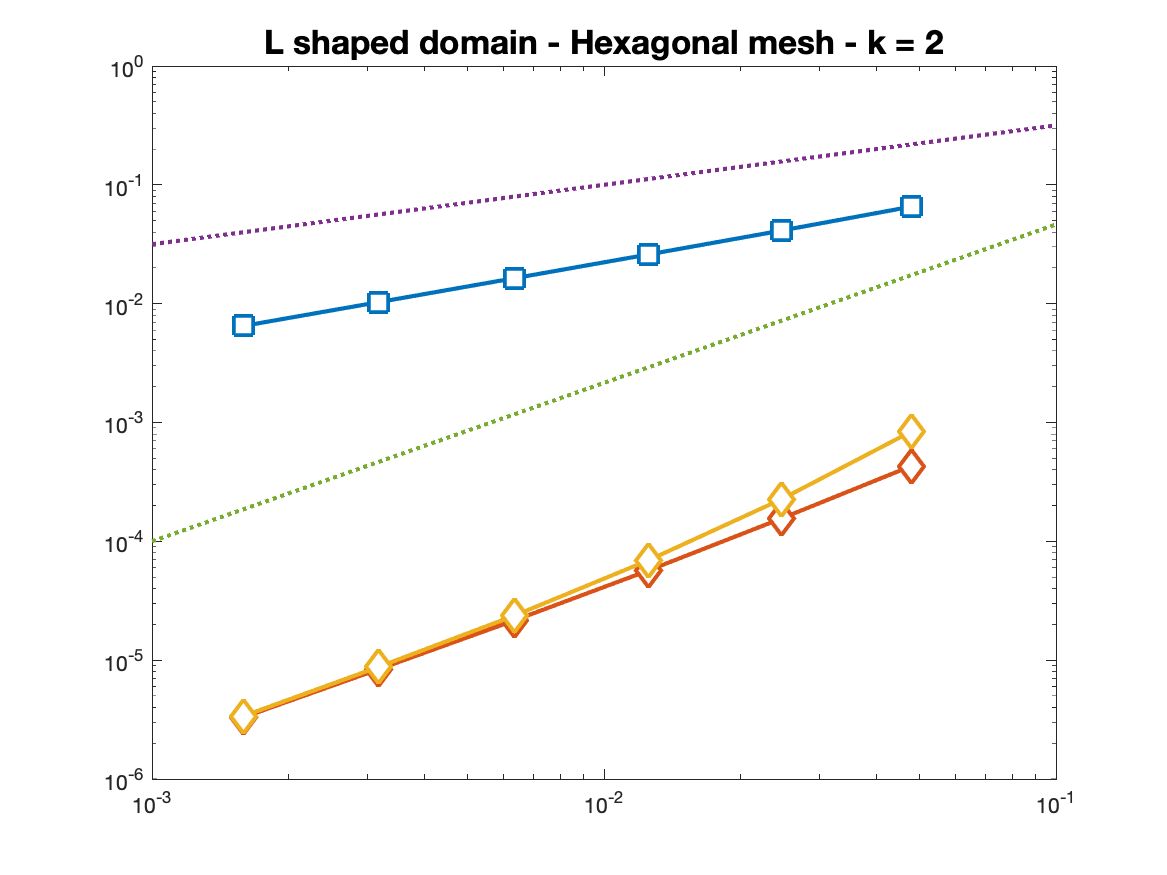}}\\
	{\includegraphics[width=0.45\textwidth]{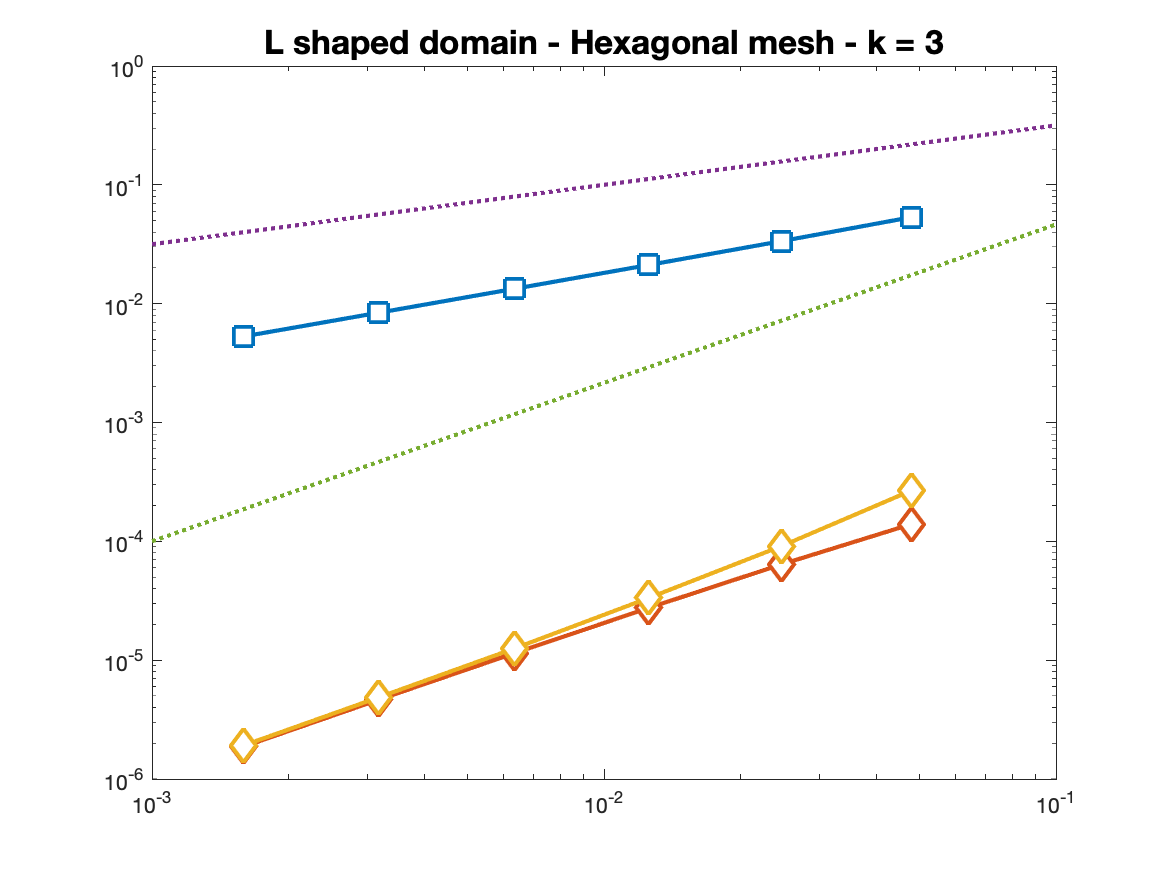}}\quad
	{\includegraphics[width=0.45\textwidth]{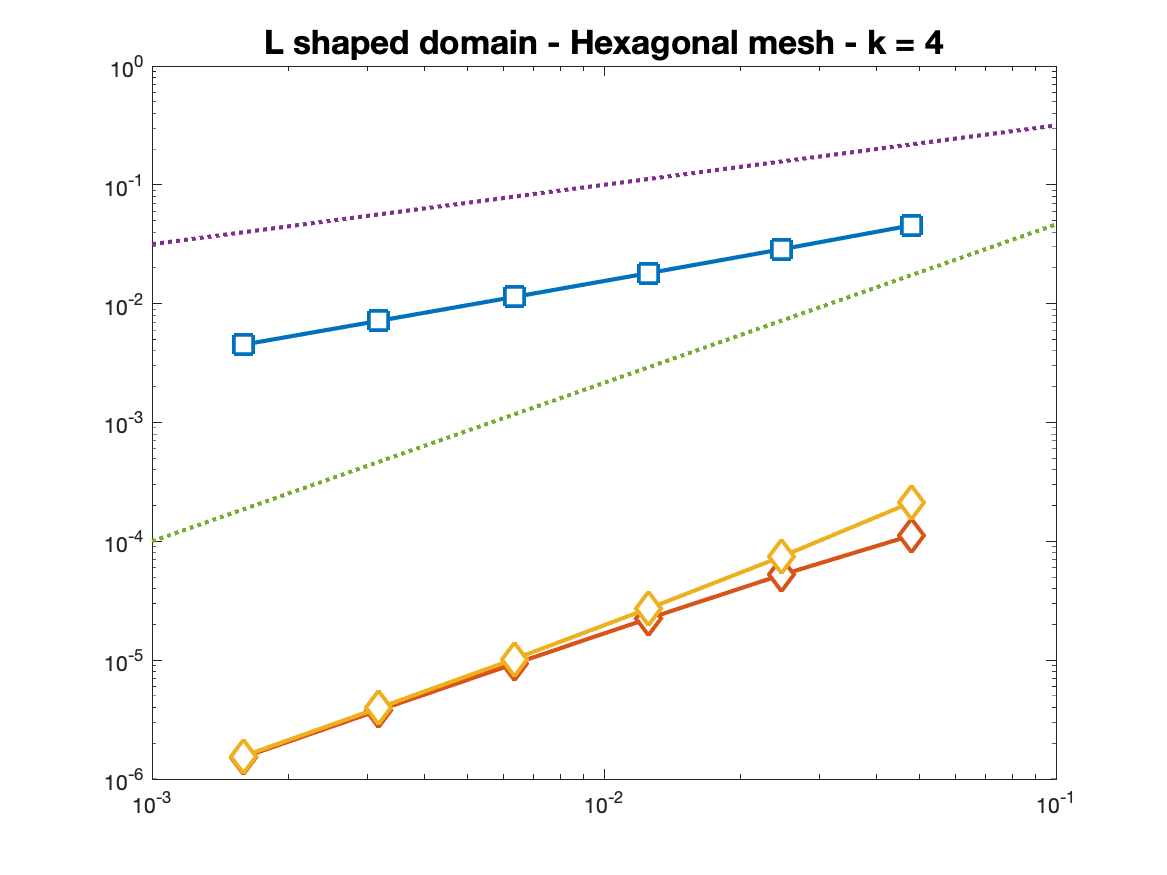}}
	\caption{
		L--shaped domain, hexagonal meshes. On the $x$ axis the meshsize $h$, on the $y$ axis the global (blue squares) and local (yellow/orange diamond) $H^1$ errors $e^1_\Omega$, 	$e^1_{\Omega_0^-}$ and 	$e^1_{\Omega_0^+}$. The slopes of the two reference lines are $1/2$ and $\min\{4/3,k\}$.
	}
\end{figure}

\begin{figure}
	\centering
	{\includegraphics[width=0.4\textwidth]{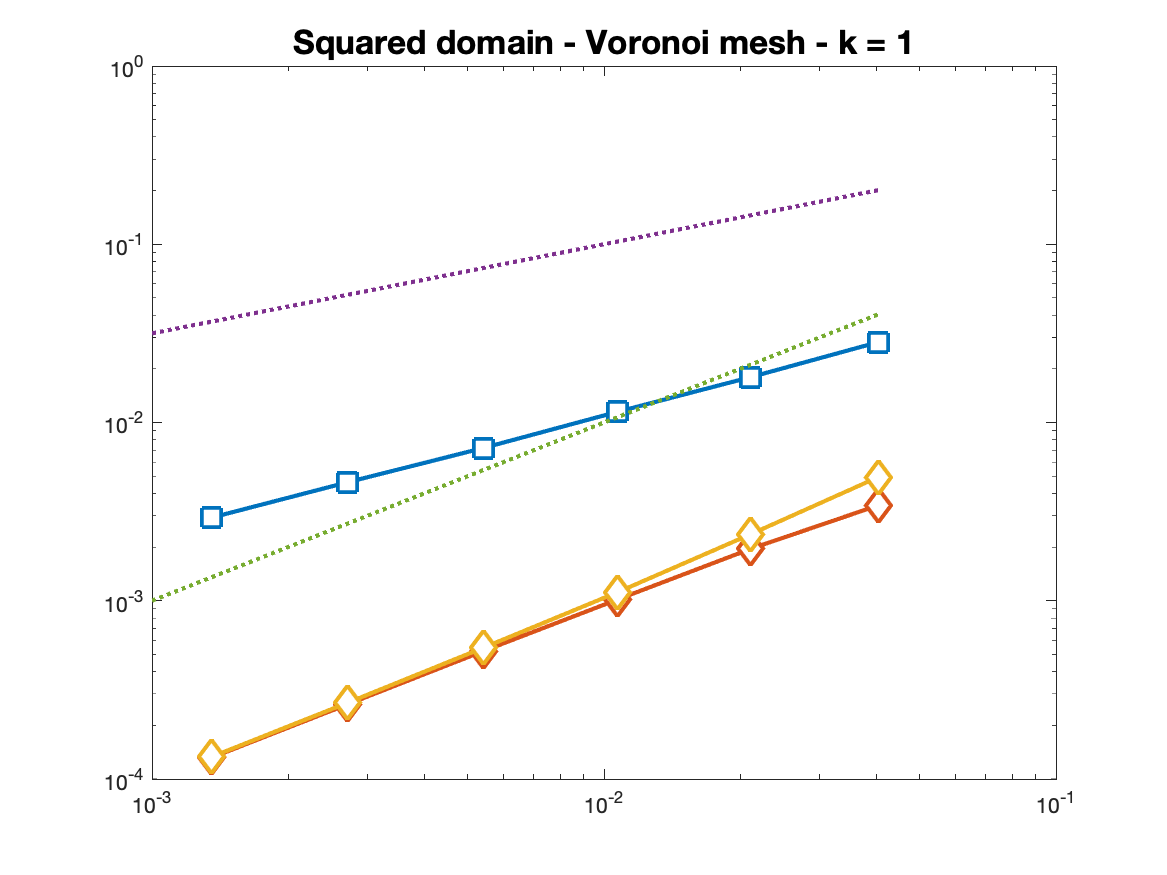}}\quad
	{\includegraphics[width=0.4\textwidth]{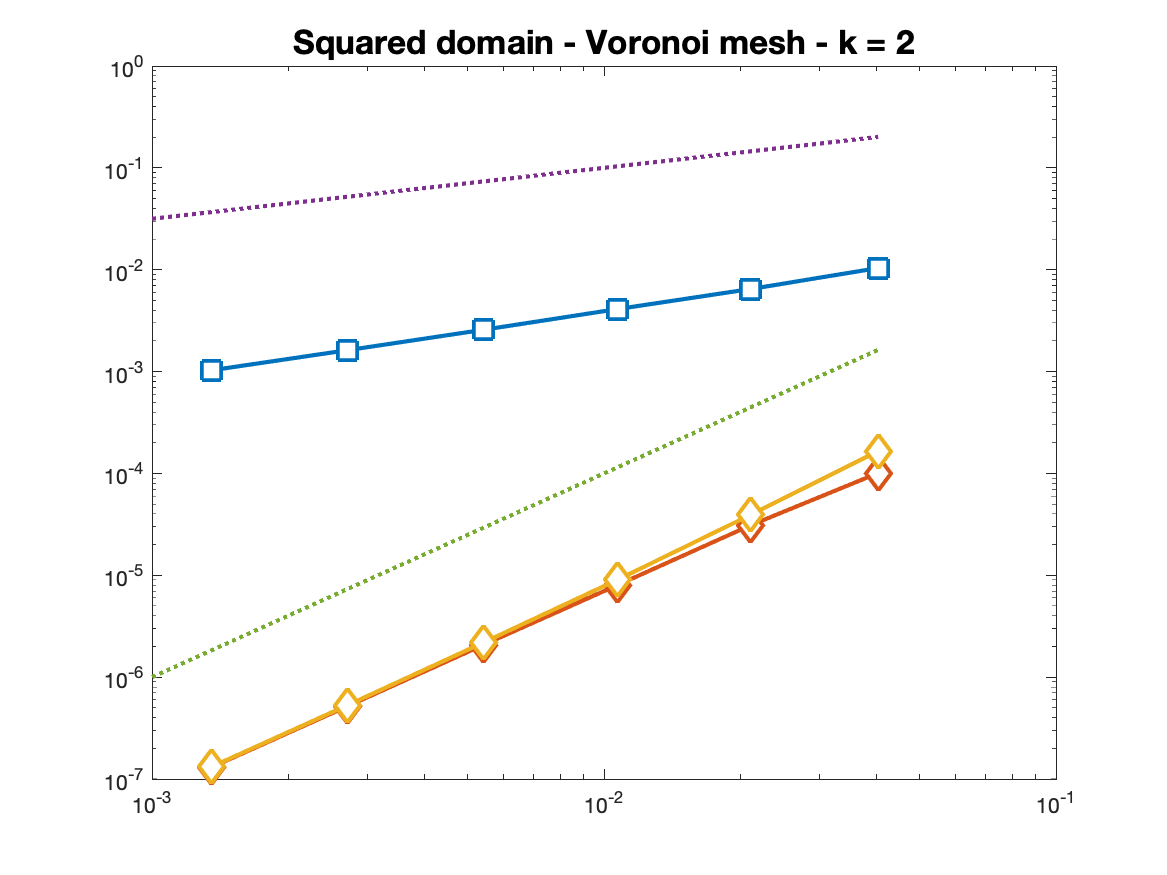}}\\
	{\includegraphics[width=0.4\textwidth]{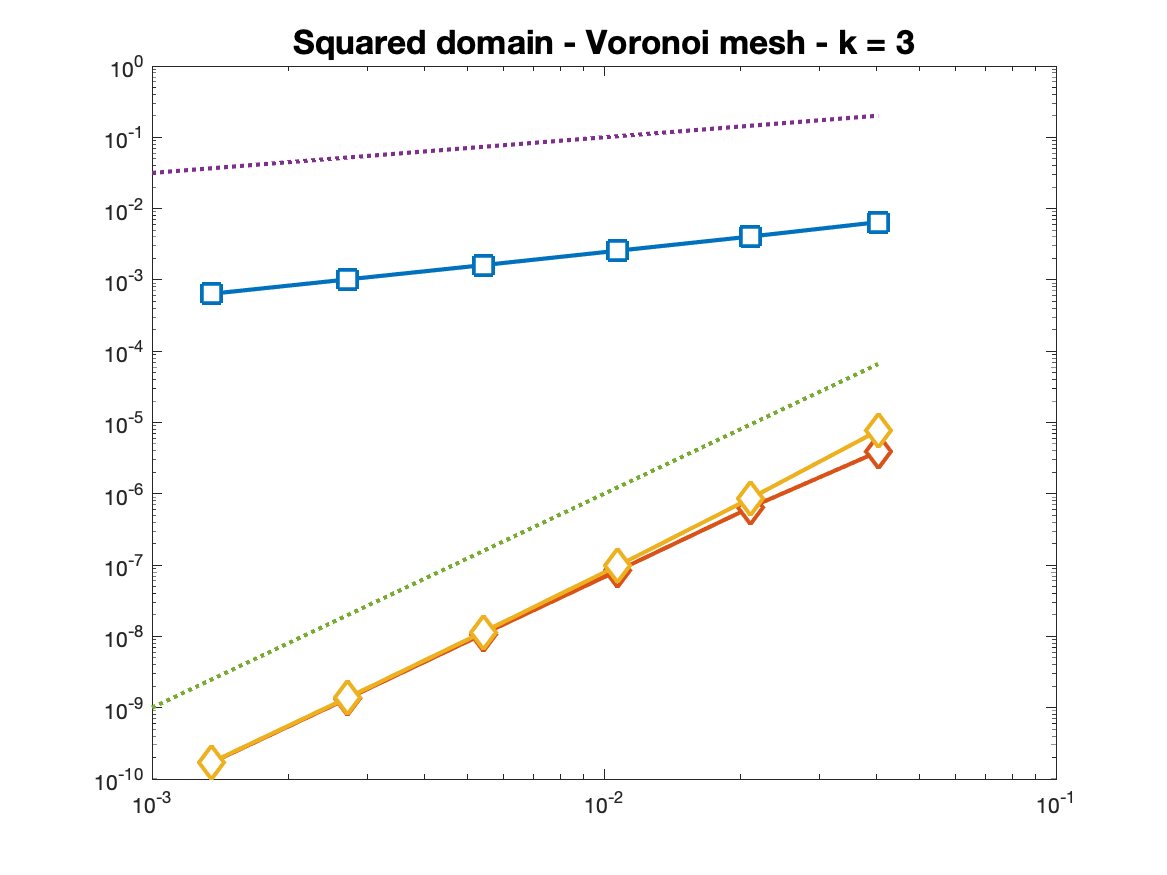}}\quad
	{\includegraphics[width=0.4\textwidth]{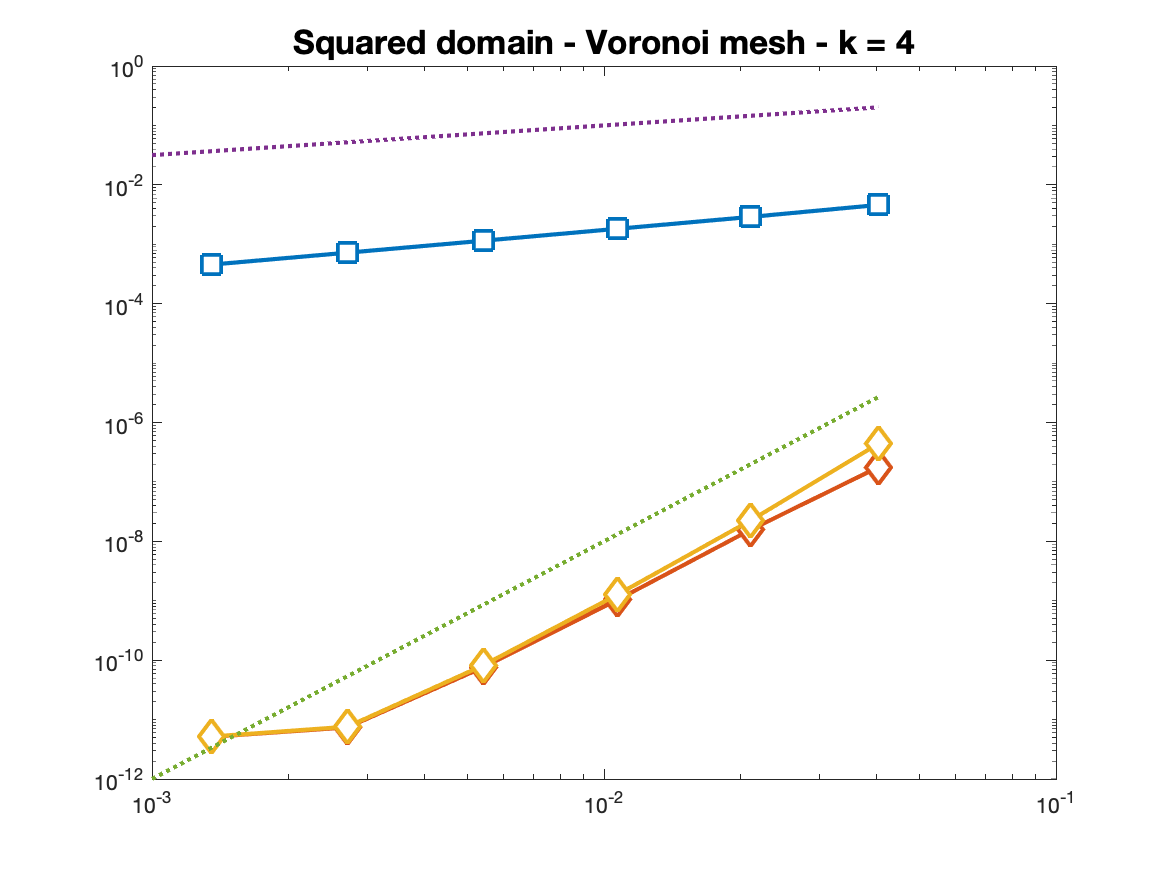}}	\caption{
		Squared domain, Voronoi meshes. On the $x$ axis the meshsize $h$, on the $y$ axis the global (blue squares) and local (yellow/orange diamond) $H^1$ errors $e^1_\Omega$, 	$e^1_{\Omega_0^-}$ and 	$e^1_{\Omega_0^+}$. The slopes of the two reference lines are $1/2$ and $k$.
	}
	
\end{figure}

\begin{figure}
	\centering
	{\includegraphics[width=0.4\textwidth]{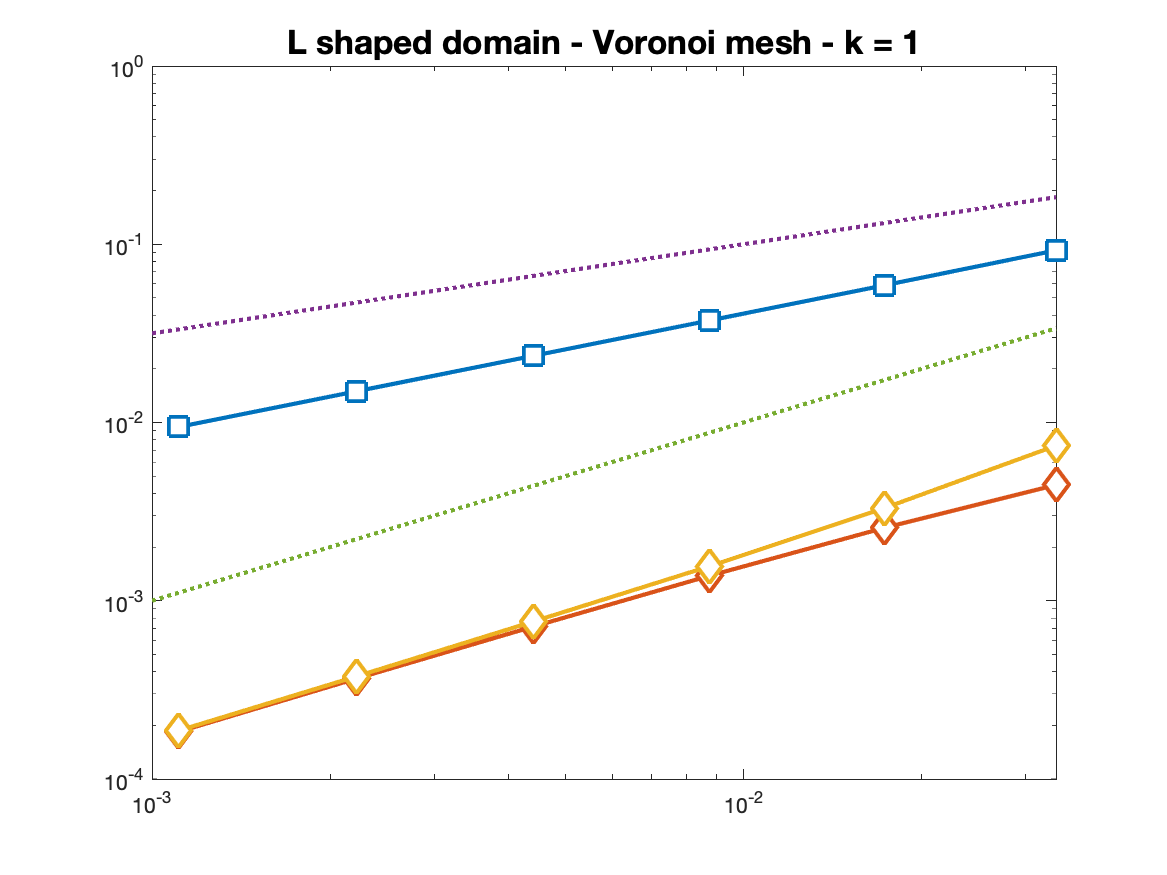}}\quad
	{\includegraphics[width=0.4\textwidth]{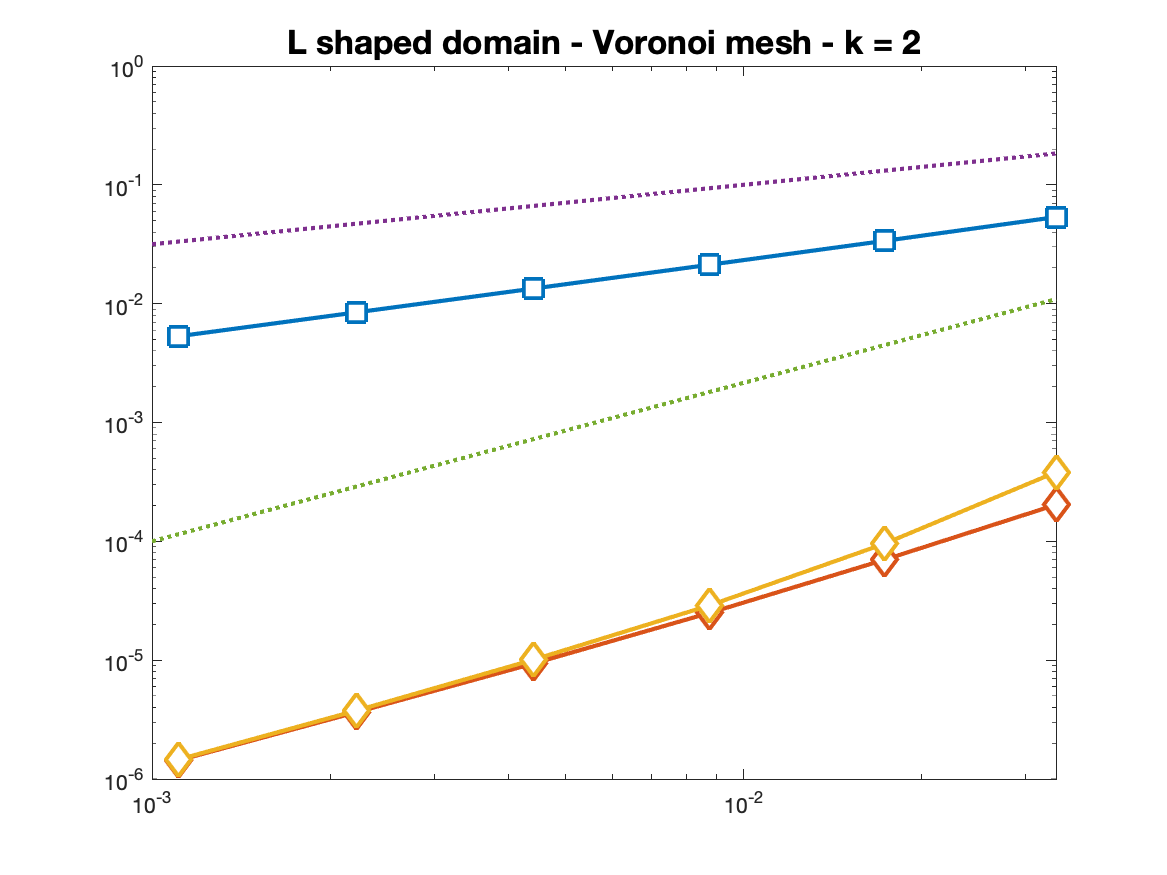}}\\
	{\includegraphics[width=0.4\textwidth]{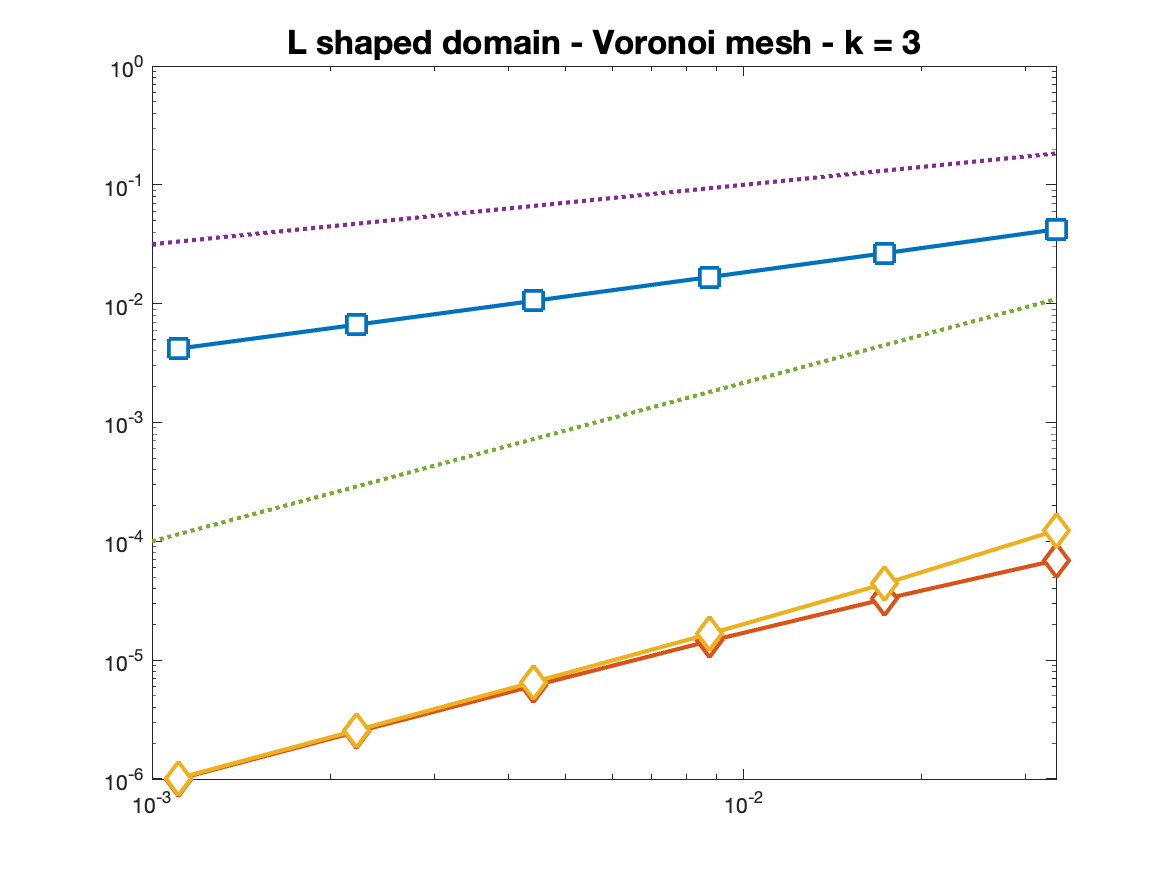}}\quad
	{\includegraphics[width=0.4\textwidth]{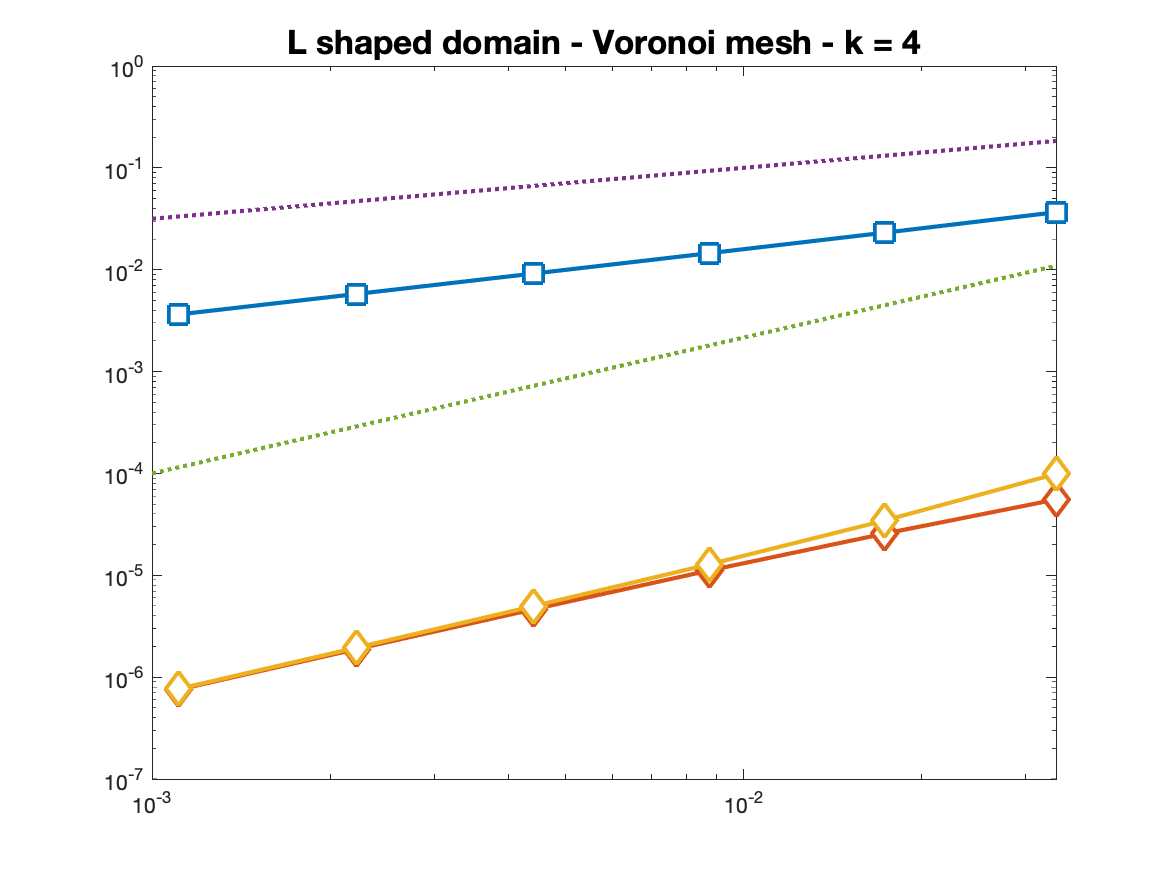}}
	\caption{
		L--shaped domain, Voronoi meshes.On the $x$ axis the meshsize $h$, on the $y$ axis the global (blue squares) and local (yellow/orange diamond) $H^1$ errors $e^1_\Omega$, 	$e^1_{\Omega_0^-}$ and 	$e^1_{\Omega_0^+}$. The slopes of the two reference lines are $1/2$ and $\min\{4/3,k\}$.
	}\label{fig:ultimotest}
\end{figure}

\newcommand{\Set}[1]{\{#1\}}
\section{Numerical tests}\label{sec:numerical}

In order to confirm the validity of the theoretical estimates we consider equation \eqref{Pb:strong} with the right hand side and Dirichlet data chosen in such a way that 
	\[
	u(x,y) = (x^2+y^2)^{1/3} \sin(2\theta/3),\qquad \theta = \tan^{-1}(y/x),
	\]
	is the solution. 	We consider two different domains, namely
	\[
	\text{(Test 1)}\quad	\Omega = (0,1) ^2, \qquad \text{(Test 2)}\quad\Omega = (-1,1)^2 \setminus (0,1)^2
	\]
	The domain $\Omega_0 \subsubset \Omega$ on which we evaluate the error is chosen as
	\[\Omega_0 = \Set{(x,y) \in\mathbb R^2 : (x-0.5)^2+(y-0.5)^2 \leq 0.25^2} \quad \text{for Test 1}\]
	and 
	\[
	\Omega_0 = \Set{(x,y) \in\mathbb R^2 : (x+0.5)^2+(y+0.5)^2 \leq 0.25^2}\quad \text{for Test 2}.
	\] 
	The solution $u$  has a singularity in $(0,0)$, and,
	for both test cases, it verifies $u \in H^s(\Omega)$, for all $s < 3/2$, but $u \not \in H^{3/2}(\Omega)$.
	Consequently, we expect the global $H^1(\Omega)$ error not to converge faster than $h^{1/2}$. On the other hand the solution is smooth in a neighborhood of $\Omega_0$. According to Corollaries \ref{cor:finalsmooth} and \ref{cor:finalpoly} we can then expect, for Test 1 and Test 2 respectively, a convergence rate of order  $k$ and  $\min\{k,4/3-\varepsilon\}$ ($\varepsilon$ arbitrarily small).

\blu{We solve the problem by the enhanced virtual element method (see Section \ref{sec:enhanced}) with $k = 1, \cdots, 4$.} For both test cases we consider both a sequence of progressively fines structured hexagonal meshes and a sequence of progressively finer regular Voronoi meshes. Examples of the meshes used for the numerical tests are displayed in Figures \ref{fig:square-meshes} and \ref{fig:lshape-meshes}.  For all discretizations the stabilization is chosen to be the simple so called dofi--dofi stabilization, that, under our mesh regularity assumptions, is optimal.

In Figures \ref{fig:primotest} through \ref{fig:ultimotest} we plot, in logarithmic scale, the convergence history for the two test cases and the two sequences of meshes. For $k = 1$ through $4$ we plot the global error (square markers) as well as the local error (diamond markers). In order to avoid the need of evaluating  integrals over a curved domain, rather than displaying the actual value of the local error, we display upper and lower approximations obtained by evaluating the errors in subdomains $\Omega_0^- \subset \Omega_0 \subset \Omega_0^+$ defined as 
\[
 \Omega_0^- = \cup_{K \in \Tess^-} \bar K, \qquad \Omega_0^+ = \cup_{K \in \Tess^+} \bar K
\]
with
\[
\Tess^- = \{ K \in \Tess: K\subseteq \Omega_0\} \quad \text{ and }\quad  \Tess^+ = \{ K \in \Tess: K\cap \Omega_0\not = \emptyset\}. 
\]

We then set 
	\begin{gather}
		e^1_{\Omega} = (||u - {\Pi^0 _k} u_h||^2_{0,\Omega} + ||\nabla u - {\Pi^0 _{k-1}}\nabla u_h||^2_{\Omega})^{1/2},\\
	e^1_{\Omega_0^-} = (||u - {\Pi^0 _k} u_h||^2_{0,\Omega_0^-} + ||\nabla u - {\Pi^0 _{k-1}}\nabla u_h||^2_{\Omega_0^-})^{1/2},\\
		e^1_{\Omega_0^+} = (||u - {\Pi^0 _k} u_h||^2_{0,\Omega_0^+} + ||\nabla u - {\Pi^0 _{k-1}}\nabla u_h||^2_{\Omega_0^+})^{1/2}.
		\end{gather}
 In all figures, we display, for reference purpose, dotted straight lines with a slope corresponding to the expected convergence rate for global and local error, namely $1/2$ for the global error and, respectively, $k$ and $\min\{4/3,k\}$ for the local error in the squared and in the L--shaped domain.

Figures \ref{fig:primotest} through \ref{fig:ultimotest} clearly confirm the validity of the a priori estimate. In particular the local error behaves as expected both in the squared domain case and in the L--shaped domain case.

\bibliographystyle{amsplain}

\bibliography{localEstimates}

\end{document}